\theoremstyle{plain}
\newtheorem{theorem}{Theorem}
\newtheorem*{theorem*}{Theorem}
\newtheorem{lemma}[theorem]{Lemma}
\newtheorem{corollary}[theorem]{Corollary}
\newtheorem{proposition}[theorem]{Proposition}
\newtheorem{conjecture}[theorem]{Conjecture}
\theoremstyle{definition}
\newtheorem{definition}[theorem]{Definition}
\newtheorem{notation}[theorem]{Notation}
\numberwithin{equation}{section}
\newcommand{\eps}{\varepsilon}
\newcommand{\id}{\mathrm{id}}
\newcommand{\Z}{\mathcal Z}
\newcommand{\Cu}{\mathrm{Cu}}
\newcommand{\CU}{\mathbf{Cu}}
\newcommand{\supp}{\mathrm{Supp}}
\newcommand{\dimnuc}{\dim_{\mathrm{nuc}}}
\newcommand{\dr}{\mathrm{dr}}
\newcommand{\K}{\mathcal K}
\newcommand{\Q}{\mathcal Q}
\title{Nuclear dimension and $\Z$-stability}
\author{Yasuhiko Sato}
\address{\hskip-\parindent
Yasuhiko Sato, Graduate School of Science, Kyoto University, Sakyo-ku, Kyoto 606-8502, Japan.}
\email{ysato@math.kyoto-u.ac.jp}
\author{Stuart White}
\address{\hskip-\parindent
Stuart White, School of Mathematics and Statistics, University of Glasgow, 
University Gardens, Glasgow Q12 8QW, Scotland.}
\email{stuart.white@glasgow.ac.uk}
\author{Wilhelm Winter}
\address{\hskip-\parindent
Wilhelm Winter, Mathematisches Institut der WWU M\"unster, Einsteinstra\ss{}e 62, 48149 M\"unster, Germany.}
\email{wwinter@uni-muenster.de}
\thanks{Research partially supported by JSPS (the Grant-in-Aid for Research Activity Start-up 25887031), by EPSRC (grant no. I019227/1-2), and by the DFG (SFB 878).}
\date{\today}
\begin{document}
\begin{abstract}
Simple, separable, unital, monotracial and nuclear $\mathrm{C}^*$-algebras are shown to have finite nuclear dimension whenever they absorb the Jiang-Su algebra $\mathcal{Z}$ tensorially.  This completes the proof of the Toms-Winter conjecture in the unique trace case.
\end{abstract}

\maketitle

\renewcommand*{\thetheorem}{\Alph{theorem}}

\noindent
The structure theory of simple nuclear $\mathrm{C}^*$-algebras is currently undergoing revolutionary progress, driven by the discovery 
of regularity properties of various flavours: topological, functional analytic and algebraic. Despite the diverse nature of these regularity properties, they are all satisfied by those classes of $\mathrm{C}^*$-algebras which have been successfully classified by $K$-theoretic data, and they all fail spectacularly for the ``exotic'' algebras  in \cite{R:Acta,T:Ann} which provide counterexamples to Elliott's classification conjecture.  The observation that there are deep connections between these disparate properties was crystallised in the following conjecture of Toms and the third named author.

\begin{conjecture}\label{CA}
Let $A$ be a simple, separable, unital, nuclear and non-elementary $\mathrm{C}^*$-algebra.  Then the following statements are equivalent:
\begin{enumerate}[(i)]
\item $A$ has finite nuclear dimension;
\item $A$ absorbs the Jiang-Su algebra $\Z$ tensorially ($A$ is $\Z$-stable);
\item $A$ has strict comparison.
\end{enumerate}
\end{conjecture}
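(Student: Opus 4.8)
The plan is to establish the equivalence by closing the loop of implications (i)$\Rightarrow$(ii), (ii)$\Rightarrow$(iii), (iii)$\Rightarrow$(ii), (ii)$\Rightarrow$(i), treating the first two as the established, softer directions and reserving the bulk of the work for (iii)$\Rightarrow$(ii) and (ii)$\Rightarrow$(i). For (ii)$\Rightarrow$(iii) I would invoke the fact that $\Z$-stability forces the Cuntz semigroup $\Cu(A)$ to be almost unperforated; since $A$ is exact, almost unperforation is equivalent to strict comparison of positive elements by traces, so this direction is purely Cuntz-semigroup theory and needs no central-sequence input. For (i)$\Rightarrow$(ii) the plan is to run the dimension-drop argument: finite $\dimnuc A$ yields, via order-zero coloured approximations of $\id_A$, enough approximately central and approximately multiplicative maps from matrix algebras into $A$ to verify a $\Z$-absorption criterion in the central sequence algebra $A_\omega\cap A'$.

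The heart of the matter is (iii)$\Rightarrow$(ii). Here I would pass to the uniform tracial completion and the central sequence algebra $F(A)=(A_\omega\cap A')/\mathrm{Ann}(A)$, and aim to produce a unital embedding of a dimension-drop algebra — equivalently a suitable approximately central order-zero map from a matrix algebra together with a commuting Cuntz path — which by the Jiang--Su criterion of Kirchberg and R\o rdam gives $A\otimes\Z\cong A$. The two technical pillars are: nuclearity, used through the completely positive approximation property and the quasidiagonality of the trace-kernel extension, which supplies excision of traces and the existence of the requisite order-zero maps; and strict comparison, which is to be converted into \emph{property (SI)}, a comparison principle for central sequences that allows one to absorb a small error projection into a larger one. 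Combining excision with property (SI) inside $F(A)$ then yields the commuting pair needed for $\Z$-absorption.

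The closing implication (ii)$\Rightarrow$(i) is the content of the present paper in the monotracial case: one builds completely positive contractive approximations of $\id_A$ that factor, with a bounded number of colours, through finite-dimensional algebras, exploiting $A\cong A\otimes\Z$ to manufacture approximately central partitions of unity and using quasidiagonality of the trace-kernel extension to upgrade uniformly $2$-norm-small errors to genuinely small errors. To extend this to arbitrary trace simplices I would replace the single trace by the whole tracial state space, so that the finite-dimensional approximations must be assembled coherently across all extreme traces; this requires the local estimates to be glued by a partition of unity subordinate to a cover of the trace space by sets on which the approximation is uniformly controlled.

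The main obstacle is precisely this last gluing, together with the analogous trace-space uniformity in property (SI). For a single trace everything is pointwise, but for a non-compact extremal boundary one cannot simply integrate pointwise order-zero maps into a single approximation, since uniform $2$-norm smallness over all traces is strictly stronger than smallness at each individual trace. Overcoming this needs a genuinely new structural input — a \emph{uniform property $\Gamma$} for the central sequence algebra, giving a decomposition of $F(A)$ into approximately central orthogonal pieces from which one builds \emph{complemented partitions of unity} indexed over the trace simplex. Deriving uniform property $\Gamma$ from $\Z$-stability, and showing that it suffices to push the monotracial coloured approximations through uniformly in the trace, is the step I expect to be decisive and by far the hardest.
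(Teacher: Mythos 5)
Your proposal is not, and cannot be, a proof of the statement as given: the statement is Conjecture~\ref{CA}, which the paper itself leaves open, proving it only under the additional hypothesis that $A$ has at most one tracial state. The paper's actual contribution is Theorem~\ref{Main} ((ii)$\Rightarrow$(i) for simple, separable, unital, nuclear, monotracial $A$, with $\dimnuc(A)\leq 3$), which is combined with the cited implications (i)$\Rightarrow$(ii) of Winter, (ii)$\Rightarrow$(iii) of R\o{}rdam, and (iii)$\Rightarrow$(ii) of Matui--Sato in the unique trace case. Your outline correctly assembles exactly these known implications, but for general trace simplices it is a research program rather than an argument: you yourself name the decisive unproven steps --- deriving a uniform property $\Gamma$ from $\Z$-stability, constructing complemented partitions of unity indexed over the trace simplex, and pushing the monotracial coloured approximations through uniformly in the trace --- and leave them entirely open. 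The same gap afflicts your (iii)$\Rightarrow$(ii): the property (SI) mechanism you sketch is the Matui--Sato argument, which at the paper's level of generality is available only for a unique trace (or, per the extensions cited in the introduction, for trace simplices with suitably restricted extreme boundary); for arbitrary trace simplices nothing in your outline closes that implication, and it is precisely there that the conjecture remains a conjecture. So the honest verdict is: correct map of the known terrain, genuine gap at the two implications that constitute the open problem.

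One further inaccuracy worth flagging is your description of the monotracial (ii)$\Rightarrow$(i), which misattributes the paper's mechanism. The whole point of this paper is to \emph{avoid} any quasidiagonality input: in place of ``quasidiagonality of the trace-kernel extension'' it uses Connes' theorem together with the Kirchberg--R\o{}rdam $\sigma$-ideal property of the trace-kernel ideal to manufacture trace-preserving c.p.c.\ order zero maps $A\rightarrow U_\omega$ which are unital modulo that ideal (Proposition~\ref{EmbeddingCone}, recovering Voiculescu's quasidiagonality of cones as a byproduct rather than assuming it); these are then combined with the $2$-coloured approximately inner flip extracted from Matui--Sato (Lemma~\ref{FlipLemma}) and a Cuntz-semigroup uniqueness theorem for positive contractions of full spectrum in $(\Z\otimes\Z)_\omega$ (Lemma~\ref{ShuffleLemma}, via Ciuperca--Elliott and Robert) to produce the two order zero maps landing back in $A\otimes\Z$, the four colours arising as $2\times 2$. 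The device you describe --- upgrading uniformly $2$-norm-small errors to norm-small errors via quasidiagonality --- belongs to the earlier Matui--Sato decomposition rank argument and to later developments, not to this paper, whose main theorem is specifically designed to dispense with that hypothesis.
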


For now we only give a rough idea of what these notions mean; we will discuss them in greater detail in Section \ref{regularity-section}. 

Nuclear dimension is the $\mathrm{C}^{*}$-algebraic analogue of Lebesgue covering dimension as introduced in \cite{WZ:Adv}. This should be thought of as a topological property, phrased in terms of approximating $A$ via noncommutative partitions of unity; the numerical value of dimension enters as a colouring number for the latter. The notion is closely related to decomposition rank, its precursor from \cite{KW:IJM}. Having finite decomposition rank is a stronger condition than (i) which implies  quasidiagonality, and so (in contrast to nuclear dimension) is only applicable to stably finite algebras. 

A $\mathrm{C}^{*}$-algebra $A$ is $\mathcal{Z}$-stable if $A \cong A \otimes \mathcal{Z}$, where $\mathcal{Z}$ denotes the Jiang-Su algebra (which is nuclear, so there is no need to specify the tensor product). Jiang and Su constructed $\mathcal{Z}$ in \cite{JS:AJM} as an inductive limit of so-called dimension drop algebras (i.e.\ certain bundles of matrices over the closed unit interval). Amongst all such inductive limits, they characterised it as the unique one which is simple and monotracial. The classification machinery for nuclear $\mathrm{C}^{*}$-algebras allows for a much more general statement: $\mathcal{Z}$ is the uniquely determined infinite dimensional, simple and monotracial $\mathrm{C}^{*}$-algebra which has finite decomposition rank and is $KK$-equivalent to the complex numbers; cf.\ \cite[Corollary~5.5]{W:Invent1}. Here, $KK$-equivalence (in the sense of Kasparov) may be interpreted as a weak notion of homotopy equivalence. It implies in particular that $\mathcal{Z}$ and $\mathbb{C}$ have isomorphic ordered $K$-groups; in fact, in some sense $\mathcal{Z}$ plays the role of an infinite dimensional version of $\mathbb{C}$. Thanks to \cite{MS:Preprint}, one can also replace the hypothesis of finite decomposition rank in the statement above by finite nuclear dimension combined with quasidiagonality. The  Jiang--Su algebra is strongly self-absorbing in the sense of \cite{TW:TAMS}, i.e., there is an isomorphism $\mathcal{Z} \cong \mathcal{Z} \otimes \mathcal{Z}$ which is approximately unitarily equivalent to the first factor embedding. This is perhaps the most crucial feature of $\mathcal{Z}$, since it provides the link to celebrated results of Connes on injective II$_{1}$ factors and of Kirchberg on purely infinite nuclear $\mathrm{C}^{*}$-algebras (we will return to this point of view below). In \cite{W:JNCG}, $\mathcal{Z}$ was characterised in an entirely abstract manner as the initial object in the category of strongly self-absorbing $\mathrm{C}^{*}$-algebras. Being $\mathcal{Z}$-stable has a functional analytic flavour, which becomes particularly clear when characterising $\mathcal{Z}$-stability in terms of central sequence algebras; cf.\ Proposition~\ref{NewProp1.2}. 

Strict comparison means that positive elements (or rather their support projections) may be compared in the sense of Murray-von Neumann by looking at their values on traces. This can be rephrased in a much more algebraic manner in terms of a certain notion of order-completeness of a homological invariant (the so-called Cuntz semigroup). It is the different nature of these three regularity properties -- topological, functional analytic, and algebraic -- which makes the conjecture so useful, since it sheds light on the same phenomenon from completely different angles. Moreover, and quite strikingly, versions of these properties and their interplay also appear in other contexts, for example in von Neumann algebras and in topological dynamics. 

\bigskip

Up to now the Toms-Winter conjecture has been verified for various naturally occurring classes of algebras: in particular many $\mathrm{C}^*$-algebras of the form $C(X)\rtimes\mathbb Z$ arising from minimal homeomorphisms of compact metric spaces satisfy the conjecture. In the uniquely ergodic case Elliott and Niu have shown that regularity is automatic \cite{EN:meandim} (a fact related to unique ergodicity implying mean dimension zero -- regularity can fail for larger trace spaces). This leads to classification by $K$-theory (see \cite{TomsWinter:PNAS}, and \cite{Sz:Preprint, Win:class-products} for the case of $\mathbb{Z}^{d}$ actions). Moreover, the implications (i)$\Rightarrow$(ii) and (ii)$\Rightarrow$(iii) have been established in general by the third named author \cite{W:Invent1,W:Invent2} (see \cite{T:MA} for an extension to the stably projectionless case) and R\o{}rdam \cite{R:IJM} respectively; recent progress of Matui and the first named author \cite{MS:Acta} establishes (iii)$\Rightarrow$(ii) for $\mathrm{C}^*$-algebras with a unique tracial state (this has subsequently been extended to somewhat more general trace simplices in \cite{KR:Crelle,S:Preprint2,TWW:Preprint}). In the traceless case, (iii)$\Rightarrow$(ii) boils down to Kirchberg's celebrated $\mathcal{O}_{\infty}$-stability theorem for purely infinite nuclear $\mathrm{C}^{*}$-algebras, \cite{Kir:ICM}; (ii)$\Rightarrow$(i) follows from Kirchberg-Phillips classification combined with \cite{WZ:Adv} in the presence of the universal coefficient theorem (UCT), and was shown in general in \cite{MS:Preprint} (see also \cite{BEMSW:Z-2}).

The focus of this paper is the implication from (ii) to (i) in the tracial case. Classification results provide one route towards the implication (ii)$\Rightarrow$(i), via the strategy of comparing an algebra with a directly constructed model which has finite topological dimension. As well as passing through vast amounts of technical machinery, this approach is inevitably restricted to $\mathrm{C}^*$-algebras satisfying the still mysterious UCT.  A direct approach was pioneered by Tikuisis and the third named author in \cite{TW:APDE} (heavily based on the results of Kirchberg and R{\o}rdam in \cite{KirRor:pi3}), which proves that $\Z$-stable locally homogeneous algebras have finite decomposition rank (even outside the simple setting); however this too relies in an essential way on the existence of a concrete inductive limit structure.  A recent breakthrough was achieved by Matui and the first named author in \cite{MS:Preprint}, which, in the presence of a unique tracial state, establishes finite decomposition rank from quasdiagonality and $\Z$-stability.   Our main theorem, stated below, does not require a quasidiagonality assumption (which can often be hard to verify outside a concrete inductive limit setting) to establish finite nuclear dimension from $\mathcal{Z}$-stability.

\begin{theorem}\label{Main}
Let $A$ be a simple, separable, unital, nuclear and $\Z$-stable $\mathrm{C}^*$-algebra with a unique tracial state.  Then $\dimnuc(A)\leq 3$.
\end{theorem}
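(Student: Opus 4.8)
The plan is to verify the estimate directly from the definition of nuclear dimension: for every finite $\F \subseteq A$ and every $\eps > 0$ I must produce a finite-dimensional $C^*$-algebra $E = E^{(0)} \oplus E^{(1)} \oplus E^{(2)} \oplus E^{(3)}$, a contractive completely positive map $\psi \colon A \to E$, and a completely positive map $\varphi \colon E \to A$ whose restriction to each summand $E^{(i)}$ is completely positive contractive order zero, with $\|\varphi\psi(a) - a\| < \eps$ for all $a \in \F$. The strategy is to split the construction into a von Neumann algebraic part, carried out in the weak closure of $A$ under its unique trace, and a $C^*$-algebraic correction that uses $\Z$-stability to pass from trace-norm estimates back to operator-norm estimates.

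First I would pass to the tracial completion. Since $A$ is nuclear, separable, non-elementary (being $\Z$-stable), and has a unique trace $\tau$, the weak closure $M = \pi_\tau(A)''$ is the hyperfinite II$_1$ factor by Connes' theorem. Working in the uniform $2$-norm $\|a\|_2 = \tau(a^*a)^{1/2}$, hyperfiniteness together with nuclearity yields, for any finite set and tolerance, an approximation $A \to F \to M$ through a finite-dimensional algebra in which the back map is a single $*$-homomorphism (hence order zero) and $\varphi\psi(a)$ is $2$-norm close to $a$. This is morally a very low-colour approximation, but only in the trace norm.

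The heart of the argument is to upgrade this to an operator-norm approximation at the cost of only boundedly many additional order-zero colours. I would organise the work in the central sequence algebra $A^\omega \cap A'$ and the trace-kernel ideal $J = \{x \in A^\omega : \tau_\omega(x^*x) = 0\}$, which is exactly the ideal of elements negligible in $2$-norm. Lifting the finite-dimensional order-zero back map along the quotient by $J$ (using projectivity of cones over finite-dimensional algebras) gives an honest order-zero map into $A^\omega$, after which $\varphi\psi(a) - a$ lies in $J$, i.e.\ is trace-small but possibly operator-norm large. To annihilate this error in operator norm I would invoke $\Z$-stability: it provides copies of matrix algebras inside $A^\omega \cap A'$ and, via the Matui--Sato property (SI), a comparison mechanism by which a trace-small positive central sequence is dominated by, hence absorbed into, a complementary order-zero system. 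Feeding the resulting correction maps back alongside the von Neumann approximation, and bookkeeping the colours -- two from the (doubled) von Neumann partition of unity and two from the $\Z$-theoretic correction -- assembles a genuine four-colour operator-norm approximation, giving $\dimnuc(A) \le 3$.

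The main obstacle is precisely this transfer step. Everything on the von Neumann side is soft and gives essentially optimal colour counts, but the estimates there are only in $2$-norm, and a map can be trace-close to the identity while remaining operator-norm far from it. Controlling the operator-norm defect from purely tracial data is possible exactly because $\Z$-stability supplies both the matricial room in the central sequence algebra and, through property (SI), the comparison needed to swallow the trace-small defect by an orthogonal order-zero summand. Carrying out these corrections without letting the colour count blow up -- and, in contrast to the decomposition rank arguments of Matui and the first named author, without any quasidiagonality hypothesis, which is what forces the use of the weaker, non-contractive nuclear dimension and an extra colour -- is the delicate technical core of the proof.
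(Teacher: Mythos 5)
Your architectural outline correctly identifies the two sources of input (Connes' theorem on the von Neumann side, $\Z$-stability/SI on the $\mathrm{C}^*$-side) and even the $2\times 2$ colour count, but the mechanism you propose for the crucial transfer step is not the one that works, and as described it has a genuine gap. You propose to first build a finite-dimensional approximation $\varphi\psi$ of $\mathrm{id}_A$ that is close only in $\|\cdot\|_{2,\tau}$, so that after lifting, the defect $\varphi\psi(a)-a$ lies in the trace-kernel ideal $J_{A,\tau}$, and then to ``annihilate this error in operator norm'' by comparison/absorption using property (SI). This does not work: SI and strict comparison let you absorb a \emph{single tracially small positive (approximately central) element} into a tracially large order zero system, but $\varphi\psi(a)-a$ is a non-positive, non-central error depending on $a$, and there is no comparison mechanism that converts ``trace-small for every $a$'' into ``operator-norm small for every $a$'' at the cost of finitely many extra order zero colours. (If such a correction existed, it would essentially prove quasidiagonality-type statements that remain open.)

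The paper's proof is organised so that this situation never arises: every map in the factorisation is a genuine operator-norm order zero map, and tracial smallness is confined to one positive element, namely the unit defect. Concretely: (i) Proposition \ref{EmbeddingCone} produces a c.p.c.\ order zero map $\Psi_2:A\to U_\omega$ ($U$ UHF) which is trace-preserving and satisfies $1_{U_\omega}-\Psi_2(1_A)\in J_{U,\tau_U}$ --- this is the quasidiagonality substitute, obtained from the isomorphism of tracial ultrapowers onto $R^\omega$ cut down by a $\sigma$-ideal element, not by lifting a $2$-norm approximation of $\mathrm{id}_A$; (ii) the $2$-coloured approximately inner flip of Lemma \ref{FlipLemma} (this, not a ``doubled von Neumann partition of unity,'' is where Connes' flip and SI enter) conjugates the first-factor embedding $a\mapsto a\otimes 1_A\otimes 1_V$ onto $1_A\otimes a\otimes 1_V$, so that $\sum_i\Psi(v^{(i)}(1_A\otimes a\otimes 1_V)v^{(i)*})=a\otimes\Psi_2(1_A)\otimes 1_V$, with each summand factoring through a finite-dimensional algebra by Lemma \ref{Factorise}; (iii) the remaining unit defect $\Psi_2(1_A)\otimes 1_V$ is repaired by Lemma \ref{2OrderZeroMaps}, which rests on the uniqueness theorem (Lemma \ref{ShuffleLemma}) for positive contractions of full spectrum in $(\Z\otimes\Z)_\omega$, proved via the Ciuperca--Elliott Cuntz-semigroup classification of $^*$-homomorphisms from $C_0(0,1]$ over algebras of stable rank one. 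This last ingredient --- two order zero maps $\Lambda^{(0)},\Lambda^{(1)}$ into $\Z_\omega$ with $\sum_j\Lambda^{(j)}(\Psi_2(1_A)\otimes 1_V)=1_{\Z_\omega}$ --- is entirely absent from your sketch, and it, rather than an SI absorption of the approximation error, is what supplies the second factor of $2$ in the colour count and lands the factorisation in $A\otimes\Z\cong A$.
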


Upon combining Theorem \ref{Main} with the main results of \cite{MS:Acta}, and the general implications (i)$\Rightarrow$(ii) of \cite{W:Invent2} and (ii)$\Rightarrow$(iii) of \cite{R:IJM}, this confirms the Toms-Winter conjecture in the form stated above for $\mathrm{C}^*$-algebras with a unique  tracial state. Thus the difference between finite decomposition rank and finite nuclear dimension for unital, simple, separable and monotracial $\Z$-stable algebras is precisely quasidiagonality, and it remains an open question as to whether such algebras are automatically quasidiagonal. (Indeed this is a special case of the Blackadar-Kirchberg conjecture from \cite{BK:MA}, which asks whether every stably finite nuclear $\mathrm{C}^*$-algebra is quasidiagonal.)
\begin{corollary}
Conjecture \ref{CA} holds under the additional assumption that $A$ has at most one tracial trace.
\end{corollary}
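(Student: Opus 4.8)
The plan is to establish the three-fold equivalence in Conjecture~\ref{CA} by assembling a cycle of implications, taking every arrow but one from results already recorded above and supplying the remaining arrow from Theorem~\ref{Main}. Fix $A$ simple, separable, unital, nuclear and non-elementary, now with at most one tracial state. Two implications hold with no hypothesis on the trace space and may be quoted directly: (i)$\Rightarrow$(ii) is the theorem of the third named author \cite{W:Invent2}, and (ii)$\Rightarrow$(iii) is R\o{}rdam's \cite{R:IJM}. Together these yield (i)$\Rightarrow$(ii)$\Rightarrow$(iii), so it remains only to feed (iii) back to (ii) and to recover (i) from (ii); closing the cycle at any point then forces all three conditions to be equivalent.

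To close it I would split on whether $A$ admits a tracial state. In the monotracial case the return implication (iii)$\Rightarrow$(ii) is precisely the main result of Matui and the first named author \cite{MS:Acta}, while (ii)$\Rightarrow$(i) is Theorem~\ref{Main}, which in fact bounds $\dimnuc(A)$ by $3$. In the traceless case $\mathcal{Z}$-stability forces $A$ to be purely infinite, so that (iii)$\Rightarrow$(ii) reduces to Kirchberg's $\mathcal{O}_\infty$-stability theorem \cite{Kir:ICM} and (ii)$\Rightarrow$(i) to the computation of nuclear dimension for purely infinite algebras in \cite{MS:Preprint}. Either way the loop (i)$\Rightarrow$(ii)$\Rightarrow$(iii)$\Rightarrow$(ii)$\Rightarrow$(i) is complete.

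All the genuinely new content lives inside Theorem~\ref{Main}, so once its statement is granted the only remaining obstacle is organisational rather than mathematical. The main points to verify are that \emph{at most one tracial state} partitions cleanly into the monotracial and traceless regimes, both of which the cited results cover, and that the non-elementary hypothesis of Conjecture~\ref{CA} is consistent with the setting of Theorem~\ref{Main}; indeed $\mathcal{Z}$-stability already rules out the finite-dimensional (elementary) case, since a matrix algebra cannot absorb the infinite-dimensional $\mathcal{Z}$. I therefore expect the hardest step to be Theorem~\ref{Main} itself, with the corollary following as essentially bookkeeping that identifies it as the arrow (ii)$\Rightarrow$(i) previously unavailable in the monotracial case.
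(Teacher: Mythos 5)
Your proposal is correct and follows the same route as the paper: the cycle (i)$\Rightarrow$(ii)$\Rightarrow$(iii)$\Rightarrow$(ii)$\Rightarrow$(i) assembled from \cite{W:Invent2}, \cite{R:IJM}, \cite{MS:Acta} and Theorem~\ref{Main} in the monotracial case, with the traceless case handled by Kirchberg's $\mathcal{O}_\infty$-stability theorem and \cite{MS:Preprint}, exactly as indicated in the paper's introduction. One small phrasing slip: for (iii)$\Rightarrow$(ii) in the traceless case it is strict comparison together with the absence of (quasi)traces that forces pure infiniteness (so that Kirchberg's theorem applies), not $\mathcal{Z}$-stability, which is the conclusion of that arrow.
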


\bigskip

In the rest of the introduction we discuss the analogies between the regularity properties in Conjecture \ref{CA} and properties of von Neumann factors of type II$_1$ and provide an architectural outline of the proof of Theorem \ref{Main}. 

 Each of the statements in the Toms-Winter conjecture is a natural analogue of a corresponding property for II$_1$ factors: strict comparison relates to the fact that the Murray-von Neumann lattice of projections in a II$_1$ factor is determined by the trace; tensorial absorption of $\Z$ to being a McDuff factor (absorbing the hyperfinite II$_1$ factor tensorially); and finite nuclear dimension to hyperfiniteness. For (separably acting) injective II$_1$ factors these properties are theorems (of course comparison holds for all II$_1$ factors), and all play roles in carrying through the implication from injectivity to hyperfiniteness in Connes' celebrated paper \cite{C:Ann}.  From this viewpoint, carrying through the implications (iii)$\Rightarrow$(ii)$\Rightarrow$(i) in the Toms-Winter conjecture provides a $\mathrm{C}^*$-algebraic version of Connes' characterisations of injectivity.

In his work Connes relies on a detailed analysis of automorphisms of II$_1$ factors, and the notion of an \emph{approximately inner flip} (the automorphism $x\otimes y\mapsto y\otimes x$ on the tensor square can be approximated by inner automorphisms) plays a key role. A crucial step in Connes' proof that injectivity implies hyperfiniteness is to show that any automorphism of an injective II$_1$ factor with separable predual is approximately inner \cite[Corollary 3.2, Theorem 5.3]{C:Ann} in the strong topology. In particular, it has strongly approximately inner flip \cite[Theorem 5.1 (7)$\Rightarrow$(3)]{C:Ann} and therefore is McDuff  \cite[Theorem 5.1 (3)$\Rightarrow$(2)]{C:Ann} in a particularly  strong fashion; from this Connes obtains hyperfiniteness of $M$ \cite[Theorem 5.1 (2)$\Rightarrow$(1)]{C:Ann}. Of course, by now two beautiful alternative proofs without using automorphisms have been obtained by Haagerup and by Popa, but for our purposes it will be more suitable to focus on the original strategy involving approximately inner flips.

Effros and Rosenberg initiated the study of approximately inner flips in the setting of $\mathrm{C}^*$-algebras in \cite{ER:PJM}. As they observe, an approximately inner flip is a strong requirement on a $\mathrm{C}^*$-algebra (implying simplicity, nuclearity and at most one trace), but several prominent examples enjoy such a flip, like uniformly hyperfinite (UHF) $\mathrm{C}^*$-algebras, the Jiang-Su algebra $\Z$ and the Cuntz algebras $\mathcal O_2$ and $\mathcal O_\infty$, all of which play key roles in the modern study of nuclear $\mathrm{C}^*$-algebras. Using the approximately inner flip, Effros and Rosenberg characterised the universal UHF algebra $\mathcal Q$ as the unique separable unital $\mathrm{C}^*$-algebra which is quasidiagonal, has an approximately inner flip and tensorially absorbs $\Q$ \cite[Theorem 5.1]{ER:PJM}, using a $\mathrm{C}^*$-analogue of Connes' implication \cite[Theorem 5.1 (2)$\Rightarrow$(1)]{C:Ann}.  With the benefit of hindsight, this result can be thought of as the precursor to the strategy of \cite{MS:Preprint} and our work here. Effros and Rosenberg compare the first factor embedding $A\mapsto (A\otimes \Q)_\omega$ of $A$ into the ultrapower of $A\otimes\Q$ with an embedding of $A$ into $1_A\otimes \Q_\omega\subset(A\otimes\Q)_\omega$ obtained from quasidiagonality. They then conjugate these two embeddings using the approximately inner flip on $A$, and obtain an approximately finite dimensional (AF) structure on $A$ directly from this, whence $A$ is isomorphic to $\Q$ by Elliott's classification theorem \cite{E:JA}. From today's perspective, Effros and Rosenberg's approach can be used to directly give approximations verifying decomposition rank zero (equivalent to being AF by \cite{KW:IJM}).

Here, it should be noted that there is a significant difference between approximately inner flips in von Neumann algebras and $\mathrm{C}^*$-algebras. In both contexts this implies amenability, but there are $K$-theoretic obstructions preventing a general simple unital nuclear $\mathrm{C}^*$-algebra with unique trace from having an approximately inner flip. Indeed AF algebras with an approximately inner flip must be UHF \cite{ER:PJM}. Thus a key idea is that of a ``2-coloured approximately inner flip'' for a $\mathrm{C}^*$-algebra (in the norm sense) as introduced in \cite{MS:Preprint}, and this notion applies to a broader class of algebras. Starting from Connes' approximately inner flip on an injective II$_1$ factor, such a 2-coloured approximately inner flip was established for simple separable unital and nuclear $\mathrm{C}^*$-algebras with a unique trace which absorb a UHF-algebra tensorially by using a 2-coloured approximate Murray-von Neumann equivalence, which in turn is inspired by Haagerup's approach \cite{H:JFA} to injectivity implies hyperfiniteness. This ingredient is structured so that  the flipping strategy gives the estimate $\dr(A)\leq 1$ for a quasidiagonal simple separable unital nuclear $\mathrm{C}^*$-algebra $A$ which is $\mathcal{Q}$-stable (i.e. $A \cong A \otimes \mathcal{Q}$) and has unique trace -- the two colours in the decomposition rank estimate arising from those in the approximately inner flip. The estimate $\dr(A)\leq 3$ (i.e.\ a $4$-coloured approximation) of \cite[Theorem 1.1]{MS:Preprint} in the $\mathcal{Z}$-stable case arises from using the UHF-stable approximations twice.

In both \cite{ER:PJM} and \cite{MS:Preprint} quasidiagonality provides $^*$-homomorphisms from a quasidiagonal $\mathrm{C}^*$-algebra $A$ into the ultrapowers $U_\omega$ of suitable UHF algebras (analogous to the asymptotic embeddings into the hyperfinite II$_1$ factor which arise from Condition (2) in \cite[Theorem 5.1]{C:Ann}). As Voiculescu shows in \cite{V:Duke}, the cone over any $\mathrm{C}^*$-algebra $A$ is quasidiagonal, so nontrivial order zero maps from $A$ into these ultrapowers always exist. We aim to use such  maps in place of the $^*$-homomorphisms used in \cite{ER:PJM,MS:Preprint}. As a byproduct we obtain an alternative proof of Voiculescu's result when $A$ is nuclear without elementary quotients, and has a separating family of tracial states. The argument uses recent work of Kirchberg and R\o{}rdam on the tracial ideal in an ultrapower \cite{KR:Crelle}, which also shows that the order zero maps $\phi:A\rightarrow U_\omega$ can be taken to preserve a fixed trace on $A$ (cf.\ Lin's work \cite{Lin:crossed-product-AF-embedding} on AF-embeddability of crossed products), and are unital modulo the tracial ideal.   We perform the flipping strategy working in an algebra of the form $(A\otimes A\otimes V)_\omega$, the presence of a UHF algebra $V$ providing the $2$-coloured approximately inner flip. We then use the extra properties of the map $\phi$ to obtain order zero maps $\Psi^{(0)},\Psi^{(1)}:(U\otimes V)_\omega\rightarrow\Z$ such that $\sum_{i=0}^1\Psi^{(i)}(\phi(1_A)\otimes 1_V)=1_{\Z_\omega}$ (for this we use a uniqueness result for certain positive contractions in $\Z_\omega$ proved using recent Cuntz-semigroup classification theorems \cite{CE:IMRN,R:Adv}); thus the resulting factorisation takes values in $A\otimes\Z$. This strategy of proving results for $\Z$-stable algebras via the UHF-stable case has its origins in \cite{Win:localizingEC}. The use of two maps $\Psi^{(i)}$ doubles the number of colours in the final nuclear dimension approximation, leading to a $4$-coloured factorisation: two  colours from the $\Psi^{(i)}$, multiplied by two colours from the $2$-coloured approximately inner flip.  Accordingly, when $A$ does have an approximately inner flip, for example when it is strongly self-absorbing, we get an estimate $\dimnuc(A)\leq 1$ in Theorem \ref{Main} (see Theorem \ref{ApproxFlipDim1}).

In Section \ref{regularity-section} we recall the notion of order zero maps and the regularity properties appearing in the Toms-Winter conjecture. In Section \ref{AOD} we examine asymptotically order zero maps and establish some technical preliminaries.  Section \ref{QD} provides our ``trace-preserving''-quasidiagonality of cones over tracial nuclear $\mathrm{C}^*$-algebras. In Section \ref{IFlip} we extract the $2$-coloured approximately inner flip for monotracial, simple, unital, nuclear and UHF-stable $\mathrm{C}^*$-algebras from the proof of \cite[Theorem 4.2]{MS:Preprint}, and record some of its consequences.  Section \ref{Shuff} provides the uniqueness theorem used in Section \ref{LastSect} to produce the two maps $\Psi^{(i)}$ and then prove Theorem \ref{Main}.

\renewcommand*{\thetheorem}{\roman{theorem}}
\numberwithin{theorem}{section}

\section{Order zero maps and regularity properties}\label{regularity-section}

\noindent
In this section we recall some basic facts about completely positive order zero maps, and the regularity properites which appear in Conjecture~\ref{CA}. 

\bigskip

Let $A$ and $B$ be $\mathrm{C}^*$-algebras. A completely positive (c.p.) map $\psi:A\rightarrow B$ is said to be \emph{order zero} if it preserves orthogonality, i.e.\ $\psi(e)\psi(f)=0$ whenever $ef=0$. The structure theorem from  \cite{WZ:MJM} (based on the respective result of Wolff from \cite{Wol:disjointness} for disjointness preserving bounded maps) for order zero maps $\psi:A\rightarrow B$ provides a \emph{supporting $^*$-homomorphism} $\pi$ from $A$ into the multiplier algebra $\mathcal M(\mathrm{C}^*(\psi(A)))\subseteq B^{**}$ and a positive element $h\in \mathcal M(\mathrm{C}^*(\psi(A)))\cap \psi(A)'$ such that $\psi(a)=\pi(a)h=h\pi(a)$ for all $a\in A$.  In the case that $A$ is unital (as it will be throughout the paper) one can take $h=\psi(1_A)$ so that $\psi(a)=\psi(1_A)\pi(a)$ for $a\in A$  and it follows that a c.p.\ map $\psi:A\rightarrow B$ is order zero if and only if
\begin{equation}\label{OrderZeroRelation}
\psi(x)\psi(y)=\psi(xy)\psi(1_A),\quad x,y\in A.
\end{equation}
From the structure theorem one obtains two further key properties:
\begin{itemize}
\item the functional calculus for completely positive and contractive (c.p.c.) order zero maps \cite[Corollary 4.2]{WZ:MJM}: given $f\in C_0(0,1]_+$, define an order zero map $f(\psi):A\rightarrow B$ by $f(\psi)(a)=\pi(a)f(h)$ which is $\pi(a)f(\psi(1_A))$ when $A$ is unital. 
\item the duality between c.p.c.~ order zero maps $\psi:A\rightarrow B$ and $^*$-homomorphisms $C_0(0,1]\otimes A\rightarrow B$ on the cone over $A$, \cite[Corollary 4.1]{WZ:MJM}.
\end{itemize}

\bigskip

The nuclear dimension of a $\mathrm{C}^*$-algebra from \cite{WZ:Adv} is defined in terms of c.p.\ approximations which are uniformly decomposable into sums of order zero maps (cf.\ \cite{HKW:Adv}, which shows that approximations as below can be found for any nuclear $\mathrm{C}^*$-algebra when  $n$ is allowed to vary with $i$). The notion is inspired by the idea of regarding a nuclear $\mathrm{C}^{*}$-algebra as a noncommutative topological space; completely positive approximations then play the role of noncommutative partitions of unity.
\begin{definition}[{\cite[Definition 2.1]{WZ:Adv}, \cite[Definition 3.1]{KW:IJM}}]\label{DefND}
A $\mathrm{C}^*$-algebra $A$ has \emph{nuclear dimension} at most $n\in\mathbb N$, written $\dimnuc(A)\leq n$, if there exists a net $(F_i,\psi_i,\phi_i)_{i\in I}$ where $F_i$ are finite dimensional $\mathrm{C}^*$-algebras, which decompose as $F_i=F_i^{(0)}\oplus \dots \oplus F_i^{(n)}$, $\psi_i:A\rightarrow F_i$ are c.p.c.~ and $\phi_i:F_i\rightarrow A$ are c.p.\ such that $\lim_i\phi_i(\psi_i(a))=a$ for $a\in A$, and each $\phi_i|_{F^{(j)}_i}$ is contractive and order zero. If additionally each $\phi_i$ is contractive, then $A$ is said to have \emph{decomposition rank} at most $n$.
\end{definition}

\bigskip

The Jiang-Su algebra $\mathcal{Z}$ was constructed in \cite{JS:AJM} as an inductive limit of prime dimension drop intervals. It is simple, projectionless, has a unique tracial state and is strongly self-absorbing in the language of \cite{TW:TAMS}, i.e.~ $\Z\cong \Z\otimes\Z$ via an isomorphism $\theta:\Z\rightarrow\Z\otimes\Z$ which is approximately unitarily equivalent to the first factor embedding $a\mapsto a\otimes 1_\Z$, \cite[Theorem 7.6, Theorem 8.7]{JS:AJM}. There are by now several alternative ways (ranging from very concrete to very abstract) of characterising $\mathcal{Z}$. Likewise, $\mathcal{Z}$-stability, i.e.\ the property of absorbing the Jiang-Su algebra tensorially, can be expressed in quite different fashions. We recall a result which is particularly useful in this context, and which does not require the definition of the algebra $\mathcal{Z}$ itself. The idea is to realise a finite set of generators and relations inside the central sequence algebra (of course, the latter may also be replaced by an ultrapower; cf.\ \ref{ultrapower-notation}). The statement essentially combines \cite[Proposition 5.1]{RW:Crelle} with \cite[Proposition 2.2]{TW:CJM}; an approximate version was given in \cite[Proposition 2.3]{W:Invent1}.
\begin{proposition}\label{NewProp1.2}
A separable unital $\mathrm{C}^{*}$-algebra $A$ is $\mathcal{Z}$-stable if and only if, for any $2 \le p \in \mathbb{N}$, there are c.p.c.\ order zero maps
\begin{equation}
\Phi:M_{p} \longrightarrow \big(\ell^{\infty}(A)/c_{0}(A)\big) \cap A'
\end{equation}
and
\begin{equation}
\Psi:M_{2} \longrightarrow \big(\ell^{\infty}(A)/c_{0}(A)\big) \cap A'
\end{equation}
satisfying the relations 
\begin{equation}
 \Psi\big(e^{(2)}_{11}\big) + \Phi\big(1_{M_{p}}\big) = 1_{\ell^{\infty}(A)/c_{0}(A)} \mbox{ and }\Psi\big(e^{(2)}_{22}\big) \Phi\big(e^{(p)}_{11}\big) = \Psi\big(e^{(2)}_{22}\big).
\end{equation}
\end{proposition}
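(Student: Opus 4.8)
The plan is to factor the equivalence through the characterisation of $\Z$-stability by unital $^*$-homomorphisms out of a prime dimension drop algebra, and to recognise the two displayed relations as a generators-and-relations presentation of such a homomorphism by means of the cone duality for order zero maps. Throughout write $F(A) := \big(\ell^{\infty}(A)/c_{0}(A)\big) \cap A'$ for the (sequential) central sequence algebra; since $A$ is unital, $1_{\ell^{\infty}(A)/c_{0}(A)} \in F(A)$, so it makes sense to ask for \emph{unital} maps into $F(A)$. I would take as the two pillars of the argument \cite[Proposition~5.1]{RW:Crelle}, which asserts that $A$ is $\Z$-stable if and only if there is a unital $^*$-homomorphism $Z_{p,p+1} \to F(A)$ from the prime dimension drop algebra for some (equivalently, every) $p \ge 2$, and the translation \cite[Proposition~2.2]{TW:CJM} between such homomorphisms and pairs of order zero maps $(\Phi,\Psi)$.

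For the ``if'' direction, suppose $\Phi$ and $\Psi$ are given. By the cone duality \cite[Corollary~4.1]{WZ:MJM} they correspond to $^*$-homomorphisms $C_0(0,1]\otimes M_p \to F(A)$ and $C_0(0,1]\otimes M_2 \to F(A)$, and I would use the order zero functional calculus to manufacture from these a single $^*$-homomorphism on $Z_{p,p+1}$. The first relation $\Psi(e^{(2)}_{11}) + \Phi(1_{M_p}) = 1$ says that $\Phi(1_{M_p})$ and its complement $\Psi(e^{(2)}_{11})$ partition the unit, allowing the two supporting homomorphisms to be glued along the interval into a path whose endpoints carry the $M_p$-fibre (from $\Phi$) and the reconstructed $M_{p+1}$-fibre; the partial isometry data in $\Psi(e^{(2)}_{12})$ identifies the complement $\Psi(e^{(2)}_{11})$ with the subelement $\Psi(e^{(2)}_{22}) \le \Phi(e^{(p)}_{11})$, while the second relation $\Psi(e^{(2)}_{22})\Phi(e^{(p)}_{11}) = \Psi(e^{(2)}_{22})$ is exactly the boundary condition pinning this path into the drop fibre. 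This is precisely the data classified by \cite[Proposition~2.2]{TW:CJM}, and \cite[Proposition~5.1]{RW:Crelle} then delivers $\Z$-stability.

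For the ``only if'' direction, suppose $A$ is $\Z$-stable. Since $\Z$ is an inductive limit of dimension drop algebras and $\Z$-stability provides a unital $^*$-homomorphism $\Z \to F(A)$, composition yields unital homomorphisms $\rho \colon Z_{p,p+1} \to F(A)$ for \emph{every} $p \ge 2$ (this is also the content of the ``every $p$'' clause of \cite[Proposition~5.1]{RW:Crelle}). Reading off $\Phi$ as the order zero map dual to the restriction of $\rho$ to the $M_p$-endpoint fibre, and extracting $\Psi$ as the $M_2$-corner encoding the complementary fibre together with the connecting path, I recover the two maps; the defining boundary conditions of $Z_{p,p+1}$ translate, under \cite[Proposition~2.2]{TW:CJM}, into the two required relations.

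The hard part is the translation step itself: one must verify that the two relations do not merely follow from, but completely present, a unital homomorphism out of $Z_{p,p+1}$ -- that is, that order zero maps satisfying them extend, with no further relations, to all of the dimension drop algebra, and conversely that the maps extracted from a genuine homomorphism really are order zero and contractive with the stated normalisations. This is where the order zero structure theorem and its functional calculus from \cite{WZ:MJM} carry the weight, and where one leans on the economy afforded by \cite[Proposition~2.2]{TW:CJM}, namely that an $M_2$-valued connecting map suffices in place of a full $M_{p+1}$-endpoint, the missing fibre being reconstructed from $\Phi$.
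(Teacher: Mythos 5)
Your proposal is correct and follows exactly the route the paper itself indicates: it notes that the statement ``essentially combines [R\o{}rdam--Winter, Proposition~5.1] with [Toms--Winter, Proposition~2.2]'', i.e.\ the characterisation of $\mathcal Z$-stability by unital $^*$-homomorphisms $Z_{p,p+1}\to(\ell^\infty(A)/c_0(A))\cap A'$ together with the presentation of such homomorphisms by the pair $(\Phi,\Psi)$ of order zero maps subject to the two displayed relations. The elaboration you give of the translation step (cone duality and the order zero structure theorem reconstructing the missing $M_{p+1}$-fibre from $\Phi$ and the $M_2$-corner) is exactly the content of the cited results, so nothing further is needed.
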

Note that this result is in complete analogy to McDuff's characterisation of $\mathcal{R}$-stability of II$_1$ factors in terms of the existence of approximately central copies of matrix algebras \cite{M:PLMS}, replacing a unital $^{*}$-homomorphism from matrices by an order zero map $\Phi$ which is large, in that $1-\Phi(1_{M_p})$ is dominated by $\Phi(e_{11}^{(p)})$ (as witnessed by $\Psi$).

\bigskip

For $D$ a $\mathrm{C}^*$-algebra with positive elements $x,y\in D\otimes\K$ write $x\precsim y$ when there exists a sequence $(v_n)_{n=1}^\infty$ in $D\otimes\K$ with $v_n^*yv_n\rightarrow x$.  Write $x\sim y$ when $x\precsim y$ and $y\precsim x$. The \emph{Cuntz semigroup} is defined to be $\Cu(D)=(D\otimes\K)_+/_{\sim}$, and we write $\langle x\rangle$ for the class of $x\in (D\otimes \K)_+$. This is equipped with an addition arising from identification of $\mathcal K\cong \mathcal K\otimes M_2$ to take an orthogonal sum of two positive elements of $D\otimes\mathcal K$, an order $\leq$ induced from $\precsim$, and has the property that every increasing sequence in $\Cu(D)$ has a supremum (\cite{CEI:Crelle}). An abstract category $\CU$ containing the Cuntz semigroups of $\mathrm{C}^*$-algebras was set out in \cite{CEI:Crelle}, which shows that the assignment $\Cu(\cdot)$ is functorial and preserves sequential inductive limits. Given a positive element $x\in A$ and $\eps>0$, write $(x-\eps)_+$ for $h_\eps(x)$ where $h_\eps(t)=\max(t-\eps,0)$.  Positive elements $x,y\in D\otimes\K$ satisfy $x\precsim y$ if and only if $(x-\eps)_+\precsim y$ for all $\eps>0$. This last result can be found in the survey article \cite{APT:Contemp} (as Proposition 2.17), to which we refer for a full account of the Cuntz semigroup and the category $\CU$.

\begin{definition}
A simple, separable, unital $\mathrm{C}^{*}$-algebra $D$ is said to have strict comparison if the following holds:

Whenever there are positive nonzero elements $a,b \in D \otimes \mathcal{K}$ such that 
\begin{equation}
\lim_{k \to \infty} \tau(a^{\frac{1}{k}}) < \lim_{k \to \infty} \tau(b^{\frac{1}{k}})
\end{equation} for any $2$-quasitrace $\tau$ on $D$, then $a\precsim b$.
\end{definition}

A (2-quasi)trace $\tau$ on a unital $\mathrm{C}^*$-algebra $D$ defines a functional $d_\tau$ on $\Cu(D)$ by $d_\tau(\langle a\rangle)=\mu_a((0,\infty))$ for $a\in (D\otimes\K)_+$, where $\mu_{a}$ is the measure on $[0,\infty)$ induced by $\mu_{a}(f)=\tau(f(a))$ for $f\in C_0[0,\infty)_+$. In particular, note that $d_\tau(\langle f(a)\rangle)=\mu_a(\supp(f))$ where $\supp(f)=\{t:f(t)\neq 0\}$ is the open support of $f\in C(\mathrm{Sp}(a))$. (We only need the definition of $d_\tau$ when $a\in D$, but in general $\tau$ is extended to a lower semicontinuous ($2$-quasi)trace on $D\otimes\mathcal K$ to make this definition.)  With this terminology, a simple, separable, unital $\mathrm{C}^*$-algebra $D$ has strict comparison if and only if $d_\tau(\langle a\rangle)<d_\tau(\langle b\rangle)$ for all $2$-quasitraces $\tau$ implies that $a\precsim b$. It should be pointed out that we will only encounter strict comparison in situations when it is known that all $2$-quasitraces are in fact traces.

In order to appreciate the algebraic flavour of strict comparison, consider R{\o}rdam's characterisation in terms of an order property of the Cuntz semigroup $\mathrm{Cu}(A)$; see \cite{R:IJM} or \cite[Section 5]{APT:Contemp}.    
\begin{proposition}
A simple, separable, unital, nuclear $\mathrm{C}^{*}$-algebra $A$ has strict comparison if and only if its Cuntz semigroup $\mathrm{Cu}(A)$ is almost unperforated, i.e., $(n+1) \cdot \langle a \rangle \le n \cdot \langle b \rangle$ for some $n\in\mathbb N$ implies $\langle a \rangle \le \langle b \rangle$. 
\end{proposition}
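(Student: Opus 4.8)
The plan is to prove the two implications separately, using throughout the dictionary $d_\tau(\langle a\rangle)=\lim_k\tau(a^{1/k})$ and the criterion $\langle a\rangle\le\langle b\rangle\iff(a-\eps)_+\precsim b$ for all $\eps>0$. Since $A$ is simple and unital, every $2$-quasitrace is bounded and faithful, the set $QT_1(A)$ of normalised $2$-quasitraces is weak-$*$ compact, and $d_\tau(\langle b\rangle)\ge\tau(b)>0$ for every $\tau$ whenever $b\neq0$; after a routine approximation I may assume $a,b\in M_k(A)_+$, so that all rank functions take values in $[0,k]$. The two facts I shall lean on about $\tau\mapsto d_\tau$ are that $\tau\mapsto d_\tau(\langle x\rangle)$ is lower semicontinuous (being a supremum of the weak-$*$-continuous maps $\tau\mapsto\tau(f_m(x))$ for $f_m\nearrow 1_{(0,\infty)}$) and that the functionals $d_\tau$ exhaust the functionals on $\Cu(A)$.

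For \emph{strict comparison $\Rightarrow$ almost unperforation}, suppose $(n+1)\langle a\rangle\le n\langle b\rangle$; I want $\langle a\rangle\le\langle b\rangle$. If $b=0$ then $\langle a\rangle=0$ and there is nothing to prove, so assume $b\neq0$, whence $d_\tau(b)>0$ for all $\tau$. Fix $\eps>0$ and apply $d_\tau$ to the hypothesis to get $(n+1)\,d_\tau((a-\eps)_+)\le(n+1)\,d_\tau(a)\le n\,d_\tau(b)$. Since $d_\tau(b)>0$, this forces $d_\tau((a-\eps)_+)\le\tfrac{n}{n+1}d_\tau(b)<d_\tau(b)$ for every $\tau$, so strict comparison yields $(a-\eps)_+\precsim b$. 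As $\eps>0$ was arbitrary, $\langle a\rangle\le\langle b\rangle$.

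The substance is \emph{almost unperforation $\Rightarrow$ strict comparison}. Assume $d_\tau(a)<d_\tau(b)$ for all $\tau$; I must show $(a-\eps)_+\precsim b$ for each fixed $\eps>0$. The first step is a compactness argument producing a uniform gap: choosing a continuous $g_\eps$ with $1_{[\eps,\infty)}\le g_\eps\le 1_{(\eps/2,\infty)}$, the function $\phi_\eps(\tau):=\tau(g_\eps(a))$ is weak-$*$-continuous and satisfies $d_\tau((a-\eps)_+)\le\phi_\eps(\tau)\le d_\tau(a)<d_\tau(b)$. As $d_\tau(b)$ is lower semicontinuous and bounded below by a positive constant on the compact set $QT_1(A)$, the quotient $\phi_\eps(\tau)/d_\tau(b)$ is upper semicontinuous and strictly below $1$, hence attains a maximum $c_\eps<1$; picking $n$ with $n/(n+1)\ge c_\eps$ gives $(n+1)\,d_\tau((a-\eps)_+)\le n\,d_\tau(b)$ for every $\tau$. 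The second step converts this inequality of rank functions into the genuine Cuntz relation $(n+1)\langle(a-\eps)_+\rangle\le n\langle b\rangle$, after which almost unperforation cancels the multiplicities to give $\langle(a-\eps)_+\rangle\le\langle b\rangle$, i.e. $(a-\eps)_+\precsim b$.

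I expect this \emph{second step} to be the main obstacle, and it is where the hypotheses on $A$ and the structure theory of the Cuntz semigroup are really used: an inequality between the values $d_\tau$ says nothing about the Cuntz order for a general pair of elements, since perforation phenomena (as in $K_0$ of $C(S^2)$-type algebras) obstruct exactly this. The conversion rests on the identification of the functionals on $\Cu(A)$ with the lower semicontinuous $2$-quasitraces, so that the rank data I control is all the functional data, together with the fact that the left-hand element $(n+1)\langle(a-\eps)_+\rangle$ is of the form $\langle(\,\cdot\,-\delta)_+\rangle$ and hence lies in the part of $\Cu(A)$ on which functionals determine the order; almost unperforation is then what upgrades the resulting ratio estimate to the plain comparison $\langle(a-\eps)_+\rangle\le\langle b\rangle$. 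Letting $\eps\to0$ completes the proof.
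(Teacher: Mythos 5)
Note first that the paper does not prove this proposition; it is quoted from R{\o}rdam \cite{R:IJM} (see also \cite[Section 5]{APT:Contemp}), so your attempt has to be measured against the standard argument there. Your forward direction (strict comparison $\Rightarrow$ almost unperforation) is correct, and Step 1 of your converse --- the compactness argument on $QT_1(A)$ producing a single $n$ with $(n+1)\,d_\tau((a-\eps)_+)\le n\,d_\tau(b)$ for \emph{all} normalised $2$-quasitraces --- is a legitimate (and genuinely useful) component. The gap is exactly where you predicted it, in Step 2, and the resolution you sketch does not work. You propose to pass from the functional inequality to the order inequality $(n+1)\langle(a-\eps)_+\rangle\le n\langle b\rangle$ on the grounds that $(a-\eps)_+$ ``lies in the part of $\Cu(A)$ on which functionals determine the order'', and only afterwards invoke almost unperforation to cancel the multiplicities. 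But there is no such part of $\Cu(A)$ in general: since every class in $\Cu(A)$ is determined by its cut-downs $\langle(\,\cdot\,-\delta)_+\rangle$, the principle you appeal to is essentially the assertion that the functionals $d_\tau$ determine the Cuntz order, i.e.\ strict comparison itself --- the statement being proved. In Villadsen-type algebras one has $d_\tau((a-\eps)_+)<d_\tau(b)$ for every $\tau$ while $(a-\eps)_+\not\precsim b$, and the same examples show that your intermediate claim $(n+1)\langle(a-\eps)_+\rangle\le n\langle b\rangle$ can fail even though the corresponding inequality of rank functions holds. So Step 2 as written is circular.

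The actual argument of \cite[Propositions 3.1 and 3.2]{R:IJM} runs in the contrapositive and uses almost unperforation only once, at the end. One first uses simplicity (fullness of $b$) to obtain $\langle(a-\eps)_+\rangle\le m\langle b\rangle$ for some $m$ --- an ingredient absent from your sketch but needed to make sense of states normalised at $\langle b\rangle$. If $(k+1)\langle(a-\eps)_+\rangle\le k\langle b\rangle$ for some $k$, almost unperforation finishes the proof; if this fails for every $k$, the Goodearl--Handelman separation theorem for ordered semigroups produces a state $f$ with $f(\langle(a-\eps)_+\rangle)\ge f(\langle b\rangle)$. This $f$ is a dimension function that need not be lower semicontinuous; one passes to its lower semicontinuous regularisation, identifies that with $d_\tau$ for a $2$-quasitrace $\tau$ via Blackadar--Handelman \cite{BH:JFA}, and then the gap from your Step 1 (applied at cut-off $\eps/2$, using that $\langle(a-\eps)_+\rangle$ is compactly contained in $\langle(a-\eps/2)_+\rangle$) yields a contradiction. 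In short: the comparison with multiplicities is produced by a Hahn--Banach-type separation argument, not by ``functionals determining the order''; the three ingredients your proposal is missing are the fullness bound, the Goodearl--Handelman theorem, and the regularisation step connecting abstract states on $\Cu(A)$ with $2$-quasitraces.
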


\section{Asymptotically order zero maps}\label{AOD}

\noindent
We will be repeatedly concerned with sequences of asymptotically order zero maps inducing order zero maps into ultrapowers.  Firstly we set out our notation for working with the latter.
\begin{notation}\label{ultrapower-notation}
Throughout the paper $\omega$ will denote a free ultrafilter on $\mathbb N$. Given a $\mathrm{C}^*$-algebra $A$, write $A_\omega$ for the ultrapower obtained as the quotient $\ell^\infty(A)/c_\omega(A)$, where $c_\omega(A)=\{(x_n)_{n=1}^\infty\in\ell^\infty(A):\lim_{n\rightarrow\omega}\|x_n\|=0\}$.  We shall generally suppress the quotient map $\ell^\infty(A)\rightarrow A_\omega$ and instead say that $(x_n)_{n=1}^\infty$ \emph{represents} or \emph{lifts} $x\in A_\omega$ if its image in the quotient is $x$. It is well known that positive contractions can be lifted to sequences of positive contractions, and unitaries to sequences of unitaries.   We regularly regard $A$ as embedded in $A_\omega$, where $a\in A$ is represented by the sequence with constant value $a$.  We also work regularly with ultrapowers of tensor products of the form $(A\otimes B)_{\omega}$ and throughout the paper $\otimes$ denotes the spatial tensor product.\end{notation}

Given $\mathrm{C}^*$-algebras $A$ and $B$, a uniformly bounded sequence $(\psi_n)_{n=1}^\infty$ of bounded linear maps $:A\rightarrow B$ induces a bounded linear map $\Psi:A_\omega\rightarrow B_\omega$ defined at the level of representatives by $(x_n)_{n=1}^\infty\mapsto (\psi_n(x_n))_{n=1}^\infty$.  When each $\psi_n$ is completely positive, contractive, a $^*$-homomorphism, or order zero, then $\Psi$ enjoys the same property.  When each $\psi_n$ is c.p.\ then the induced map $\Psi$ is order zero if and only if $
\lim_{n \rightarrow \omega} \|\psi_{n}(x_{n}) \psi_{n}(y_{n}) \| = 0$ whenever $\lim_{n \rightarrow \omega} \|x_{n} y_{n} \| = 0$ for bounded sequences $(x_{n})_{n=1}^{\infty}, (y_{n})_{n=1}^{\infty}$ in $A$. 
Sometimes the map $\Psi$ will only be order zero when restricted to a subalgebra, say $A \subset A_{\omega}$. This is the case if and only if the sequence is asymptotically order zero with respect to $A$ in the sense that $\lim_{n \rightarrow \omega} \|\psi_{n}(x) \psi_{n}(y) \| = 0$ whenever $x,y \in A$ satisfy $xy = 0$; when $A$ is unital we can describe this by asking for (\ref{OrderZeroRelation}) to hold asymptotically, i.e.
\begin{equation}
\lim_{n\rightarrow\omega}\|\psi_n(xy)\psi_n(1)-\psi_n(x)\psi_n(y)\|=0,\quad x,y\in A.
\end{equation}
Note too that when $A$ is nuclear, by the Choi-Effros lifting theorem \cite{CE:Ann} any c.p.c.~ order zero map $A\rightarrow B_\omega$ arises from a sequence of c.p.c.~ maps $A\rightarrow B$ which is asymptotically order zero with respect to $A$.  Sequences of asymptotically order zero maps arise naturally from the approximations in the definition of finite nuclear dimension; indeed, given a system of maps $(F_i,\psi_i,\phi_i)_{i\in I}$ as in Definition \ref{DefND} one can modify the approximation by removing unnecessary components so that the maps $\psi_i$ are asymptotically order zero \cite[Proposition 3.2]{WZ:Adv}.

Sequences of asymptotically order zero maps enjoy a slight variant of the structure theorem using a supporting order zero map in place of the supporting $^{*}$-homomorphism. A crucial point here is that the supporting order zero map can be chosen to take values in the ultrapower $B_\omega$, rather than the multiplier algebra $\mathcal M(\mathrm{C}^*(\psi(A))) \subset (B_{\omega})^{**}$.  

\begin{lemma}\label{SequenceStructureTheorem}
Let $A$ and $B$ be separable unital $\mathrm{C}^*$-algebras, and $(\psi_n)_{n=1}^\infty$ a sequence of c.p.c.~ maps $A\rightarrow B$ inducing a c.p.c.~ map $\Psi:A_\omega\rightarrow B_\omega$.  Let $X$ be a separable $\mathrm{C}^*$-subalgebra of $A_\omega$  with $1_{A_\omega}=1_X$ and suppose that $\Psi|_X:X\rightarrow B_\omega$ is order zero.   Then there exists a sequence $(\pi_n)_{n=1}^\infty$ of c.p.c.~ maps $\pi_n:A\rightarrow \mathrm{C}^*(\psi_n(A))\subseteq B$ such that the induced c.p.c.~ map $\Pi:A_\omega\rightarrow B_\omega$ restricts to an order zero map on $X$ satisfying
\begin{equation}\label{SequenceStructureTheorem.Main}
\Psi(x)=\Pi(x)\Psi(1_X)=\Psi(1_X)\Pi(x),\quad x\in X.
\end{equation}

Suppose additionally that $A$ and $B$ factorise as $A=\widehat{A}\otimes C$ and $B=\widehat{B}\otimes C$, for some separable unital $\mathrm{C}^*$-algebra $C$, and each $\psi_n$ is equal to $\widehat{\psi}_n\otimes\id_C$ for some c.p.c.~ map $\widehat{\psi}_n:\widehat{A}\rightarrow \widehat{B}$.  Then each $\pi_n$ may be chosen of the form $\widehat{\pi}_n\otimes\id_C$ for some c.p.c.~ map $\widehat{\pi}_n:\widehat{A}\rightarrow C^*(\widehat{\psi}_n(\widehat{A}))$.
\end{lemma}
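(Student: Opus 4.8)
The plan is to realise the supporting order zero map $\Pi$ as a diagonal limit of a one-parameter family of maps obtained by combining a \emph{regularised inverse} of $\psi_n(1_A)$ with the order zero functional calculus for $\Psi|_X$. The key point is that the individual $\psi_n$ need not be order zero; only the limit $\Psi|_X$ is, so I would apply the structure theorem solely to $\Psi|_X$. Write $h_n := \psi_n(1_A)\in B$, so that $h:=\Psi(1_X)\in B_\omega$ is represented by $(h_n)_n$; these are positive contractions. For $m\in\mathbb N$ let $r_m\in C_0(0,1]_+$ be given by $r_m(t)=t^{-1}$ for $t\ge 1/m$ and $r_m(t)=m^2t$ for $0\le t\le 1/m$, so $r_m(0)=0$, the function $g_m(t):=t\,r_m(t)$ satisfies $0\le g_m\le 1$, and $\sup_{t\in[0,1]}|t^2r_m(t)-t|\le 1/m$.

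For each $m$ I would define c.p.c.\ maps $\pi_n^{(m)}:A\to B$ by $\pi_n^{(m)}(a)=r_m(h_n)^{1/2}\psi_n(a)\,r_m(h_n)^{1/2}$. Since $r_m(0)=0$ we have $r_m(h_n)\in\mathrm{C}^*(h_n)\subseteq\mathrm{C}^*(\psi_n(A))$, so $\pi_n^{(m)}$ takes values in $\mathrm{C}^*(\psi_n(A))$, and it is contractive because $\pi_n^{(m)}(1_A)=g_m(h_n)$ has norm at most $1$. These induce a c.p.c.\ map $\Pi^{(m)}:A_\omega\to B_\omega$; as continuous functional calculus for functions vanishing at $0$ passes to the ultrapower, $(r_m(h_n)^{1/2})_n$ represents $r_m(h)^{1/2}$, whence $\Pi^{(m)}(x)=r_m(h)^{1/2}\Psi(x)\,r_m(h)^{1/2}$ for $x\in X$. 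Feeding in the structure theorem for $\Psi|_X$ (supporting $^*$-homomorphism $\pi$, with $\Psi(x)=\pi(x)h=h\pi(x)$ and $h$, hence $r_m(h)$, commuting with $\pi(x)$) collapses this to $\Pi^{(m)}(x)=\pi(x)g_m(h)=g_m(\Psi|_X)(x)$. Thus $\Pi^{(m)}|_X$ is \emph{exactly} the c.p.c.\ order zero map $g_m(\Psi|_X)$ from the order zero functional calculus. Moreover $\Pi^{(m)}(x)\,h=\pi(x)\,h^2r_m(h)$, so the estimate on $r_m$ gives $\|\Pi^{(m)}(x)\,h-\Psi(x)\|\le\|x\|/m$, and symmetrically for $h\,\Pi^{(m)}(x)$.

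It then remains to pass from $(\Pi^{(m)})_m$ to a single induced map $\Pi$ that is order zero on $X$ and satisfies $\Pi(x)h=\Psi(x)=h\Pi(x)$. I would fix a countable dense subset $\{x_k\}$ of the unit ball of $X$ with fixed lifts $(x_k^{(n)})_n$ in $A$ (lifting $1_X$ by $(1_A)_n$) and apply Kirchberg's $\eps$-test over the sets $X_n$ of c.p.c.\ maps $A\to\mathrm{C}^*(\psi_n(A))$, with test functions evaluated at $(\sigma_n)_n$,
\[
f_{i,j}\big((\sigma_n)\big)=\lim_{n\to\omega}\big\|\sigma_n(x_i^{(n)})\sigma_n(x_j^{(n)})-\sigma_n(x_i^{(n)}x_j^{(n)})\sigma_n(1_A)\big\|,
\]
together with $g_k((\sigma_n))=\lim_{n\to\omega}\|\sigma_n(x_k^{(n)})h_n-\psi_n(x_k^{(n)})\|$ and its adjoint analogue. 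For any finite subcollection and any $\eps>0$, the constant sequence $(\pi_n^{(M)})_n$ with $M\ge 1/\eps$ makes all of them at most $\eps$: the $f_{i,j}$ vanish since $\Pi^{(M)}|_X$ is genuinely order zero, while the $g_k$ are bounded by $1/M$. The $\eps$-test then yields $\pi_n:A\to\mathrm{C}^*(\psi_n(A))$ with all test functions equal to $0$; the induced c.p.c.\ map $\Pi$ satisfies the order zero relation (\ref{OrderZeroRelation}) on $\{x_k\}$, hence on all of $X$ by boundedness, and satisfies $\Pi(x)\Psi(1_X)=\Psi(x)=\Psi(1_X)\Pi(x)$ for all $x\in X$. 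Finally, in the factorised case $\psi_n=\widehat\psi_n\otimes\id_C$ one has $h_n=\widehat\psi_n(1_{\widehat A})\otimes 1_C$, so $r_m(h_n)=r_m(\widehat\psi_n(1_{\widehat A}))\otimes 1_C$ and every $\pi_n^{(m)}$ already has the form $\widehat\pi_n^{(m)}\otimes\id_C$ with $\widehat\pi_n^{(m)}(\widehat a)=r_m(\widehat\psi_n(1_{\widehat A}))^{1/2}\widehat\psi_n(\widehat a)\,r_m(\widehat\psi_n(1_{\widehat A}))^{1/2}\in\mathrm{C}^*(\widehat\psi_n(\widehat A))$; as the diagonal selection only picks an index $m=m(n)$ for each $n$, the resulting $\pi_n=\widehat\pi_n^{(m(n))}\otimes\id_C$ retains this form.

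I expect the reindexing to be the main obstacle: each $\Pi^{(m)}$ is only approximately supporting, and being order zero is a condition \emph{in the limit over $n$}, so one cannot simply let $m\to\infty$ together with $n$. Kirchberg's $\eps$-test, which relies on the separability of $X$, is exactly what legitimises the diagonalisation. The conceptual heart is the identification $\Pi^{(m)}|_X=g_m(\Psi|_X)$, which guarantees that each approximant is \emph{exactly} order zero rather than merely asymptotically so, and the elementary norm estimate $\|h^2r_m(h)-h\|\le 1/m$, which is what forces the regularised inverse to recover the supporting relation in the limit.
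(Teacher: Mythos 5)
Your proposal is correct and follows essentially the same route as the paper: conjugating $\psi_n$ by a regularised inverse of $\psi_n(1_A)$, identifying the resulting map on $X$ with the order zero functional calculus $g_m(\Psi|_X)$ via the structure theorem for $\Psi|_X$, and then diagonalising over $m$ and $n$ (with the factorised case handled by noting the conjugating elements have the form $r_m(\widehat\psi_n(1_{\widehat A}))^{1/2}\otimes 1_C$). The only differences are organisational: the paper performs the diagonalisation by hand, choosing $m_n$ maximal subject to explicit tolerance inequalities, and recovers the supporting relation $\Psi(x)=\Pi(x)\Psi(1_X)$ through an auxiliary unit $e$ represented by $(g_{m_n}(\psi_n(1_A)))_{n}$, whereas you package both the order zero relation and the supporting relation as test functions for Kirchberg's $\eps$-test, using the quantitative estimate $\|h^2r_m(h)-h\|\le 1/m$.
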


\begin{proof}
For $m\in\mathbb N$, define piecewise linear functions $g_m\in C_0(0,1]$ by
\begin{equation}
g_m(t)=\begin{cases}1,&t\geq 1/m;\\2mt-1,&1/(2m)\leq t\leq 1/m;\\0,&0\leq t\leq 1/(2m).\end{cases}
\end{equation}
Define $h_m\in C_0(0,1]_+$ to satisfy $h_m(t)t=g_m(t)$ for all $t\in (0,1]$.  The c.p.c.~ order zero map $g_m(\Psi|_X):X\rightarrow  B_\omega \cap \Psi(1_{X})'$ takes the form
\begin{equation}
g_m(\Psi|_X)(x)=h_m(\Psi(1_X))^{1/2}\Psi(x)h_m(\Psi(1_X))^{1/2},\quad x\in X,
\end{equation}
since if $\rho$ is the supporting homomorphism of $\Psi|_X$, then
\begin{align}
g_m(\Psi|_X)(\cdot)&=\rho(\cdot)g_m(\Psi|_X(1_X))\nonumber\\&=\rho(\cdot)\Psi|_X(1_X)h_m(\Psi|_X(1_X))=\Psi|_X(\cdot)h_m(\Psi|_X(1_X)).
\end{align}

Fix a countable dense sequence $(x^{(i)})_{i=1}^\infty$ in the unit ball of $X$ and lift each $x^{(i)}$ to a sequence $(x^{(i)}_n)_{n=1}^\infty$ of contractions in $A$.   Since $(h_m(\psi_n(1_A))^{1/2})_{n=1}^\infty$ is a lift of $h_m(\Psi(1_X))^{1/2}$ the map $g_m(\Psi|_X)$ is (the restriction to $X$ of the c.p.c.~ map) induced by the sequence of c.p.c.~ maps $(\widetilde{\psi}_{m,n})_{n=1}^\infty$ from $A$ to $B$ given by
\begin{equation}
\widetilde{\psi}_{m,n}(a)=h_m(\psi_n(1_A))^{1/2}\psi_n(a)h_m(\psi_n(1_A))^{1/2},\quad a\in A.
\end{equation}
It then follows that for each $i,j,m\in\mathbb N$,
\begin{equation}\label{SequenceStructureTheorem.2}
\lim_{n\rightarrow\omega}\|\widetilde{\psi}_{m,n}(x^{(i)}_nx^{(j)}_n)\widetilde{\psi}_{m,n}(1_A)-\widetilde{\psi}_{m,n}(x^{(i)}_n)\widetilde{\psi}_{m,n}(x^{(j)}_n)\|=0.
\end{equation}
As $g_m(\Psi|_X)(X)\subset B_\omega \cap \Psi(1_X)'$, for all $i,m\in\mathbb N$ we also have
\begin{equation}\label{SequenceStructureTheorem.3}
\lim_{n\rightarrow\omega}\|\widetilde{\psi}_{m,n}(x^{(i)}_n)\psi_n(1_A)-\psi_n(1_A)\widetilde{\psi}_{m,n}(x^{(i)}_n)\|=0.
\end{equation}

For each $n\in\mathbb N$, let $m_n$ denote the maximum value of $m\in\{1,2,\dots,n\}$ such that
\begin{equation}\label{SequenceStructureTheorem.1}
\|\widetilde{\psi}_{m,n}(x^{(i)}_nx^{(j)}_n)\widetilde{\psi}_{m,n}(1_A)-\widetilde{\psi}_{m,n}(x^{(i)}_n)\widetilde{\psi}_{m,n}(x^{(j)}_n)\|\leq\frac{2}{m},\quad 1\leq i,j\leq m,
\end{equation}
and
\begin{equation}\label{SequenceStructureTheorem.4}
\|\widetilde{\psi}_{m,n}(x^{(i)}_n)\psi_n(1_A)-\psi_n(1_A)\widetilde{\psi}_{m,n}(x^{(i)}_n)\|\leq \frac{2}{m},\quad 1\leq i\leq m.
\end{equation}
The limits in (\ref{SequenceStructureTheorem.2}) and (\ref{SequenceStructureTheorem.3}) demonstrate that $\lim_{n\rightarrow\omega}m_n=\infty$.

Define $\pi_n=\widetilde{\psi}_{m_n,n}:A\rightarrow B$.  As $\lim_{n\rightarrow\omega}m_n=\infty$, (\ref{SequenceStructureTheorem.1}) and the order zero relation (\ref{OrderZeroRelation}) show that the sequence $(\pi_n)_{n=1}^\infty$ induces a c.p.c.~ order zero map $\Pi:X\rightarrow B_\omega$ and (\ref{SequenceStructureTheorem.4}) shows that $\Pi(X)\subset\Psi(1_X)'$.  Denote by $e\in B_\omega$ the contraction represented by the sequence $(g_{m_n}(\psi_n(1_A))_{n=1}^\infty$.  As $\|g_m(\psi_n(1_A))\psi_n(1_A)-\psi_n(1_A)\|\leq 2/m$ for all $n$ and $\lim_{n\rightarrow\omega}m_n=\infty$. It follows that $e\Psi(1_X)=\Psi(1_X)=\Psi(1_X)e$. Since $\Psi(X)$ lies in the hereditary subalgebra $\overline{\Psi(1_X)B_\omega\Psi(1_X)}$, we have 
\begin{equation}\label{SequenceStructureTheorem.5}
e\Psi(x)=\Psi(x)e=\Psi(x),\quad x\in X.
\end{equation}
Now, given $x\in X$ with lift $(x_n)_{n=1}^\infty$, we have
\begin{equation}
\psi_n(1_A)^{1/2}\pi_n(x_n)\psi_n(1_A)^{1/2}=g_{m_n}(\psi_{n}(1_A))^{1/2}\psi_n(x_n)g_{m_n}(\psi_{n}(1_A))^{1/2},
\end{equation}
so (\ref{SequenceStructureTheorem.5}) gives
\begin{equation}
\Psi(1_X)^{1/2}\Pi(x)\Psi(1_X)^{1/2}=\Psi(x).
\end{equation}
As $\Pi(X)$ commutes with $\Psi(1_X)$, this establishes (\ref{SequenceStructureTheorem.Main}).

Now suppose that $A$, $B$ and each $\psi_n$ factorise as described in the second paragraph of the lemma.  Then $h_m(\psi_n(1_A))^{1/2}=h_m(\widehat{\psi}_n(1_{\widehat{A}}))^{1/2}\otimes 1_C$, and so each $\widetilde{\psi}_{m,n}$ factorises as $\check{\psi}_{m,n}\otimes\id_C$, with $\check{\psi}_{m,n}:\widehat{A}\rightarrow \mathrm{C}^*(\widehat{\psi}_n(\widehat{A}))$ given by
\begin{equation}
\check{\psi}_{m,n}(a)=h_m(\widehat{\psi}_n(1_{\widehat{A}}))^{1/2}\widehat{\psi}_n(a)h_m(\widehat{\psi}_n(1_{\widehat{A}}))^{1/2},\quad a\in \widehat{A}.
\end{equation}
Thus each $\pi_n$ also enjoys the specified factorisation.
\end{proof}

The functional calculus for order zero maps $\Psi:X\rightarrow B_\omega$ arising as in Lemma \ref{SequenceStructureTheorem} can be recaptured using the supporting order zero map $\Pi$: for $f\in C_0(0,1]_+$ and $x\in X$, we have $f(\Psi)(x)=\Pi(x)f(\Psi(1_X))$. When $f$ is a polynomial with $f(0)=0$,  then $f(t) = t \cdot g(t)$ for some polynomial $g$ and we have $f(\Psi)(x) = \bar{\Psi}(x) f(\Psi(1_{X})) = \bar{\Psi}(x) \Psi(1_{X}) g(\Psi(1_{X})) = \Psi(x) g(\Psi(1_{X})) = \Pi(x) \Psi(1_{X}) g(\Psi(1_{X})) = \Pi(x) f(\Psi(1_{X}))$, where $\bar{\Psi}$ denotes the supporting $^{*}$-homomorphism of $\Psi$. In general the assertion follows by approximating $f$ uniformly by polynomials. In particular, this shows that $f(\Psi)$ is induced by a sequence of asymptotically order zero maps, whose ranges lie in the same $\mathrm{C}^*$-algebras as the maps used to obtain $\Psi$.
\begin{lemma}\label{SequenceFunctionalCalculus}
Let $A$ and $B$ be separable unital $\mathrm{C}^*$-algebras and $(\psi_n)_{n=1}^\infty$ a sequence of c.p.c.~ maps $A\rightarrow B$ inducing a c.p.c.~ map $\Psi:A_\omega\rightarrow B_\omega$.  Let $X$ be a separable $\mathrm{C}^*$-subalgebra of $A_\omega$ with $1_X=1_{A_\omega}$, and suppose that $\Psi|_X:X\rightarrow B_\omega$ is order zero.  Given any function $f\in C_0(0,1]_+$, the order zero map $f(\Psi|_X):X\rightarrow B_\omega$ is (the restriction to $X$ of the c.p.c.~ map) induced by a sequence $(\widetilde{\psi}_n)_{n=1}^\infty$ of c.p.\ maps $\widetilde{\psi}_n:A\rightarrow C^*(\psi_n(A))\subseteq B$.

Suppose additionally that $A$ and $B$ factorise as $A=\widehat{A}\otimes C$ and $B=\widehat{B}\otimes C$, for some separable unital $\mathrm{C}^*$-algebra $C$, and each $\psi_n$ is equal to $\widehat{\psi}_n\otimes\id_C$ for some c.p.c.~ map $\widehat{\psi}_n:\widehat{A}\rightarrow \widehat{B}$.  Then each $\widetilde{\psi}_n:A\rightarrow B$ may be chosen of the form $\check{\psi}_n\otimes\id_C$ for some c.p.c.~ map $\check{\psi}_n:\widehat{A}\rightarrow C^*(\widehat{\psi}_n(\widehat{A}))$.
\end{lemma}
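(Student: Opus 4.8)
The plan is to realise $f(\Psi|_X)$ by symmetrically compressing the supporting maps produced in Lemma \ref{SequenceStructureTheorem}. First I would apply that lemma to the sequence $(\psi_n)_{n=1}^\infty$ and the subalgebra $X$, obtaining a sequence $(\pi_n)_{n=1}^\infty$ of c.p.c.\ maps $\pi_n:A\to C^*(\psi_n(A))\subseteq B$ whose induced map $\Pi:A_\omega\to B_\omega$ restricts to an order zero map on $X$ with $\Psi(x)=\Pi(x)\Psi(1_X)=\Psi(1_X)\Pi(x)$ for $x\in X$, and with $\Pi(X)\subseteq\Psi(1_X)'$. Since $1_X=1_{A_\omega}$, the positive contraction $\Psi(1_X)$ is represented by $(\psi_n(1_A))_{n=1}^\infty$, so for any $f\in C_0(0,1]_+$ the element $f(\Psi(1_X))$ is represented by $(f(\psi_n(1_A)))_{n=1}^\infty$.

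Next I would invoke the functional-calculus identity established in the discussion preceding the lemma, namely $f(\Psi|_X)(x)=\Pi(x)\,f(\Psi(1_X))$ for all $x\in X$; recall this holds first for polynomials $f$ with $f(0)=0$ (writing $f(t)=tg(t)$ and combining $\Psi(x)=\Pi(x)\Psi(1_X)$ with the order zero functional calculus for $\Psi|_X$), and then for general $f\in C_0(0,1]_+$ by uniform approximation.

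The main construction is to set $\widetilde\psi_n(a)=f(\psi_n(1_A))^{1/2}\,\pi_n(a)\,f(\psi_n(1_A))^{1/2}$ for $a\in A$. Each $\widetilde\psi_n$ is a compression of the c.p.\ map $\pi_n$ by the fixed positive element $f(\psi_n(1_A))^{1/2}\in C^*(\psi_n(1_A))\subseteq C^*(\psi_n(A))$, hence is completely positive with range in $C^*(\psi_n(A))$, and the sequence is uniformly bounded by $\|f\|_\infty$ (so contractive when $f$ is). It then remains to check that the induced map agrees with $f(\Psi|_X)$ on $X$: for $x\in X$ with lift $(x_n)_{n=1}^\infty$, the induced map sends $x$ to $f(\Psi(1_X))^{1/2}\Pi(x)f(\Psi(1_X))^{1/2}$, and since $\Pi(X)\subseteq\Psi(1_X)'$ commutes with $C^*(\Psi(1_X))\ni f(\Psi(1_X))$, this equals $\Pi(x)f(\Psi(1_X))=f(\Psi|_X)(x)$. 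The factorisation addendum is then immediate: feeding the corresponding addendum of Lemma \ref{SequenceStructureTheorem} (giving $\pi_n=\widehat\pi_n\otimes\id_C$) into the same formula, and using $f(\psi_n(1_A))^{1/2}=f(\widehat\psi_n(1_{\widehat A}))^{1/2}\otimes 1_C$ (valid since $f(0)=0$), shows $\widetilde\psi_n=\check\psi_n\otimes\id_C$ with $\check\psi_n(a)=f(\widehat\psi_n(1_{\widehat A}))^{1/2}\,\widehat\pi_n(a)\,f(\widehat\psi_n(1_{\widehat A}))^{1/2}$ taking values in $C^*(\widehat\psi_n(\widehat A))$.

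I expect the only delicate point to be the role of the symmetrisation: the ``honest'' candidate $a\mapsto\pi_n(a)f(\psi_n(1_A))$ reproduces $\Pi(x)f(\Psi(1_X))$ on $X$ but is not completely positive, whereas the symmetric compression $f(\psi_n(1_A))^{1/2}\pi_n(\cdot)f(\psi_n(1_A))^{1/2}$ is manifestly c.p.\ and, crucially, has the \emph{same} image in $B_\omega$ on $X$ precisely because of the commutation relation $\Pi(X)\subseteq\Psi(1_X)'$. This is exactly the extra information recorded in Lemma \ref{SequenceStructureTheorem} beyond the bare factorisation $\Psi(x)=\Pi(x)\Psi(1_X)$, so the entire lemma reduces to assembling that structural input with the functional-calculus identity already in hand.
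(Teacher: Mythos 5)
Your proof is correct and takes essentially the same route as the paper: the paper's own proof consists of the single observation that $f(\Psi|_X)(x)=\Pi(x)f(\Psi(1_X))$ with $\Pi$ the supporting order zero map from Lemma \ref{SequenceStructureTheorem}, and your symmetrised compression $\widetilde\psi_n(a)=f(\psi_n(1_A))^{1/2}\pi_n(a)f(\psi_n(1_A))^{1/2}$ is exactly the completely positive lift left implicit there (mirroring the $h_m$-compressions used inside the proof of that lemma). Your remark that the one-sided candidate $\pi_n(\cdot)f(\psi_n(1_A))$ fails to be c.p.\ while the symmetric version agrees with it in $B_\omega$ on $X$ via $\Pi(X)\subseteq\Psi(1_X)'$ is precisely the point the paper is relying on.
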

\begin{proof}
This follows from the expression $f(\Psi|_X)(x)=\Pi(x)f(\Psi(1_X))$ for $x\in X$, where $\Pi$ is the supporting order zero map of Lemma \ref{SequenceStructureTheorem} induced by c.p.c.~ maps $\pi_n:A\rightarrow \mathrm{C}^*(\psi_n(A))$.
\end{proof}

We end the section by recording a fact about polar decompositions in ultrapowers. The proof that unitary polar decompositions exist given in \cite{L:Book} is stated for quotients $\prod B_n/\sum B_n$, but applies without modification to ultrapowers. It then follows immediately that $B_\omega$ has stable rank one ($x=u|x|$ can be approximated by invertibles of the form $u(|x|+\eps1_{B_\omega})$).
\begin{lemma}[cf.\ {\cite[Lemma 19.2.2(1)]{L:Book}}]\label{UnitaryPolar}
Let $B$ be a separable unital $\mathrm{C}^*$-algebra with stable rank one. Then for every $x\in B_\omega$, there exists a unitary $u\in B_\omega$ with $x=u|x|$ and so $B$ has stable rank one.
\end{lemma}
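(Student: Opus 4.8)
The plan is to reduce the problem to the genuine unitary polar decomposition available for \emph{invertible} elements of $B$, and then transport this to $B_\omega$ by a limiting argument; stable rank one is exactly the hypothesis that lets us replace an arbitrary element by an invertible one, and hence a partial isometry by a genuine unitary.

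First I would lift $x$ to a bounded sequence $(x_n)_{n=1}^\infty$ in $B$. Since $B$ is unital with stable rank one, its invertibles are norm-dense, so for each $n$ I can choose an invertible $y_n\in B$ with $\|y_n-x_n\|<1/n$; the sequence $(y_n)_{n=1}^\infty$ is then bounded and also represents $x$. Each $y_n$ being invertible, $|y_n|=(y_n^*y_n)^{1/2}$ is a positive invertible element and $u_n:=y_n|y_n|^{-1}$ is a genuine unitary in $B$ with $y_n=u_n|y_n|$. I would then set $u\in B_\omega$ to be the element represented by $(u_n)_{n=1}^\infty$; since each $u_n$ is unitary, $u^*u=uu^*=1_{B_\omega}$, so $u$ is unitary.

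The key step is to identify $(|y_n|)_{n=1}^\infty$ as a lift of $|x|$: because $(y_n)$ represents $x$, the sequence $(y_n^*y_n)$ represents $x^*x$, and by norm-continuity of the functional calculus for $t\mapsto t^{1/2}$ (uniform continuity on a bounded interval containing all the relevant spectra), $(|y_n|)=((y_n^*y_n)^{1/2})$ represents $(x^*x)^{1/2}=|x|$. Consequently $u|x|$ is represented by $(u_n|y_n|)=(y_n)$, which represents $x$; hence $x=u|x|$, proving the unitary polar decomposition. The consequence that $B_\omega$ has stable rank one then follows immediately: $B_\omega$ is unital, and for any $x\in B_\omega$ and $\eps>0$ the element $u(|x|+\eps 1_{B_\omega})$ is invertible (a product of the unitary $u$ with the strictly positive, hence invertible, element $|x|+\eps 1_{B_\omega}$) and satisfies $\|x-u(|x|+\eps 1_{B_\omega})\|=\eps\|u\|=\eps$, so the invertibles are dense.

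I expect the main obstacle to be the one genuinely analytic point, namely verifying that $(|y_n|)$ lifts $|x|$; everything else is formal. This rests on the norm-continuity of the square root on positive elements together with keeping $\|y_n\|$ uniformly bounded so that the spectra lie in a common interval on which $t\mapsto t^{1/2}$ is uniformly continuous. It is worth emphasizing that one cannot simply polar-decompose $x$ directly inside $B_\omega$: a partial isometry produced from a non-invertible element need not be liftable to a unitary, and it is precisely the density of invertibles in $B$ (stable rank one) that upgrades each level-$n$ partial isometry to a genuine unitary before passing to the ultrapower.
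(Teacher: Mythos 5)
Your argument is correct and is essentially the proof the paper relies on: the paper simply cites Loring's Lemma 19.2.2(1), whose proof is exactly this approximate-by-invertibles argument (lift, perturb each coordinate to an invertible $y_n$ using density of invertibles, take the genuine unitary polar decomposition $y_n=u_n|y_n|$ levelwise, and use continuity of the square root to see that $(|y_n|)_n$ lifts $|x|$), followed by the same observation that $u(|x|+\eps 1_{B_\omega})$ witnesses density of invertibles in $B_\omega$. No gaps.
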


\section{Tracially large UHF embeddings of cones}\label{QD}

\noindent
In this section we provide the tracially large order zero maps $A\rightarrow U_\omega$ used to obtain the finite dimensional algebras in the finite nuclear dimension factorisations of our main result. The proof works when $U$ is replaced by any separable, unital, nuclear and stably finite $\mathrm{C}^*$-algebra.

\begin{notation}
Let $A$ be a separable unital $\mathrm{C}^*$-algebra with a fixed tracial state $\tau$, and let $\tau_{A,\omega}$ be the induced tracial state on $A_\omega$ defined on a representative sequence $(x_n)_{n=1}^\infty$ by $\lim_{n\rightarrow\omega}\tau(x_n)$.  Following \cite{KR:Crelle}, we define the trace kernel ideal of $\tau_{A,\omega}$ as  $J_{A,\tau} := \{ (a_{n})_{n} \in A_{\omega} \mid \lim_{n \to \omega}  \tau (a_{n}^{*}a) \} \lhd A_\omega$. (We do not keep track of the ultrafilter in our notation for the ideal. However, this should not cause confusion; in fact, we think of the free ultrafilter as being fixed throughout the paper.)
\end{notation}

We use Kirchberg's notion of a $\sigma$-ideal from \cite{K:Abel}, working with the formulation of  \cite[Definition 4.4]{KR:Crelle}: an ideal $I$ in a $\mathrm{C}^*$-algebra $D$ is a \emph{$\sigma$-ideal} if for every separable $\mathrm{C}^*$-subalgebra $C$ of $D$ there exists a positive contraction $e\in I \cap C'$ such that $ec=c$ for all $c\in I \cap C$.  A key observation of Kirchberg and R\o{}rdam is that $J_{A,\tau}$ is a $\sigma$-ideal in $A_\omega$ whenever $A$ is unital, and $\tau$ is a tracial state on $A$, \cite[Proposition 4.6 and Remark 4.7]{KR:Crelle}. (This is a special case of the results of \cite[Section 4]{KR:Crelle}; the trace-kernel ideal $J_A$ obtained from those sequences which converge to zero uniformly on all traces is also a $\sigma$-ideal in $A_\omega$, and so too are the ideals $(J_A\cap A')\lhd (A_\omega \cap A')$ and $(J_{A,\tau}\cap A')\lhd(A_\omega\cap A')$ in the central sequence algebras).  The proof below is based on the argument for surjectivity given at the end of Section 4 of \cite{KR:Crelle}. In forthcoming work we will provide some uniqueness statements for such maps up to a finitely coloured decomposition as a sum of order zero maps arising from conjugation by a suitable sequence of contractions.
 \begin{proposition}\label{EmbeddingCone}
 Let $A$ and $B$ be separable unital and nuclear $\mathrm{C}^*$-algebras without elementary quotients, and with extremal traces $\tau_A$ and $\tau_B$ respectively.  Then there exists a c.p.c.~ order zero map $\Psi:A\rightarrow B_\omega$ with $1_{B_\omega}-\Psi(1_A)\in J_{B,\tau_B}$ and $\tau_{B,\omega}(\Psi(x))=\tau_A(x)$ for all $x\in A$.
\end{proposition}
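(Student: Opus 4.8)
The plan is to obtain $\Psi$ as a lift, through the quotient map $B_\omega \to B_\omega/J_{B,\tau_B}$, of a genuinely \emph{unital} and \emph{trace-preserving} embedding of $A$ into the quotient, which I will identify with a von Neumann algebra ultrapower. The permitted non-unitality (defect in $J_{B,\tau_B}$) and the order zero property will then come essentially for free from the $\sigma$-ideal structure of $J_{B,\tau_B}$, replacing Voiculescu's quasidiagonality of the cone by Connes' theorem that injective finite factors are hyperfinite.

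First I would pass to von Neumann algebras. Let $M=\pi_{\tau_B}(B)''$ be the weak closure in the GNS representation of $\tau_B$. Nuclearity of $B$ makes $M$ injective, extremality of $\tau_B$ makes $M$ a factor, and the absence of elementary quotients rules out $M\cong M_n$, so $M$ is the hyperfinite $\mathrm{II}_1$ factor $\mathcal R$ by \cite{C:Ann}. The trace $\tau_{B,\omega}$ vanishes on $J_{B,\tau_B}$ by Cauchy--Schwarz, hence descends to the quotient; and the assignment $(b_n)_n \mapsto (\pi_{\tau_B}(b_n))_n$ induces an isomorphism $B_\omega/J_{B,\tau_B}\cong M^\omega$ onto the tracial ultrapower of $M$. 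This map is injective because $J_{B,\tau_B}$ is exactly the $\|\cdot\|_{2,\tau_B}$-kernel, and surjective by a Kaplansky density argument; this identification, under which $\tau_{B,\omega}$ becomes the canonical trace $\tau_{M^\omega}$, is the content of the surjectivity argument at the end of Section~4 of \cite{KR:Crelle}.

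Next I would build the map into the quotient. Running the same analysis for $A$, the algebra $N=\pi_{\tau_A}(A)''$ is again $\mathcal R$, and $\pi_{\tau_A}\colon A\to N$ is unital and trace-preserving. Since $N\cong \mathcal R\cong M$, embedding $N$ as constant sequences in $M^\omega$ yields a unital, trace-preserving $^*$-homomorphism $\bar\Psi\colon A\to M^\omega\cong B_\omega/J_{B,\tau_B}$. Being a $^*$-homomorphism, $\bar\Psi$ is in particular c.p.c.\ order zero.

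Finally I would lift $\bar\Psi$ to a c.p.c.\ order zero map $\Psi\colon A\to B_\omega$, and \emph{this is the step I expect to be the main obstacle}, as it is the only place genuinely exploiting that $J_{B,\tau_B}$ is a $\sigma$-ideal. The mechanism is the projectivity of cones relative to $\sigma$-ideals: for $A$ separable and nuclear, a c.p.c.\ order zero map into a quotient by a $\sigma$-ideal lifts to a c.p.c.\ order zero map. Concretely, I would start from a Choi--Effros c.p.c.\ lift $\sigma\colon A\to B_\omega$ of $\bar\Psi$ (available by nuclearity, \cite{CE:Ann}); the order zero defects $\sigma(a)\sigma(a')$ with $aa'=0$ lie in $J_{B,\tau_B}$, so applying the $\sigma$-ideal property to the separable subalgebra generated by $\sigma(A)$ produces a positive contraction $e\in J_{B,\tau_B}$ commuting with $\sigma(A)$ and acting as a unit on these defects; compressing by $1-e$ and passing to the cone picture $C_0(0,1]\otimes A$ absorbs the defects and corrects $\sigma$ to an honest order zero map $\Psi$. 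The difficulty is reconciling \emph{exact} order zero with the non-projectivity of $C_0(0,1]\otimes A$, which the $\sigma$-ideal correction resolves. Since $\bar\Psi$ is unital, any such lift automatically satisfies $1_{B_\omega}-\Psi(1_A)\in J_{B,\tau_B}$; and since $\tau_{B,\omega}$ factors through the quotient as $\tau_{M^\omega}$, trace-preservation of $\bar\Psi$ gives $\tau_{B,\omega}(\Psi(x))=\tau_{M^\omega}(\bar\Psi(x))=\tau_A(x)$ for all $x\in A$.
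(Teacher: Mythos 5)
Your proposal is correct and follows essentially the same route as the paper's proof: identify $B_\omega/J_{B,\tau_B}$ with the tracial ultrapower of the hyperfinite II$_1$ factor via Connes' theorem and Kaplansky density (as in \cite{KR:Crelle}), obtain a unital trace-preserving $^*$-homomorphism $A\to B_\omega/J_{B,\tau_B}$, lift it to a c.p.c.\ map by Choi--Effros, and repair the order zero defect by compressing with $1-e$ for a suitable positive contraction $e$ supplied by the $\sigma$-ideal property of $J_{B,\tau_B}$. The only cosmetic difference is that the paper routes the embedding through $A_\omega/J_{A,\tau_A}\cong R^\omega$ rather than directly through $\pi_{\tau_A}(A)''$, which amounts to the same map.
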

\begin{proof}
The strong operator closures of $A$ and $B$ in the GNS representations corresponding to $\tau_A$ and $\tau_B$ are injective II$_1$ factors, so by Connes' uniqueness theorem \cite{C:Ann}, both these strong closures are isomorphic to the hyperfinite II$_1$ factor $R$.  Let $R^\omega$ be the von Neumann algebra ultrapower, namely the quotient of $\ell^\infty(R)$ by those sequences $(x_n)_{n=1}^\infty$ with $\lim_{n\rightarrow\omega}\tau_R(x_n^*x_n)=0$.  By Kaplansky's density theorem, the canonical emebddings $A_\omega/J_{A,\tau_A}\rightarrow R^\omega$ and $B_\omega/J_{B,\tau_B}\rightarrow R^\omega$ are surjective, so give a surjective $^*$-isomorphism $\widetilde{\Psi}:A_\omega/J_{A,\tau_A}\rightarrow B_\omega/J_{B,\tau_B}$. Let $\pi:A\rightarrow A_\omega/J_{A,\tau_A}$ be the canonical unital $^*$-homomorphism. Applying the Choi-Effros lifting theorem \cite{CE:Ann}, we obtain a unital c.p.c.~ map $\widehat{\Psi}:A\rightarrow B_\omega$ lifting $\widetilde{\Psi}\circ\pi$, i.e.~ $\widetilde{\Psi}\circ\pi=q\circ\widehat{\Psi}$, where $q:B_\omega\rightarrow B_\omega/J_{B,\tau_B}$ is the quotient map.  As $J_{B,\tau_B}$ is a $\sigma$-ideal in $B_\omega$, there is a positive contraction $e\in J_{B,\tau_B}\cap \mathrm{C}^*(1_{B_{\omega}}, \widehat{\Psi}(A))'$ with $ec=c$ for all $c\in  J_{B,\tau_B} \cap \mathrm{C}^*(1_{B_{\omega}}, \widehat{\Psi}(A))$.  

Define a c.p.c.~ map $\Psi:A\rightarrow B_\omega$ by $\Psi(a)=(1_{B_\omega}-e)\widehat{\Psi}(x)(1_{B_\omega}-e)$ for $x\in A$.  We have $1_{B_\omega}-\Psi(1_A)=  1_{B_\omega}-(1_{B_\omega}-e)^{2}=2e - e^{2}\in J_{B,\tau_B}$ and
\begin{equation}
\tau_{B,\omega}(\Psi(x))=\tau_{B,\omega}(\widehat{\Psi}(x))=\tau_{B,\omega}(\widetilde{\Psi}\circ\pi(x))=\tau_{A,\omega}\circ\pi(x)=\tau_A(x),\quad x\in A;
\end{equation}
for the third equation note that since $R^{\omega}$ is a II$_1$ factor (see \cite[Lemma A.4.2]{SS:Book}) it has a unique tracial state (any trace on a II$_1$ factor is automatically normal \cite[Proposition V.2.5]{T:1}), and so we have $\tau_{B,\omega}(\widetilde{\Psi}(x)) = \tau_{A,\omega}(x)$ for any $x \in A_{\omega}/J_{A,\tau_{A}}$.

Finally, if $x,y\in A_+$ satisfy $xy=0$, then $\widehat{\Psi}(x)\widehat{\Psi}(y)\in \ker q=J_{B,\tau_B}$, and hence $(1_{B_\omega}-e)\widehat{\Psi}(x)\widehat{\Psi}(y)=0$. Since $(1_{B_\omega}-e)$ commutes with $\widehat{\Psi}(A)$, we have
\begin{align}
\Psi(x)\Psi(y)&=(1_{B_\omega}-e)\widehat{\Psi}(x)(1_{B_\omega}-e)^2\widehat{\Psi}(y)(1_{B_\omega}-e)\nonumber\\
&=(1_{B_\omega}-e)^{2}\widehat{\Psi}(x)\widehat{\Psi}(y)(1_{B_\omega}-e)^2=0,
\end{align}
showing that $\Psi$ is order zero, as required.
\end{proof}
One can prove more. Since $J_{B,\tau_B} \cap B'$ is a $\sigma$-ideal in $B_\omega \cap B'$, starting with a trace preserving embedding $A\rightarrow R^\omega\cap R'$ (which is easily obtained from the isomorphism of $R$ with its infinite tensor product), and using the fact that $(B_\omega \cap B')/(J_{B,\tau_B} \cap B')\cong R^\omega\cap R'$ (\cite[Lemma 2.1]{S:Preprint} when $B$ is nuclear and \cite[Theorem 3.3]{KR:Crelle} for general $B$), the same proof allows us to insist that the order zero map $\Psi$ maps into the central sequence algebra $B_\omega \cap B'$.

Note too that the previous proposition recaptures Voiculescu's result \cite{V:Duke} that the cone over a $\mathrm{C}^*$-algebra $A$ is quasidiagonal in the special case when $A$ is unital and nuclear without elementary quotients, and has a separating family of tracial states (so in particular when $A$ is also simple and stably finite by the work of Blackadar, Handelman \cite{H:MJM,BH:JFA} and Haagerup \cite{H:QTrace}). 
\begin{corollary}
Let $A$ be a separable, unital and nuclear $\mathrm{C}^*$-algebra without elementary quotients, and with the property that for every positive element $x\in A$ there is a tracial state $\tau$ on $A$ with $\tau(x)\neq 0$. Then $C_0(0,1]\otimes A$ is quasidiagonal.
\end{corollary}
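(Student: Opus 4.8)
The plan is to realise the cone $C_0(0,1]\otimes A$ faithfully inside the ultrapower $U_\omega$ of a UHF algebra $U$ and then to transfer quasidiagonality from $U$. Fix a UHF algebra $U$ with its unique (hence extremal and faithful) trace $\tau_U$; it is separable, unital, nuclear and has no elementary quotients, so Proposition~\ref{EmbeddingCone} applies with $B=U$. Since $A$ is separable and its traces separate the nonzero positive elements, so do its extremal traces: a positive element annihilated by every extreme trace is annihilated by every trace, as each trace is the barycentre of a boundary probability measure. I would therefore choose a countable separating family $(\tau_m)_{m\in\mathbb N}$ of extremal tracial states of $A$, and for each $m$ apply Proposition~\ref{EmbeddingCone} to obtain a c.p.c.\ order zero map $\Psi_m\colon A\to U_\omega$ with $1_{U_\omega}-\Psi_m(1_A)\in J_{U,\tau_U}$ and $\tau_{U,\omega}\circ\Psi_m=\tau_m$.

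Next I would convert these into $^*$-homomorphisms on the cone and compute their traces. For $s\in(0,1]$ the rescaled map $s\Psi_m$ is again c.p.c.\ order zero, so by the order zero/cone duality of \cite{WZ:MJM} it corresponds to a $^*$-homomorphism $\rho_{m,s}\colon C_0(0,1]\otimes A\to U_\omega$, determined by $\rho_{m,s}(\mathrm{id}_{(0,1]}\otimes a)=s\Psi_m(a)$ and given on elementary tensors by the order zero functional calculus as $\rho_{m,s}(f\otimes a)=f(s\Psi_m)(a)$. Let $q\colon U_\omega\to U_\omega/J_{U,\tau_U}$ be the quotient map. The relation $1_{U_\omega}-\Psi_m(1_A)\in J_{U,\tau_U}$ forces $q\circ\Psi_m$ to be a \emph{unital} order zero map, hence a trace-preserving $^*$-homomorphism $\overline{\Psi}_m$, and consequently $q\bigl(f(s\Psi_m)(a)\bigr)=f(s)\,\overline{\Psi}_m(a)$. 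Reading off the trace on elementary tensors gives $\tau_{U,\omega}(\rho_{m,s}(f\otimes a))=f(s)\,\tau_m(a)$, and since both sides of the resulting identity are bounded tracial functionals agreeing on the dense $^*$-subalgebra of elementary tensors, I obtain
\begin{equation}
\tau_{U,\omega}\bigl(\rho_{m,s}(w)\bigr)=\tau_m\bigl(w(s)\bigr),\qquad w\in C_0(0,1]\otimes A,
\end{equation}
where $w(s)\in A$ denotes evaluation of $w$ at $s\in(0,1]$.

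This identity is what yields joint faithfulness of the family $\{\rho_{m,s}\}$, and I expect this to be the crux of the argument. Given $0\neq w\in C_0(0,1]\otimes A$, pick $t_0\in(0,1]$ with $w(t_0)\neq0$ and then $m$ with $\tau_m(w(t_0)^*w(t_0))>0$; applying the displayed identity to $w^*w$ at $s=t_0$ gives $\tau_{U,\omega}(\rho_{m,t_0}(w^*w))>0$, so $\rho_{m,t_0}(w)\neq0$. Restricting $s$ to a countable dense subset of $(0,1]$ and using continuity, a countable subfamily $(\sigma_k)_k$ of the $\rho_{m,s}$ already satisfies $\bigcap_k\ker\sigma_k=\{0\}$; hence $\bigoplus_k\sigma_k$ is an isometric $^*$-homomorphism and $\sup_k\|\sigma_k(w)\|=\|w\|$ for every $w$. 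The point I would emphasise is that this step sidesteps any need to understand the spectrum of the individual elements $\Psi_m(1_A)$ (which may well be close to a projection, so that a single $\rho_{m,s}$ is far from faithful) — the trace identity does all the work, and this is precisely where the trace-preservation and unital-modulo-$J_{U,\tau_U}$ properties of Proposition~\ref{EmbeddingCone} enter.

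Finally I would assemble the matricial approximations. The cone is separable and nuclear, so each $\sigma_k$ lifts to a sequence of c.p.c.\ maps $C_0(0,1]\otimes A\to U$ which is asymptotically multiplicative and asymptotically recovers $\|\sigma_k(\cdot)\|$. Composing with the trace-preserving conditional expectations $E_j\colon U\to M_{d_j}$ onto the canonical matrix subalgebras (which converge pointwise to $\mathrm{id}_U$) pushes these into matrix algebras while preserving asymptotic multiplicativity and norms. Given a finite set $F$ and $\eps>0$, joint faithfulness lets me select finitely many indices $k\leq N$ with $\max_{k\leq N}\|\sigma_k(w)\|>\|w\|-\eps$ on $F$, and the direct sum of the corresponding matricial approximations is a single c.p.c.\ map into a finite-dimensional algebra that is multiplicative to within $\eps$ and almost isometric on $F$. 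By the local characterisation of quasidiagonality this shows $C_0(0,1]\otimes A$ is quasidiagonal, recovering Voiculescu's theorem \cite{V:Duke} in the present setting.
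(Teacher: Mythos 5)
Your proposal is correct and follows essentially the same route as the paper: apply Proposition~\ref{EmbeddingCone} with $B$ a UHF algebra, pass to the cone via order zero duality, rescale to obtain $^*$-homomorphisms whose composition with $\tau_{U,\omega}$ is $\delta_t\otimes\tau$ (your $\rho_{m,s}$ is exactly the paper's $\Pi_t$), and sum over a separating family of extremal traces. The only cosmetic differences are that you witness joint faithfulness by point evaluations rather than by averaging over $t=i/n$ against Lebesgue measure, and that you spell out the final passage to matrix algebras, which the paper leaves implicit.
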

\begin{proof}
Suppose first that $\tau_{A}$ is an extremal tracial state on $A$ and take $B$ to be a UHF algebra in Proposition \ref{EmbeddingCone}, and let $\Psi:A\rightarrow B_\omega$ be the order zero map given there. The duality between order zero maps from $A$ and $^*$-homomorphisms from $C_0(0,1]\otimes A$ \cite[Corollary 4.1]{WZ:MJM}, gives a $^*$-homomorphism $\Pi:C_0(0,1]\otimes A\rightarrow B_\omega$ which has $\Pi(\id_{(0,1]}\otimes x)=\Psi(x)$.  Note that 
\begin{equation}\label{QD.1}
\tau_{B,\omega}(\Psi(1_A)^n)=1,\quad n\in\mathbb N
\end{equation} as $1_{B_\omega}-\Psi(1_A)\in J_B$. Then the measure on $(0,1]$ induced by $\tau_{B,\omega}(\Pi(\cdot\otimes 1_A))$ is supported at $1$ so that $\tau_{B,\omega}\circ\Pi$ vanishes on $C_0(0,1)\otimes A$, and hence factors through $A\cong (C_0(0,1]\otimes A)/(C_0(0,1)\otimes A)$.  Accordingly $\tau_{B,\omega}\circ\Pi=\delta_1\otimes\tau_A$.

 Given $t\in (0,1]$, follow the restriction $C_0(0,1]\rightarrow C_0(0,t]$ with a map $C_0(0,t]\rightarrow C_0(0,1]$ induced by rescaling to obtain a $^*$-homomorphism $\Pi_t:C_0(0,1]\otimes A\rightarrow B_\omega$ with $\tau_{B,\omega}\circ\Pi_t=\delta_t\otimes\tau_A$. Now if $\tau_{A}$ is faithful, the sequence of maps $(\widetilde{\Pi}_n)_{n=1}^\infty$ given by
\begin{equation}\label{QD.2}
\widetilde{\Pi}_n(f)=\Pi_{1/n}(x)\oplus\Pi_{2/n}(f)\oplus\dots\oplus \Pi_1(f):A\rightarrow B_\omega^{\oplus n}\cong (B^{\oplus n})_\omega
\end{equation}
can be used to confirm quasidiagonality of $C_0(0,1]\otimes A$ as asymptotically 
\begin{equation}
\frac{1}{n}\sum_{i=1}^n\tau_{B,\omega}(\Pi_{i/n}(f))\rightarrow\int_0^1\tau_A(f(t))dt,\quad f\in C_0((0,1],A)\cong C_0(0,1]\otimes A,
\end{equation}
where the integral on the right is performed with respect to Lebesgue measure and so is non-zero when $f$ is positive and non-zero, as $\tau_A$ was assumed faithful.

In general, note that for every non-zero positive element $f\in C_0((0,1],A)$, the hypothesis gives an extremal trace $\tau$ on $A$ such that $\int_0^1\tau(f(t))dt\neq 0$.  Thus sums of the maps in (\ref{QD.2}) taken over finite subsets of the extremal boundary of the tracial state space of $A$ can be used to witness quasidiagonality of $C_0(0,1]\otimes A$.
\end{proof}

\section{$2$-coloured approximately inner flips}\label{IFlip}

\noindent
The next lemma extracts the 2-coloured approximately inner flip for monotracial nuclear UHF-stable $\mathrm{C}^*$-algebras from the proof of \cite[Theorem 4.2]{MS:Preprint} in the form we need for the sequel.  We give the proof for completeness (note that our notation differs from that in \cite{MS:Preprint}, particularly for ultrapowers).
\begin{lemma}\label{FlipLemma}
Let $A$ be a simple, separable, unital, non-elementary and nuclear $\mathrm{C}^*$-algebra with unique tracial state $\tau_A$, and let $V$ be a UHF algebra.  Then there exist contractions $v^{(0)},v^{(1)}\in (A\otimes A\otimes V)_\omega$ with
\begin{equation}\label{FlipLem.1}
\sum_{i=0}^1v^{(i)}(a\otimes b\otimes 1_V)v^{(i)}{}^*=(b\otimes a\otimes 1_V),\quad a,b\in A,
\end{equation}
\begin{equation}\label{FlipLem.2}
v^{(0)}{}^*v^{(0)},\ v^{(1)}{}^*v^{(1)}\in (1_A\otimes 1_A\otimes V)_\omega\text{ and }\sum_{i=0}^1v^{(i)}{}^*v^{(i)}=1_A\otimes 1_A\otimes 1_V.
\end{equation}
\end{lemma}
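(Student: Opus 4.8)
The plan is to descend to the tracial von Neumann completion, realise the flip there by a single unitary using Connes' theorem, and then transport this data back to the $\mathrm{C}^*$-ultrapower at the cost of two colours. First I would set $B=A\otimes A\otimes V$, which is simple, separable, unital, nuclear and non-elementary with unique trace $\tau=\tau_A\otimes\tau_A\otimes\tau_V$, and let $N$ be the weak closure of $B$ in the GNS representation of $\tau$. Since $B$ is nuclear (so $N$ is injective) and $\tau$ is the unique trace on the simple infinite-dimensional algebra $B$, Connes' theorem gives $N\cong R$, the hyperfinite II$_1$ factor. Exactly as in the proof of Proposition~\ref{EmbeddingCone}, Kaplansky density and the fact that the trace-kernel ideal $J:=J_{B,\tau}$ is a $\sigma$-ideal identify $B_\omega/J$ with the tracial ultrapower $R^\omega$; write $q:B_\omega\rightarrow B_\omega/J\cong R^\omega$ for the quotient map. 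The flip $a\otimes b\otimes v\mapsto b\otimes a\otimes v$ extends to an automorphism $\sigma$ of $N$, and since every automorphism of $R$ is strongly approximately inner (Connes), $\sigma$ is implemented in $R^\omega$ by a single unitary $w$, with $w\bar{x}w^*=\overline{\sigma(x)}$ for the image $\bar x\in N\subset R^\omega$ of $x\in B$.

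Second, I would transport $w$ down to $B_\omega$. The point is that \emph{if} $w$ lifted to an honest unitary $W\in B_\omega$ the lemma would be immediate, even with a single colour: for any positive contractions $h_0,h_1\in 1_A\otimes 1_A\otimes V$ with $h_0^2+h_1^2=1_V$ one could put $v^{(i)}=W(1\otimes 1\otimes h_i)$, giving $v^{(i)*}v^{(i)}=1\otimes 1\otimes h_i^2\in(1_A\otimes 1_A\otimes V)_\omega$ with $\sum_i v^{(i)*}v^{(i)}=1$, and $\sum_i v^{(i)}(a\otimes b\otimes 1)v^{(i)*}=W(a\otimes b\otimes 1)W^*=b\otimes a\otimes 1$, i.e.\ (\ref{FlipLem.1}) and (\ref{FlipLem.2}). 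The obstruction is precisely that $w$ need not lift to a unitary: by Effros--Rosenberg \cite{ER:PJM} there are $K$-theoretic obstructions to an honest one-coloured approximately inner flip. The role of the UHF algebra $V$ is to dissolve this obstruction. I would use the UHF structure of $V$ to fix a unital $2\times 2$ system of matrix units in $V$, giving complementary projections $p_0,p_1\in 1_A\otimes 1_A\otimes V$ with $p_0+p_1=1$ that commute with $A\otimes A$, and build two colours $v^{(0)},v^{(1)}$ with $q(v^{(i)})=w\,p_i$; then $q(\sum_i v^{(i)*}v^{(i)})=p_0+p_1=1$ and $q(\sum_i v^{(i)}(a\otimes b\otimes 1)v^{(i)*})=b\otimes a\otimes 1$ already hold modulo $J$.

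The hard part is upgrading these mod-$J$ relations to the exact operator identities (\ref{FlipLem.1}) and (\ref{FlipLem.2}) in $B_\omega$, which is the technical core extracted from \cite[Theorem~4.2]{MS:Preprint} and where two colours are genuinely needed. Here the von Neumann data must be converted into $\mathrm{C}^*$-data: the defects $1-\sum_i v^{(i)*}v^{(i)}$ and $b\otimes a\otimes 1-\sum_i v^{(i)}(a\otimes b\otimes 1)v^{(i)*}$ lie in the $\sigma$-ideal $J$, hence are trace-small, and I would absorb them using the $\sigma$-ideal property of $J$ together with a property-(SI)/strict-comparison absorption argument for $B$ — available because $A\otimes V$ is simple, separable, unital, nuclear, monotracial and absorbs $V$ tensorially, hence has strict comparison. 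Concretely, the Murray--von Neumann equivalence witnessed by $w$ (matching $p_0,p_1$ with their flipped images) is realised by a \emph{pair} of $\mathrm{C}^*$-level contractions — a two-coloured equivalence that exists even when its single-colour (unitary) version is $K$-theoretically blocked — which are then reassembled into $v^{(0)},v^{(1)}$ with sources honestly in $(1_A\otimes 1_A\otimes V)_\omega$. The factor $V$ supplies both the halving projections $p_i$ and the room to run this absorption. I expect the bookkeeping of this two-coloured correction step to be the main obstacle, the Connes and $\sigma$-ideal inputs being comparatively routine given the earlier sections.
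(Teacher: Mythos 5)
Your high-level architecture --- Connes' theorem at the von Neumann level, then strict comparison/property (SI) to convert the $\|\cdot\|_{2,\tau}$-data into exact operator-norm identities at the cost of two colours --- is exactly the paper's, and your first step (a single unitary $w$ implementing the flip modulo the trace-kernel ideal $J$) is the same as the paper's sequence $(u_n)$ from \cite[Theorem 5.1 (7)$\Rightarrow$(3)]{C:Ann}. The gap is in the upgrade step, which you correctly identify as the core but whose mechanism you misdescribe in two ways. First, the two colours do \emph{not} come from halving $V$ by complementary projections $p_0,p_1$ with $q(v^{(i)})=wp_i$: since the $p_i$ commute with everything in sight, that splitting is vacuous --- it repackages the one-coloured mod-$J$ statement and gives no leverage, and your later reference to ``the Murray--von Neumann equivalence matching $p_0,p_1$ with their flipped images'' is empty because the flip fixes $1_A\otimes 1_A\otimes V$ pointwise. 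Second, ``absorbing'' the trace-small defects cannot work as stated: the $\sigma$-ideal property lets you cut by $1-e$ to kill elements of $J$, but any such compression destroys the exact normalisation $\sum_i v^{(i)*}v^{(i)}=1_{A\otimes A\otimes V}$ required in (\ref{FlipLem.2}), which must hold on the nose, not modulo $J$.

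What the paper actually does is amplify to $B=M_2\otimes A\otimes A\otimes V$ and embed $A\otimes A$ via the twisted diagonal $\pi(x)=\mathrm{diag}(x\otimes 1_V,\sigma(x)\otimes 1_V)$, so that the sought object becomes a $2$-coloured Murray--von Neumann equivalence between the corner projections $e=\mathrm{diag}(1,0)$ and $f=\mathrm{diag}(0,1)$ inside the relative commutant $B_\omega\cap\pi(A\otimes A)'$. The off-diagonal partial isometry built from $(u_n)$ shows $q(e)\sim q(f)$, whence $\tau(e)=\tau(f)$ for \emph{every} trace on the relative commutant by the relative SI result \cite[Proposition 3.3(i)]{MS:Preprint}; strict comparison of projections in the relative commutant \cite[Proposition 3.3(ii)]{MS:Preprint} then feeds the $2$-coloured Cuntz--Pedersen theorem \cite[Lemma 2.1]{MS:Preprint}, which produces the pair of contractions exactly commuting with $\pi(A\otimes A)$ and with sources in the corner $(1_A\otimes 1_A\otimes V)_\omega$. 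The two colours are intrinsic to that theorem, and the role of $V$ is to absorb the matrix amplifications $M_n$ it requires (plus a reindexing to pass from $4/n$-approximate to exact relations) --- not to supply a $2\times2$ halving. Without this reformulation your plan has no route to the exact identities (\ref{FlipLem.1}) and (\ref{FlipLem.2}).
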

\begin{proof}
As $A$ has unique trace its strong closure in the GNS-representation of this trace is an injective II$_1$ factor, and so has a $\|\cdot\|_{2,\tau}$-approximately inner flip by the implication (7)$\Rightarrow$(3) from Connes' Theorem \cite[Theorem 5.1]{C:Ann}).  In particular, using Kaplansky's density theorem, there exists a sequence $(u_n)_{n=1}^\infty$ of unitaries in $A\otimes A$ with
\begin{equation}\label{FlipLem.StrongFlip}
\lim_{n\rightarrow\omega}\|u_n x u_n^*- \sigma(x)\|_{2,\tau_{A\otimes A}}=0,\quad x\in A\otimes A,
\end{equation}
where $\sigma:A\otimes A\rightarrow A\otimes A$ is the flip automorphism of $A\otimes A$.

Write $B=M_{2} \otimes A\otimes A \otimes V$ and let $q:B_\omega\rightarrow B_\omega/J_{B,\tau_{B}}$  denote the quotient map. Consider the embedding $\pi:A\otimes A\rightarrow B_\omega\cong M_2((A\otimes A \otimes V)_\omega)$, given by 
\begin{equation}
\pi(x)=\begin{bmatrix}\iota(x) \otimes 1_{V_{\omega}}&0\\0&\iota\circ\sigma(x) \otimes 1_{V_{\omega}}\end{bmatrix},
\end{equation}
where $\iota:A\otimes A\hookrightarrow (A\otimes A)_\omega$ is the canonical unital inclusion.  Denote the partial isometry in $B_\omega$ represented by\begin{equation}
\left(\begin{bmatrix}0&0\\u_n \otimes 1_{V}&0\end{bmatrix}\right)_{n=1}^\infty
\end{equation}
by $u$, so that $q(u)\in q(B_\omega) \cap q(\pi(A\otimes A))'$ from (\ref{FlipLem.StrongFlip}). Then we have projections
\begin{equation}
e=\begin{bmatrix}1_{(A\otimes A \otimes V)_\omega}&0\\0&0\end{bmatrix},\quad f=\begin{bmatrix}0&0\\0&1_{(A\otimes A \otimes V)_\omega}\end{bmatrix}
\end{equation}
in $B_\omega\cap \pi(A\otimes A)'$ satisfying $e=u^*u$ and $f=uu^*$.  By construction, the partial isometry $q(u)$ witnesses Murray-von Neumann equivalence of $q(e)$ and $q(f)$ in $q(B_\omega) \cap q(\pi(A\otimes A))'$.  

The relative SI results of \cite[Section 3]{MS:Preprint} apply to the inclusion $\pi(A\otimes A)\subset B_\omega$. In particular every trace on $B_\omega \cap \pi(A\otimes A)'$ is of the form $\widetilde{\tau}\circ q$ for some trace $\widetilde{\tau}$ on $q(B_\omega) \cap q(\pi(A\otimes A))'$ by \cite[Proposition 3.3(i)]{MS:Preprint}. Thus $\tau(e)=\tau(f)$ for all traces $\tau\in T(B_\omega \cap \pi(A\otimes A)')$.
Further, as $B$ has strict comparison, \cite[Proposition 3.3(ii)]{MS:Preprint} applies to show that $B_\omega \cap \pi(A\otimes A)'$ has strict comparison of projections. Then, for each $n\in\mathbb N$, apply the $2$-coloured Cuntz-Pedersen theorem (\cite[Lemma 2.1]{MS:Preprint}) to obtain contractions $\widetilde{w}_n^{(0)},\widetilde{w}^{(1)}_n$ in $(B_\omega\cap \pi(A\otimes A)')\otimes M_n$, such that
\begin{equation}
\|\widetilde{w}^{(0)}_n{}^*\widetilde{w}_n^{(0)}+\widetilde{w}_n^{(1)}{}^*\widetilde{w}_n^{(1)}-e\otimes 1_n\|\leq \frac{4}{n},\quad \|\widetilde{w}_n^{(0)}\widetilde{w}_n^{(0)}{}^*+\widetilde{w}_n^{(1)}\widetilde{w}_n^{(1)}{}^*-f\otimes 1_n\|\leq \frac{4}{n}
\end{equation}
and
\begin{equation}
\mathrm{dist}(\widetilde{w}_n^{(0)}{}^*\widetilde{w}_n^{(0)},e\otimes M_n)\leq \frac{2}{n},\quad \mathrm{dist}(\widetilde{w}_n^{(1)}{}^*\widetilde{w}_n^{(1)},e\otimes M_n)\leq \frac{2}{n}.
\end{equation}

By regarding $M_n$ (for suitable $n$) as being unitally embedded in $V$, we have $\widetilde{w}^{(0)}_n,\widetilde{w}^{(1)}_n\in B_\omega \cap \pi(A\otimes A)'$.  A standard reindexing argument gives contractions $\widetilde{v}^{(0)},\widetilde{v}^{(1)}\in B_\omega \cap \pi(A\otimes A)'$ with 
\begin{equation}\label{LemB.1}
\sum_{i=0}^1\widetilde{v}^{(i)}{}^*\widetilde{v}^{(i)}=\begin{bmatrix}1_{(A\otimes A\otimes V)_\omega}&0\\0&0\end{bmatrix},\quad \sum_{i=0}^1\widetilde{v}^{(i)}\widetilde{v}^{(i)}{}^*=\begin{bmatrix}0&0\\0&1_{(A\otimes A\otimes V)_\omega}\end{bmatrix}
\end{equation}
and
\begin{equation}\label{LemB.2}
\widetilde{v}^{(0)}{}^*\widetilde{v}^{(0)},\ \widetilde{v}^{(1)}{}^*\widetilde{v}^{(1)}\in \begin{bmatrix}(1_A\otimes 1_A\otimes V)_\omega&0\\0&0\end{bmatrix}.
\end{equation}
From (\ref{LemB.1}), there are contractions $v^{(0)},v^{(1)}\in (A\otimes A\otimes V)_\omega$ such that
\begin{equation}
\widetilde{v}^{(0)}=\begin{bmatrix}0&0\\ v^{(0)}&0\end{bmatrix},\quad \widetilde{v}^{(1)}=\begin{bmatrix}0&0\\ v^{(1)}&0\end{bmatrix}.
\end{equation}
These are readily seen to satisfy (\ref{FlipLem.1}) and (\ref{FlipLem.2}).
\end{proof}

Since UHF algebras of infinite type are strongly self-absorbing, if a separable $\mathrm{C}^*$-algebra $A$ tensorially absorbs a UHF-algebra $U$ of infinite type, then there is an isomorphism $A\rightarrow A\otimes U$ which is approximately unitarily equivalent to the first factor embedding $\iota:A\rightarrow A\otimes U$ \cite[Theorem 7.2.2 and Remark 7.2.3]{R:Book}.  Applying this statement to $A\otimes A$, one obtains an isomorphism $\Theta:(A\otimes A\otimes U)_\omega\rightarrow (A\otimes A)_\omega$ with $\Theta(a\otimes b\otimes 1_U)=a\otimes b$ for $a,b\in A$. Following Proposition \ref{FlipLemma} by this isomorphism gives the next statement, the conclusion of which one might take as the definition that ``$A$ has $2$-coloured approximately inner flip''.
\begin{proposition}\label{FlipFollowup}
Let $A$ be a simple, separable, unital and nuclear $\mathrm{C}^*$-algebra with a unique tracial state which absorbs a UHF-algebra of infinite type.  Then there exist contractions $v^{(0)},v^{(1)}\in (A\otimes A)_\omega$ with $v^{(i)}{}^*v^{(i)}\in (A\otimes A)_\omega \cap (A\otimes A)'$ for $i=0,1$, $\sum_{i=0}^1v^{(i)}{}^*v^{(i)}=1_{A\otimes A}$, and
\begin{equation}\label{FlipFollowup.1}
\sum_{i=0}^1v^{(i)}(a\otimes b)v^{(i)}{}^*=b\otimes a,\quad a,b\in A.
\end{equation}
\end{proposition}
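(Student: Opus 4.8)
The plan is to obtain this as a direct transport of Lemma \ref{FlipLemma} (taken with $V=U$) along the isomorphism $\Theta$ described in the paragraph preceding the statement. First I would check that the hypotheses of Lemma \ref{FlipLemma} are met: since $A$ absorbs a UHF algebra $U$ of infinite type we have $A\cong A\otimes U$, which is infinite dimensional, so being simple and unital $A$ is non-elementary; the remaining requirements (simple, separable, unital, nuclear, unique trace) are among our assumptions. Applying Lemma \ref{FlipLemma} with $V=U$ then produces contractions $w^{(0)},w^{(1)}\in (A\otimes A\otimes U)_\omega$ satisfying (\ref{FlipLem.1}) and (\ref{FlipLem.2}).

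Next I would invoke the isomorphism $\Theta:(A\otimes A\otimes U)_\omega\rightarrow (A\otimes A)_\omega$ with $\Theta(a\otimes b\otimes 1_U)=a\otimes b$ for $a,b\in A$. This exists because $A\otimes A\cong (A\otimes A)\otimes U$ (as $U\otimes U\cong U$ for $U$ of infinite type), and strong self-absorption of $U$ makes the first factor embedding $A\otimes A\rightarrow (A\otimes A)\otimes U$ approximately unitarily equivalent to an isomorphism; conjugating the induced ultrapower isomorphism by the implementing unitary yields $\Theta$ with the stated action on $a\otimes b\otimes 1_U$. Setting $v^{(i)}=\Theta(w^{(i)})$ gives contractions in $(A\otimes A)_\omega$, and applying the unital $^*$-isomorphism $\Theta$ to (\ref{FlipLem.1}) gives $\sum_{i=0}^1 v^{(i)}(a\otimes b)v^{(i)}{}^*=\Theta(b\otimes a\otimes 1_U)=b\otimes a$, which is (\ref{FlipFollowup.1}); applying $\Theta$ to the second identity in (\ref{FlipLem.2}) gives $\sum_{i=0}^1 v^{(i)}{}^*v^{(i)}=\Theta(1_{A\otimes A\otimes U})=1_{A\otimes A}$.

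For the centrality condition $v^{(i)}{}^*v^{(i)}\in (A\otimes A)_\omega\cap (A\otimes A)'$ I would argue as follows. By the first part of (\ref{FlipLem.2}), each $w^{(i)}{}^*w^{(i)}$ lies in $(1_A\otimes 1_A\otimes U)_\omega$, and elements represented by sequences of the form $1_A\otimes 1_A\otimes u_n$ commute with the constant copy $A\otimes A\otimes 1_U\subset (A\otimes A\otimes U)_\omega$ level-wise, hence in the ultrapower. Since $\Theta$ is a $^*$-isomorphism carrying the constant copy $A\otimes A\otimes 1_U$ onto $A\otimes A\subset (A\otimes A)_\omega$, it follows that $v^{(i)}{}^*v^{(i)}=\Theta(w^{(i)}{}^*w^{(i)})$ commutes with $A\otimes A$, as required.

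I expect that the only genuine content here is the construction of $\Theta$ with the property $\Theta(a\otimes b\otimes 1_U)=a\otimes b$, which is already supplied in the preceding paragraph via strong self-absorption of $U$; once $\Theta$ is available the proposition is a routine transport of the conclusion of Lemma \ref{FlipLemma}, the sole mild subtlety being the commutation observation used to deduce centrality of the source data $w^{(i)}{}^*w^{(i)}$.
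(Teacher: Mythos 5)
Your proposal is correct and follows essentially the same route as the paper: the paper derives Proposition \ref{FlipFollowup} precisely by applying Lemma \ref{FlipLemma} with a UHF algebra $V=U$ of infinite type and transporting the conclusion along the isomorphism $\Theta:(A\otimes A\otimes U)_\omega\rightarrow (A\otimes A)_\omega$ with $\Theta(a\otimes b\otimes 1_U)=a\otimes b$ obtained from strong self-absorption of $U$. Your additional checks (non-elementarity of $A$, and the commutation of $(1_A\otimes 1_A\otimes U)_\omega$ with the constant copy of $A\otimes A\otimes 1_U$ to get centrality of $v^{(i)}{}^*v^{(i)}$) are exactly the details the paper leaves implicit.
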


Having an approximately inner flip is a strong requirement to impose on a separable $\mathrm{C}^*$-algebra $A$, forcing $A$ to be simple and nuclear, and to have at most one tracial state \cite[Proposition 2.7, Proposition 2.8, Lemma 2.9]{ER:PJM}.  Essentially the same arguments apply to $\mathrm{C}^*$-algebras satisfying the conclusions of Proposition \ref{FlipFollowup}.  We sketch this below for completeness.  In particular this shows that in using Lemma \ref{FlipLemma} to prove Theorem \ref{Main}, the unique trace hypothesis really is essential.
\begin{proposition}[cf.~ \cite{ER:PJM}]
Let $A$ be a separable unital $\mathrm{C}^*$-algebra with $2$-coloured approximately inner flip, i.e., satisfying the conclusions of Proposition \ref{FlipFollowup}.  Then $A$ is simple, nuclear and has at most one tracial state.
\end{proposition}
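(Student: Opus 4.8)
The plan is to establish the three conclusions separately, in each case feeding the $2$-coloured flip relation (\ref{FlipFollowup.1}) through a suitable functional or completely positive map. The partial isometries $v^{(0)},v^{(1)}$ replace the single implementing unitary of the classical approximately inner flip, and in each argument the two colours are absorbed simply by summing over $i$. Throughout I fix lifts $(v^{(i)}_n)_{n=1}^\infty$ of $v^{(i)}$ by sequences of contractions in $A\otimes A$ (as in Notation \ref{ultrapower-notation}), so that (\ref{FlipFollowup.1}) reads $\lim_{n\rightarrow\omega}\|\sum_{i=0}^1 v^{(i)}_n(a\otimes b)(v^{(i)}_n)^*-b\otimes a\|=0$ for all $a,b\in A$.

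\emph{At most one trace.} Suppose $\tau_1,\tau_2$ are tracial states on $A$; I would show $\tau_1=\tau_2$. The product $\tau:=\tau_1\otimes\tau_2$ is a tracial state on $A\otimes A$ and induces a limit tracial state $\tau_\omega$ on $(A\otimes A)_\omega$. Evaluating $\tau_\omega$ on both sides of the identity $b\otimes a=\sum_i v^{(i)}(a\otimes b)(v^{(i)})^*$, using traciality to move $(v^{(i)})^*$ past $v^{(i)}$ together with $\sum_i (v^{(i)})^*v^{(i)}=1_{A\otimes A}$, gives
\[
\tau_1(b)\tau_2(a)=\sum_{i=0}^1\tau_\omega\big(v^{(i)}(a\otimes b)(v^{(i)})^*\big)=\sum_{i=0}^1\tau_\omega\big((a\otimes b)(v^{(i)})^*v^{(i)}\big)=\tau_1(a)\tau_2(b).
\]
Setting $b=1_A$ yields $\tau_2(a)=\tau_1(a)$ for all $a\in A$, so $\tau_1=\tau_2$. (Note this uses only the flip relation, the summation $\sum_i (v^{(i)})^*v^{(i)}=1$, and traciality, not the centrality of the $(v^{(i)})^*v^{(i)}$.)

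\emph{Simplicity.} Let $I\lhd A$ be a closed two-sided ideal with $I\neq A$; I would show $I=0$. Since $1_A\notin I$, choose a state $\rho$ on $A$ vanishing on $I$. For $a\in I$, each term $v^{(i)}_n(1\otimes a)(v^{(i)}_n)^*$ lies in the closed two-sided ideal $J:=\overline{A\odot I}$ of $A\otimes A$, because $1\otimes a\in J$ and $v^{(i)}_n\in A\otimes A$; taking the ultralimit in (\ref{FlipFollowup.1}) with $b=1_A$ shows that $a\otimes 1$ lies in the closed set $J$. Now the slice map $\id\otimes\rho:A\otimes A\rightarrow A$ is completely positive and contractive, vanishes on $J$ (as $\rho|_I=0$), and sends $a\otimes 1$ to $a$; hence $a=0$. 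As $a\in I$ was arbitrary, $I=0$ and $A$ is simple.

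\emph{Nuclearity.} I expect this to be the main obstacle, since the $v^{(i)}$ live in the ultrapower and a priori only yield factorizations of $\id_A$ through the infinite-dimensional algebra $A\otimes A$. Fixing a state $\psi$ on $A$, the left slice $\psi\otimes\id:A\otimes A\rightarrow A$ is completely positive and contractive with $(\psi\otimes\id)(1\otimes a)=a$; feeding (\ref{FlipFollowup.1}) (with $b=1_A$) through it gives $\sum_i(\psi\otimes\id)\big(v^{(i)}_n(a\otimes 1)(v^{(i)}_n)^*\big)\rightarrow a$ as $n\rightarrow\omega$, uniformly on finite subsets of $A$. The decisive point is to make each summand finite-dimensional: I would approximate $v^{(i)}_n$ in norm by an elementary tensor $\widetilde v^{(i)}=\sum_{k=1}^{K}c_k\otimes d_k\in A\odot A$, whereupon
\[
a\longmapsto(\psi\otimes\id)\big(\widetilde v^{(i)}(a\otimes 1)(\widetilde v^{(i)})^*\big)=\sum_{k,l}\psi(c_k a c_l^*)\,d_kd_l^*
\]
has range in the finite-dimensional space $\mathrm{span}\{d_kd_l^*\}$ and factors as $A\xrightarrow{\phi}M_K\xrightarrow{\rho}A$ with $\phi(a)=(\psi(c_ka c_l^*))_{k,l}$ and $\rho((m_{kl}))=\sum_{k,l}m_{kl}d_kd_l^*$, both completely positive. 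Summing over the two colours then exhibits $\id_A$ as a point-norm limit of completely positive maps factoring through finite-dimensional C$^*$-algebras, so $A$ is nuclear. The hard part is precisely this passage from the ultrapower factorization to genuinely matricial factorizations; once the finite-rank approximation of the $v^{(i)}_n$ is in place, the two-colour bookkeeping is automatic.
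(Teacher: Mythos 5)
Your proposal is correct and follows essentially the same route as the paper's own proof: the trace statement by applying $(\tau_1\otimes\tau_2)_\omega$ to the flip relation with $b=1_A$ and using $\sum_i v^{(i)*}v^{(i)}=1$, simplicity by observing that the flip relation interchanges the ideals $I\otimes A$ and $A\otimes I$ (your slice map $\id\otimes\rho$ is just the standard way of seeing these are distinct for a nontrivial proper $I$), and nuclearity by lifting the $v^{(i)}$ to finite sums of elementary tensors and slicing by a state to obtain uniformly bounded finite-rank c.p.\ maps converging point-norm to $\id_A$.
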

\begin{proof}
For simplicity, if $I$ is a proper ideal in $A$, then $I\otimes A$ and $A\otimes I$ are distinct ideals in $A\otimes A$.  On the other hand if the flip automorphism on $A$ takes the form (\ref{FlipFollowup.1}), then it follows that $I\otimes A=A\otimes I$, a contradiction.  For the tracial statement, suppose $\tau_1$ and $\tau_2$ are traces on $A$, inducing a trace $(\tau_1\otimes\tau_2)_\omega$ on $(A\otimes A)_\omega$. Noting that $\sum_{i=0}^1v^{(i)}{}^*v^{(i)}=1_{(A\otimes A)_\omega}$, applying $(\tau_1\otimes\tau_2)_\omega$ to (\ref{FlipFollowup.1}) with $b=1_A$ shows that $\tau_1(a)=\tau_2(a)$ for $a\in A$.  Finally, for nuclearity, given (\ref{FlipFollowup.1}), lift each $v^{(i)}$ isometrically to a sequence $(v^{(i)}_n)_{n=1}^\infty$ of finite combinations of elementary tensors, $v^{(i)}_n=\sum_{j\in \Lambda_n}c^{(i)}_{j,n}\otimes d^{(i)}_{j,n}$.  Fix a state $\phi$ on $A$; then the maps
\begin{equation}
A\rightarrow A;\quad a\mapsto \sum_{i=0}^1\sum_{j,k\in \Lambda_n}\phi(c^{(i)}_{j,n}ac^{(i)}_{k,n}{}^*)d_{j,n}^{(i)}d_{k,n}^{(i)}{}^*
\end{equation}
are c.p., uniformly bounded, finite rank, and converge to $\id_A$ as $n\rightarrow\omega$ in point norm topology, so that $A$ is nuclear.
\end{proof}

\section{Tracially large purely positive elements in $\Z_\omega$}\label{Shuff}

\noindent
Our objective in this section is to establish the following uniqueness result for certain positive contractions of full spectrum which we shall apply when $A=B=\Z$.  

\begin{lemma}\label{ShuffleLemma}
Let $A$ and $B$ be simple, unital and exact $\mathrm{C}^*$-algebras with unique tracial states $\tau_{A}$ and $\tau_B$ respectively such that $A\otimes B$ has strict comparison and stable rank one. Let $h\in B$ be a positive contraction with spectrum $[0,1]$.  Given positive contractions $e^{(0)},e^{(1)}\in J_{A,\tau_{A}}\lhd A_\omega$, there is a unitary $u\in (A\otimes B)_\omega$ with $u((1_{A_\omega}-e^{(0)})\otimes h)u^*=(1_{A_\omega}-e^{(1)})\otimes h$.
\end{lemma}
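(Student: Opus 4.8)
The plan is to exhibit $x_0$ and $x_1$ as approximately unitarily equivalent positive elements of $D:=(A\otimes B)_\omega$ and then to upgrade this to an honest unitary conjugacy using the saturation of the ultrapower. Here $x_i:=(1_{A_\omega}-e^{(i)})\otimes h$; these are positive contractions, so functional calculus yields $^*$-homomorphisms $\phi_i\colon C_0(0,1]\to D$ with $\phi_i(\id_{(0,1]})=x_i$. Since $A\otimes B$ is simple, unital and of stable rank one, Lemma \ref{UnitaryPolar} shows $D$ has stable rank one, so the Ciuperca--Elliott/Robert classification of positive elements up to approximate unitary equivalence in stable-rank-one algebras \cite{CE:IMRN,R:Adv} applies: once I show that $\phi_0$ and $\phi_1$ induce the same morphism $\Cu(C_0(0,1])\to\Cu(D)$, the elements $x_0,x_1$ are approximately unitarily equivalent in $D$. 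As a morphism out of $\Cu(C_0(0,1])$ is determined by the images of the generators $\langle(\id_{(0,1]}-t)_+\rangle$, this reduces to proving
\begin{equation}
\langle (x_0-t)_+\rangle=\langle(x_1-t)_+\rangle\quad\text{in }\Cu(D)\text{ for all }t\in(0,1).
\end{equation}
Finally, lifting the approximating unitaries to sequences of unitaries in $A\otimes B$ (possible by \ref{ultrapower-notation}) and reindexing by countable saturation will produce the single unitary $u\in D$ with $ux_0u^*=x_1$ required by the lemma.

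To obtain this equality I would compare both sides with $1_{A_\omega}\otimes h$. Fix $e\in\{e^{(0)},e^{(1)}\}$, write $x=(1_{A_\omega}-e)\otimes h$, and let $\tau:=\tau_A\otimes\tau_B$ be the (unique, by exactness) $2$-quasitrace of $A\otimes B$, with induced limit trace $\tau_{A\otimes B,\omega}$ on $D$. Since $e\in J_{A,\tau_A}$ is a positive contraction, $\tau_{A,\omega}(e^k)=0$ for all $k\ge1$, whence $\tau_{A,\omega}(g(e))=g(0)$ for all $g\in C[0,1]$; equivalently the spectral distribution of $1_{A_\omega}-e$ against $\tau_{A,\omega}$ is $\delta_1$. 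Consequently $x$ and $1_{A_\omega}\otimes h$ have the same spectral distribution $\nu_h$ against $\tau_{A\otimes B,\omega}$, where $\nu_h$ is the distribution of $h$ against $\tau_B$; in particular
\begin{equation}
d_{\tau_{A\otimes B,\omega}}\big((x-t)_+\big)=\nu_h\big((t,1]\big)=d_{\tau_{A\otimes B,\omega}}\big(1_{A_\omega}\otimes(h-t)_+\big),\qquad t\in(0,1).
\end{equation}

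The substantive step, and where I expect the main difficulty, is converting this equality of rank functions into equality of Cuntz classes. Using the characterisation $y\precsim z\Leftrightarrow(y-\eps)_+\precsim z\ \forall\eps>0$ (Proposition 2.17 of \cite{APT:Contemp}), it suffices to establish, for all small $\eps>0$, the subequivalences $(x-t-\eps)_+\precsim 1_{A_\omega}\otimes(h-t)_+$ and $1_{A_\omega}\otimes(h-t-\eps)_+\precsim(x-t)_+$, together with their counterparts with the two elements interchanged. I would deduce each of these from the strict comparison of $A\otimes B$ applied \emph{at finite stages}: lifting $e$ to positive contractions $(e_n)$ in $A$ with $\tau_A(e_n)\to0$ gives $\nu_{e_n}\to\delta_0$ weakly, hence $\mu_{(1_A-e_n)\otimes h}\to\nu_h$ weakly. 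Because $h$ has full spectrum $[0,1]$ and $\tau_B$, as the trace of a simple algebra, is faithful, $\nu_h$ has full support, so $\nu_h((t,t+\eps))>0$; the portmanteau theorem then upgrades the weak convergence to strict inequalities of rank functions such as $\mu_{(1_A-e_n)\otimes h}((t+\eps,1])<\nu_h((t,1])$ for $\omega$-most $n$, which are precisely the hypotheses of strict comparison in $A\otimes B$ for $((1_A-e_n)\otimes h-t-\eps)_+\precsim 1_A\otimes(h-t)_+$ (and symmetrically for the reverse). Choosing Cuntz witnesses at each such $n$ and assembling them into an element of $D$ gives the subequivalence in the ultrapower; letting $\eps\to0$ yields the two inclusions of Cuntz classes, and symmetry finishes the equality. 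The delicate points are exactly that the comparison must be performed at finite stages (rather than assuming strict comparison of the non-separable algebra $D$ directly), that the norms of the finite-stage Cuntz witnesses must be controlled uniformly in $n$ so as to define a bounded element of $D$, and that the half-open spectral intervals must be handled carefully when invoking weak convergence.
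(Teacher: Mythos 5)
Your proposal follows essentially the same route as the paper's proof: reduce by transitivity to comparing $(1_{A_\omega}-e^{(0)})\otimes h$ with $1_{A_\omega}\otimes h$, invoke the Ciuperca--Elliott classification over the stable-rank-one algebra $(A\otimes B)_\omega$, verify the Cuntz-semigroup hypothesis by combining $\tau_{A,\omega}(e^k)=0$ with strict comparison applied at finite stages of a lift (with the open/closed asymmetry of the portmanteau estimates handled exactly as you describe), and upgrade approximate to exact unitary equivalence by reindexing. The uniform boundedness of the Cuntz witnesses, which you correctly single out as the delicate point, is resolved in the paper by inserting the function $r_{\eps/2}$ satisfying $r_{\eps/2}(t)(t-\eps/2)_+=(t-\eps/2)_+$, so that the witnessing elements become $v_n(\widetilde h_n-\eps/2)_+^{1/2}$, whose norms are controlled by $\|v_n(\widetilde h_n-\eps/2)_+v_n^*\|\approx\|(\widetilde h-\eps)_+\|\le 1$.

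The one step whose justification is not right as stated is the reduction to the classes $\langle(\id_{(0,1]}-t)_+\rangle$. These correspond to the indicator functions $\chi_{(t,1]}$ in $\Cu(C_0(0,1])\cong\mathrm{lsc}((0,1],\{0,1,\dots,\infty\})$, and they do \emph{not} generate the semigroup under addition and suprema of increasing sequences: $\chi_{(a,b)}$ with $b<1$ vanishes at $1$, whereas every nonzero element of the subsemigroup generated by the $\chi_{(t,1]}$ is nonzero at $1$. So the assertion that a $\CU$-morphism is determined by its values on these elements requires a further argument; it is true, but it is the content of the Robert--Santiago refinement of \cite{CE:IMRN} rather than a formal consequence of generation. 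The paper sidesteps this by proving $\langle\rho^{(0)}(f)\rangle=\langle\rho^{(1)}(f)\rangle$ for \emph{all} $f\in C_0(0,1]_+$ -- that is, for the indicators of all open subsets of $(0,1]$, which do generate under addition and increasing suprema -- and your measure-theoretic argument extends to this generality with only notational changes: replace the intervals $(t+\eps,1]$ and $(t,1]$ by $\overline{\supp((f-\eps)_+)}$ and $\supp(f)$, using simplicity of $A\otimes B$ and fullness of the spectrum of $h$ to obtain the required strict inequalities of rank functions. Either repair closes the gap.
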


Recall that $C_0(0,1]$ is the universal $\mathrm{C}^*$-algebra generated by a positive contraction, so positive contractions in a $\mathrm{C}^*$-algebra $D$ are in bijective correspondence with $^*$-homomorphisms $C_0(0,1]\rightarrow D$.  To prove Lemma \ref{ShuffleLemma}, we use Ciuperca and Elliott's classification of such $^*$-homomorphisms up to approximate unitary equivalence when $D$ has stable rank one via the Cuntz semigroup from \cite{CE:IMRN} (stated in the form we use as Theorem \ref{CuClassification} below; see also \cite{R:Adv}).

\begin{theorem}[{Ciuperca-Elliott \cite[Theorem 4]{CE:IMRN}}]\label{CuClassification}
Let $D$ be a unital $\mathrm{C}^*$-algebra with stable rank one, and for $i=0,1$, let $\rho^{(i)}:C_0(0,1]\rightarrow D$ be $^*$-homomorphisms.  Then $\rho^{(0)}$ and $\rho^{(1)}$ are approximately unitarily equivalent if and only if the induced maps $\Cu(\rho^{(i)}):\Cu(C_0(0,1])\rightarrow\Cu(D)$ are equal.
\end{theorem}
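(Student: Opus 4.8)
The plan is to identify $^*$-homomorphisms $C_0(0,1]\to D$ with positive contractions in $D$ and then realise approximate unitary equivalence through a spectral matching argument driven by the Cuntz comparison data. Write $\iota\in C_0(0,1]$ for the generator $\iota(t)=t$, and set $a_i=\rho^{(i)}(\iota)$, which are positive contractions. Since $\iota$ generates $C_0(0,1]$, functional calculus and a Stone--Weierstrass approximation show that $\rho^{(0)}$ and $\rho^{(1)}$ are approximately unitarily equivalent precisely when for every $\eps>0$ there is a unitary $u\in\widetilde{D}$ with $\|ua_0u^*-a_1\|<\eps$. On the Cuntz side, applying $\Cu(\rho^{(i)})$ to the generating classes $\langle(\iota-t)_+\rangle$ shows that the hypothesis $\Cu(\rho^{(0)})=\Cu(\rho^{(1)})$ in particular yields the two-sided comparison $\langle(a_0-t)_+\rangle=\langle(a_1-t)_+\rangle$ for all $t\in[0,1)$, which is the data I will use.

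The forward implication is routine. If unitaries $u_n$ conjugate $\rho^{(0)}$ to $\rho^{(1)}$ asymptotically, then $u_n\rho^{(0)}(f)u_n^*\to\rho^{(1)}(f)$ in norm for positive $f$. Conjugation by a unitary preserves Cuntz classes, and the standard fact that $\|x-y\|<\eps$ forces $(x-\eps)_+\precsim y$ shows the relation $\precsim$ is stable under norm limits; combined with the criterion $x\precsim y\iff(x-\eps)_+\precsim y$ for all $\eps$ recalled above, this gives $\langle\rho^{(0)}(f)\rangle=\langle\rho^{(1)}(f)\rangle$, and likewise after amplifying by $M_k$. Hence $\Cu(\rho^{(0)})=\Cu(\rho^{(1)})$.

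For the substantial converse I would build the approximating unitaries by a telescoping spectral construction. The essential tool is the stable-rank-one upgrade of Cuntz comparison: if $(a-\delta)_+\precsim b$ in a stable rank one algebra, then there is a unitary $w$ with $w(a-\delta)_+w^*\in\overline{bDb}$, since the contractions witnessing $\precsim$ can be dilated to unitaries precisely because $D$ has stable rank one. Fixing $\eps>0$, I choose a partition $0=t_0<t_1<\dots<t_N=1$ of mesh less than $\eps$. Using the equalities $\langle(a_0-t_k)_+\rangle=\langle(a_1-t_k)_+\rangle$ at each level, I would inductively produce unitaries matching the nested hereditary subalgebras $\overline{(a_0-t_k)_+D(a_0-t_k)_+}$ and $\overline{(a_1-t_k)_+D(a_1-t_k)_+}$, and then patch these level-by-level matchings into a single unitary $u$. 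Since both $a_0$ and $a_1$ agree, up to the mesh $\eps$, with the corresponding step functions of their cut-downs, controlling the matching across adjacent levels yields $\|ua_0u^*-a_1\|\leq C\eps$ for an absolute constant $C$, and letting $\eps\to0$ gives approximate unitary equivalence.

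The main obstacle is exactly this patching step. Equality of Cuntz classes furnishes only an approximate subequivalence, valid after cutting down by an arbitrarily small $\delta$ and realised by an honest unitary only after invoking stable rank one, so the unitaries obtained at successive spectral levels are a priori unrelated; one must interpolate between them coherently to obtain a single unitary implementing a genuine norm approximation across the whole spectrum simultaneously. Handling this is precisely the heart of Ciuperca and Elliott's argument, which they organise by introducing a metric $d_W$ on $\CU$-morphisms that dominates the approximate-unitary-equivalence distance $d_U$ between homomorphisms: equality of the induced $\Cu$-maps gives $d_W=0$, whence $d_U=0$, and the two homomorphisms are approximately unitarily equivalent.
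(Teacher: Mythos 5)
The first thing to note is that the paper contains no proof of this statement at all: Theorem \ref{CuClassification} is quoted verbatim from Ciuperca--Elliott \cite{CE:IMRN}, so your blind attempt has to stand on its own. Its setup and easy direction do stand: identifying $\rho^{(i)}$ with the positive contractions $a_i=\rho^{(i)}(\mathrm{id}_{(0,1]})$, reducing approximate unitary equivalence of the homomorphisms to approximate unitary conjugacy of $a_0$ and $a_1$ by Stone--Weierstrass, and deducing equality of the induced $\Cu$-maps from approximate unitary equivalence via the norm-stability of $\precsim$ (through the criterion $x\precsim y\iff(x-\eps)_+\precsim y$ for all $\eps>0$) are all correct and routine. (Since $D$ is unital you may take unitaries in $D$ rather than $\widetilde{D}$.) Restricting attention to the level data $\langle(a_0-t)_+\rangle=\langle(a_1-t)_+\rangle$ is also legitimate rather than a loss of generality: Ciuperca and Elliott's pseudometric $d_W$ is computed from precisely these classes, and full equality of the $\Cu$-morphisms is recovered a posteriori from the easy direction once approximate unitary equivalence is established.

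The converse, however, is where the entire content of the theorem lives, and your argument stalls exactly where you say it does. Stable rank one does give, for each level $t_k$ and each $\delta>0$, a unitary $w_k$ with $w_k(a_0-t_k-\delta)_+w_k^*\in\overline{(a_1-t_k)_+D(a_1-t_k)_+}$; but the $w_k$ at different levels are mutually unrelated, and a unitary implementing the comparison at level $t_k$ can completely scramble the spectral data at $t_{k\pm1}$, so the claimed bound $\|ua_0u^*-a_1\|\leq C\eps$ does not follow from the level-wise matchings by any routine interpolation. Your closing move --- invoking the estimate $d_U\leq C\, d_W$ --- is an appeal to the quantitative form of the very theorem being proved, so as a blind proof the argument is circular. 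What is missing is the actual inductive mechanism of \cite{CE:IMRN}: a one-sided statement that $d_W<\eps$ forces the existence of a single unitary moving $a_0$ to within a universal multiple of $\eps$ of $a_1$, proved by working with interlaced spectral levels, correcting the comparison witnesses level by level using polar decompositions available under stable rank one, and exploiting orthogonality of well-separated cut-downs to assemble finitely many partial matchings into one unitary with geometrically controlled errors. As written, you have proved the easy direction and reduced the substantial one to the citation --- which, to be fair, is exactly what the paper itself does, but it means your proposal is a reduction to \cite{CE:IMRN} rather than a proof.
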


In the proof of Lemma \ref{ShuffleLemma} we use strict comparison in order to show that the $^*$-homomorphisms of Theorem \ref{CuClassification} do indeed induce the same map at the level of the Cuntz semigroup. 
Note that when $\tau$ is a faithful trace on $D$, and $h\in D_+$ is a positive contraction with full spectrum, then $d_\tau((h-\eps)_+)<d_\tau(h)$ for all $\eps>0$.  

\begin{proof}[Proof of Lemma \ref{ShuffleLemma}.]
By transitivity, it suffices to prove the lemma with $e^{(1)}=0$.  For $i=0,1$, write $\rho^{(i)}$ for the induced $^*$-homomorphism $C_0(0,1]\rightarrow (A\otimes  B)_\omega$, i.e.,
\begin{equation}
\rho^{(0)}(\mathrm{id}_{(0,1]}) = (1_{A_{\omega}} - e^{(0)}) \otimes h
\end{equation}
and
\begin{equation}
\rho^{(1)}(\mathrm{id}_{(0,1]}) = 1_{A_{\omega}} \otimes h.
\end{equation}
We shall show that
\begin{equation}\label{Shuffle.Key}
\langle \rho^{(0)}(f)\rangle =\langle \rho^{(1)}(f)\rangle\text{ in }\Cu((A\otimes B)_\omega),\quad f\in C_0(0,1].
\end{equation}
Since $\Cu(C_0(0,1])$ is isomorphic to $\mathrm{lsc}((0,1],\{0,1,2,\dots,\infty\})$, the lower semicontinuous functions from $(0,1]$ into the extended natural numbers (a result of Coward which can be found as \cite[Theorem 10.1]{CE:IMRN}), it is generated (under the operations of addition and taking suprema) by the equivalence classes of positive elements in $C_0(0,1]$.  Thus, once we have established (\ref{Shuffle.Key}) it follows that $\Cu(\rho^{(0)})=\Cu(\rho^{(1)})$, and hence $\rho^{(0)}$ and $\rho^{(1)}$ are approximately unitarily equivalent by Theorem \ref{CuClassification}, as $(A\otimes B)_\omega$ inherits stable rank one from $A\otimes B$ by Lemma \ref{UnitaryPolar}.  Then either a reindexing argument, or the ``$\eps$-test'' \cite[Lemma A.1]{K:Abel} can be used to show that $\rho^{(0)}$ and $\rho^{(1)}$ are actually unitarily equivalent.

We now establish (\ref{Shuffle.Key}).  Write $\tau_{A\otimes B}=\tau_A\otimes \tau_B$. Lift $e^{(0)}$ to a sequence $(e_n)_{n=1}^\infty$ of positive contractions in $A$, which satisfy $\lim_{n\rightarrow\omega}\tau_A(e_n)=0$.  Write $\mu_n$ for the measure on $(0,1]$ induced by 
\begin{equation}
\mu_n(f)=\tau_{A\otimes B}(f((1_A-e_n)\otimes h)), \quad f\in C_0(0,1]
\end{equation}
and $\mu$ for the measure induced by 
\begin{equation}
\mu(f)=\tau_{A\otimes B}(f(1_A\otimes h)),\quad f\in C_0(0,1].
\end{equation}
As $e^{(0)}\in J_A$, we have
\begin{equation}
\lim_{n\rightarrow\omega}\tau_{A\otimes B}((1_A-e_n)^m\otimes h^m)= \tau_{A\otimes B}(1_A\otimes h^m),\quad m\in\mathbb N
\end{equation}
and so, by approximating $f\in C_0(0,1]$ uniformly by polynomial functions, 
\begin{equation}
\lim_{n\rightarrow\omega}\tau_{A\otimes B}(f((1_A-e_n)\otimes h))=\tau_{A\otimes B}(f(1_A\otimes h)),\quad f\in C_0(0,1].
\end{equation}
Given an open set $U\subset (0,1]$, take a sequence $(g_m)_{m=1}^\infty$ of continuous functions in $C_0(0,1]$ with $0\leq g_m\scaleobj{0.75}{\nearrow} \chi_U$, where $\chi_U$ is the indicator function of $U$.  Then
$$
\mu_n(\chi_{U})\geq \mu_n(g_m)\stackrel{n\rightarrow\omega}\rightarrow \mu(g_m),
$$
so that 
\begin{equation}\label{Shuffle.1}
\lim_{n\rightarrow\omega}\mu_n(\chi_{U})\geq \mu(g_m)\stackrel{m\rightarrow\infty}{\rightarrow}\mu(U),
\end{equation}
by the monotone convergence theorem.  Consequently, for closed subsets $F\subseteq(0,1]$, we have 
\begin{equation}\label{Shuffle.Closed}
\lim_{n\rightarrow\omega}\mu_n(F)\leq\mu(F).
\end{equation}

Now, for $f\in C_0(0,1]$, 
\begin{equation}\label{Shuffle.2}
d_{\tau_{A\otimes B}}(f((1_A-e_n)\otimes h))=\mu_n(\supp(f)).
\end{equation}
Thus (\ref{Shuffle.1}) and (\ref{Shuffle.2}) combine to show
\begin{equation}\label{Shuffle.3}
\lim_{n\rightarrow\omega}d_{\tau_{A\otimes B}}(f((1_A-e_n)\otimes h))\geq d_{\tau_{A\otimes B}}(f(1_A\otimes h)),\quad f\in C_0(0,1]_+.
\end{equation}

Fix $f\in C_0(0,1]_+$ with $\|f\|=1$.  For notational purposes write $\widetilde{h}=f(1_A\otimes h)$, and $\widetilde{h}_n=f((1_A-e_n)\otimes h)$.  As $h$ has full spectrum, so too has $\widetilde{h}$. Since $A$ and $B$ (and hence $A\otimes B$, since we work with the spatial tensor norm) are simple, it follows that for a fixed, sufficiently small $\eps>0$, we have
\begin{equation}\label{Shuffle.3a}
d_{\tau_{A\otimes B}}((\widetilde{h}-\eps)_+)<d_{\tau_{A\otimes B}}((\widetilde{h}-\eps/2)_+)<d_{\tau_{A\otimes B}}(\widetilde{h}).
\end{equation}
Define 
\begin{equation}r_{\eps/2}(t)=\begin{cases}2t/\eps&0\leq t\leq\eps/2;\\1,&t\geq \eps/2.\end{cases}
\end{equation}
Then $r_{\eps/2}(t) (t-\eps/2)_+=(t-\eps/2)_+$.  By (\ref{Shuffle.3}) and (\ref{Shuffle.3a}), we have
\begin{equation}\label{Shuffle.RepeatFromHere}
\lim_{n\rightarrow\omega}d_{\tau_{A\otimes B}}((\widetilde{h}_n-\eps/2)_+)\geq d_{\tau_{A\otimes B}}((\widetilde{h}-\eps/2)_+)>d_{\tau_{A\otimes B}}((\widetilde{h}-\eps)_+).
\end{equation}
Thus, the set $\Lambda=\{n\in\mathbb N:d_{\tau_{A \otimes B}}((\widetilde{h}_n-\eps/2)_+)>d_{\tau_{A \otimes A}}((\widetilde{h}-\eps)_+)\}$ lies in $\omega$, and for $n\in \Lambda$, $(\widetilde{h}-\eps)_+\precsim (\widetilde{h}_n-\eps/2)_+$ by strict comparison (exactness of $A$ and $B$, and hence of $A\otimes B$, plays a role here ensuring via Haagerup's work \cite{H:QTrace} that the unique trace is the only $2$-quasitrace on $A\otimes B$).  For $n\in \Lambda$, find $v_n\in A\otimes B$ (it is easy to see that $v_n$ can be taken in $A\otimes B$ regarded as a subalgebra of $A\otimes B\otimes\mathcal K$) with $\|v_n(\widetilde{h}_n-\eps/2)_+v_n^*-(\widetilde{h}-\eps)_+\|<1/n$ and for $n\notin \Lambda$, define $v_n$ arbitrarily with $\|v_n\|\leq 1$.  Then
\begin{equation}
\lim_{n\rightarrow\omega}\|v_n(\widetilde{h}_n-\eps/2)_+^{1/2}r_{\eps/2}(\widetilde{h}_n)(\widetilde{h}_n-\eps/2)_+^{1/2}v_n^*-(\widetilde{h}-\eps)_+\|=0.
\end{equation}
As the sequence $(v_n(\widetilde{h}_n-\eps/2)_+^{1/2})_{n=1}^\infty$ is uniformly bounded, this witnesses 
\begin{equation}
(f(1_A\otimes h)-\eps)_+\precsim r_{\eps/2}(f((1_{A_\omega}-e^{(0)})\otimes h))\sim f((1_{A_\omega}-e^{(0)})\otimes h)
\end{equation}
in $(A\otimes B)_\omega$. Since $\eps>0$ can be taken arbitrarily small, this shows that
\begin{equation}\label{Shuffle.ToHere}
\langle f(1_A\otimes h)\rangle\leq \langle f((1_{A_\omega}-e^{(0)})\otimes h)\rangle\text{ in }\Cu((A\otimes B)_\omega).
\end{equation}

The reverse inequality is similar, working with closed supports.  Fix an $\eps>0$ small enough that $\supp((f-\eps/2)_+)\supsetneq\overline{\supp((f-\eps)_+)}$, and 
\begin{equation}
\mu\big(\overline{\supp((f-\eps)_+})\big)<d_{\tau_{A\otimes B}}((\widetilde{h}-\eps/2)_+).
\end{equation}
By (\ref{Shuffle.Closed}), 
\begin{equation}
\lim_{n\rightarrow\omega}\mu_n\big(\overline{\supp((f-\eps)_+)}\big)<d_{\tau_{A\otimes B}}((\widetilde{h}-\eps/2)_+),
\end{equation}
so that 
\begin{equation}
\lim_{n\rightarrow\omega}d_{\tau_{A\otimes B}}((\widetilde{h}_n-\eps)_+)<d_{\tau_{A\otimes B}}((\widetilde{h}-\eps/2)_+).
\end{equation}
Then, arguing in just the same way that (\ref{Shuffle.ToHere}) is obtained from (\ref{Shuffle.RepeatFromHere}), we have
\begin{equation}
\langle f(1_A\otimes h)\rangle\geq \langle f((1_{A_\omega}-e^{(0)})\otimes h)\rangle\text{ in }\Cu((A\otimes B)_\omega).
\end{equation}
Combining this with (\ref{Shuffle.ToHere}) establishes (\ref{Shuffle.Key}), and so completes the proof.
\end{proof}

Note that when $B$ is the Jiang-Su algebra, then when $A$ is simple, unital, exact and stably finite, $A\otimes\Z$ has strict comparison and stable rank one by \cite{R:IJM}, leading to the following version of the lemma, in which we can also replace $\Z$ by any UHF-algebra.
\begin{lemma}
Let $A$ be a simple, unital and exact $\mathrm{C}^*$-algebra with unique tracial state $\tau_{A}$. Let $h\in \Z$ be a positive contraction with spectrum $[0,1]$.  For any pair of positive contractions $e^{(0)},e^{(1)}\in J_{A,\tau_{A}}\lhd A_\omega$ there is a unitary $u\in (A\otimes \Z)_\omega$ with $u((1_{A_\omega}-e^{(0)})\otimes h)u^*=(1_{A_\omega}-e^{(1)})\otimes h$.
\end{lemma}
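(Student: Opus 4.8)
The plan is to deduce this statement directly from Lemma \ref{ShuffleLemma} by taking $B=\Z$ and checking that all hypotheses of the more general lemma are in force. The algebra $\Z$ is simple, unital, nuclear (hence exact) and has a unique tracial state, so it meets every requirement imposed on $B$ in Lemma \ref{ShuffleLemma}, and the full-spectrum positive contraction $h$ is supplied by hypothesis. The only genuine verification is that $A\otimes\Z$ has strict comparison and stable rank one.

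To obtain these two properties I would first record that $A$ is stably finite: since $A$ is simple and unital, its unique tracial state $\tau_A$ is automatically faithful, and a unital $\mathrm{C}^*$-algebra carrying a faithful tracial state is stably finite. Thus $A$ is simple, unital, exact and stably finite, and $A\otimes\Z$ is $\Z$-stable; R\o{}rdam's results \cite{R:IJM} then give that $A\otimes\Z$ has strict comparison and stable rank one. With these in hand, Lemma \ref{ShuffleLemma} applies verbatim with $A$ and $B=\Z$, producing the desired unitary $u\in(A\otimes\Z)_\omega$ implementing $u((1_{A_\omega}-e^{(0)})\otimes h)u^*=(1_{A_\omega}-e^{(1)})\otimes h$.

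For the final remark that $\Z$ may be replaced by an arbitrary UHF algebra $U$, I would argue identically. Such a $U$ is again simple, unital, nuclear and monotracial, and since every infinite-dimensional UHF algebra absorbs $\Z$ tensorially, $A\otimes U$ is itself $\Z$-stable; the same appeal to \cite{R:IJM} yields strict comparison and stable rank one for $A\otimes U$. One then states the analogous hypothesis that $h\in U$ is a positive contraction of spectrum $[0,1]$ and invokes Lemma \ref{ShuffleLemma} with $B=U$.

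There is essentially no obstacle here, as Lemma \ref{ShuffleLemma} has already carried out the substantive Cuntz-semigroup and approximate-unitary-equivalence argument; the work is purely in confirming the structural hypotheses. The only mild point requiring care is ensuring stable finiteness of $A$, so that R\o{}rdam's comparison and stable-rank theorems may be applied — and this follows immediately from simplicity together with the existence of a (necessarily faithful) tracial state.
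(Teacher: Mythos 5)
Your proposal is correct and follows essentially the same route as the paper, which derives this lemma from Lemma \ref{ShuffleLemma} by noting that for $A$ simple, unital, exact and stably finite, $A\otimes\Z$ has strict comparison and stable rank one by R\o{}rdam's results in \cite{R:IJM}. Your additional observation that stable finiteness of $A$ follows from the (automatically faithful) tracial state on the simple unital algebra $A$ is exactly the small gap the paper leaves implicit, and you close it correctly.
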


\section{$\mathcal Z$-stability and nuclear dimension}\label{LastSect}

\noindent
In this section we establish the main result of the paper: Theorem \ref{Main}.  Recall that the Jiang-Su algebra has strict comparison and stable rank one (\cite{R:IJM}, which shows the same for all simple, separable, unital, exact and stably finite $\Z$-stable $\mathrm{C}^*$-algebras). Thus Lemma \ref{ShuffleLemma} can be applied with $A=B=\Z$.

From the original inductive limit construction one can see that the Jiang-Su algebra is almost divisible (again see \cite{R:IJM} for the more general statement in the presence of $\Z$-stability); in particular given $\eps>0$ and $k\in\mathbb N$, there exists a c.p.c.~ order zero map $\psi:M_k\rightarrow\Z$ with $\tau_\Z(\psi(1_k))>1-\eps$. (Note too that these maps can be obtained using the maps $\Phi$ from Proposition \ref{NewProp1.2}).  We use this last fact in the first of two technical lemmas, which allow us to adjoin the UHF tensor factor required to employ Lemma \ref{FlipLemma} and factorise this through a sum of two order zero maps back into $\Z$.  The two order zero maps used here double the number of colours used in the final approximation: two colours arise from the $2$-coloured approximately inner flip, and another two from Lemma \ref{2OrderZeroMaps}, leading to a $4$-colour approximation in Theorem \ref{Main}.

\begin{lemma}\label{LargeEmbed}
Let $U$ be a UHF algebra. Then there exists a sequence $(\widetilde{\psi}_n)_{n=1}^\infty$ of c.p.c.~ order zero maps $U\rightarrow \Z$ inducing an order zero map $\widetilde{\Psi}:U_\omega\rightarrow \Z_\omega$ such that for any positive contraction $e\in J_{U, \tau_{U}}$, we have $1_{\Z_\omega}-\widetilde{\Psi}(1_{U_\omega}-e)\in J_{\Z, \tau_{\Z}}$.
\end{lemma}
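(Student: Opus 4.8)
The plan is to reduce the ideal membership to a single statement about traces, and then to arrange the maps $\widetilde\psi_n$ to be trace-scaling by constants tending to $1$. First, since each $\widetilde\psi_n$ is c.p.c.\ and order zero, the induced map $\widetilde\Psi$ is linear and c.p.c., so $\widetilde\Psi(1_{U_\omega}-e)=\widetilde\Psi(1_{U_\omega})-\widetilde\Psi(e)$ is a positive contraction in $\Z_\omega$ whenever $0\le e\le 1_{U_\omega}$. For any positive contraction $c\in\Z_\omega$ one has $1_{\Z_\omega}-c\in J_{\Z,\tau_\Z}$ if and only if $\tau_{\Z,\omega}(c)=1$: lifting $c$ to positive contractions $c_n$, the estimates $\tau_\Z((1-c_n)^2)\le\tau_\Z(1-c_n)\le\tau_\Z((1-c_n)^2)^{1/2}$ show that $\lim_{n\to\omega}\tau_\Z((1-c_n)^2)=0$ is equivalent to $\lim_{n\to\omega}\tau_\Z(c_n)=1$. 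Thus the conclusion is equivalent to $\tau_{\Z,\omega}(\widetilde\Psi(1_{U_\omega}-e))=1$, and it suffices to arrange $\tau_{\Z,\omega}\circ\widetilde\Psi=\tau_{U,\omega}$ on $U_\omega$; indeed then $\tau_{\Z,\omega}(\widetilde\Psi(1_{U_\omega}-e))=\tau_{U,\omega}(1_{U_\omega}-e)=1-0=1$, using $e\in J_{U,\tau_U}$.

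The point is that this trace-preservation of $\widetilde\Psi$ comes for free once each $\widetilde\psi_n$ scales the trace by a constant approaching $1$. Concretely, for any honest c.p.c.\ order zero map $\phi:U\to\Z$ I claim $\tau_\Z\circ\phi=\tau_\Z(\phi(1_U))\cdot\tau_U$. To see this, use the structure theorem to write $\phi(\cdot)=\pi(\cdot)h$, where $h=\phi(1_U)$ commutes with the image of the supporting $^*$-homomorphism $\pi:U\to\mathcal M(\mathrm{C}^*(\phi(U)))\subseteq\Z^{**}$. Writing $\bar\tau$ for the normal tracial extension of $\tau_\Z$ to $\Z^{**}$ and using centrality of $h$ for $\pi(U)$, one computes $\tau_\Z(\phi(ab))=\bar\tau(\pi(a)\pi(b)h)=\bar\tau(\pi(a)h\pi(b))=\bar\tau(\pi(b)\pi(a)h)=\tau_\Z(\phi(ba))$, so $\tau_\Z\circ\phi$ is a trace on $U$; by uniqueness of the trace on the UHF algebra $U$ it equals $\tau_\Z(\phi(1_U))\cdot\tau_U$. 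Consequently, if $\lambda_n:=\tau_\Z(\widetilde\psi_n(1_U))\to 1$ along $\omega$, then for $x=(x_n)_n\in U_\omega$ we have $\tau_{\Z,\omega}(\widetilde\Psi(x))=\lim_{n\to\omega}\lambda_n\tau_U(x_n)=\tau_{U,\omega}(x)$, which is exactly the trace-preservation needed above.

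It remains to construct, for each $n$, an honest c.p.c.\ order zero map $\widetilde\psi_n:U\to\Z$ with $\lambda_n=\tau_\Z(\widetilde\psi_n(1_U))>1-1/n$, and this is the heart of the matter. Writing $U=\overline{\bigcup_m M_{p_m}}$ as an increasing union of matrix algebras, almost divisibility (as recalled before the lemma, via the maps $\Phi$ of Proposition~\ref{NewProp1.2}) provides an order zero map $M_{p_1}\to\Z$ whose unit has trace $>1-1/n$; I would then extend successively along the unital inclusions $M_{p_m}\hookrightarrow M_{p_{m+1}}$ to honest order zero maps and pass to the inductive limit to obtain $\widetilde\psi_n$ on $U$. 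Crucially, an \emph{exact} order zero extension fixes the image of the unit (since $1_{p_{m+1}}$ is the image of $1_{p_m}$), so the trace of $\widetilde\psi_n(1_U)$ stays equal to its value at the first stage, namely $>1-1/n$; each $\widetilde\psi_n$ being honest order zero, the induced $\widetilde\Psi:U_\omega\to\Z_\omega$ is automatically order zero, and the previous two paragraphs finish the proof. The hard part will be the existence of these order zero extensions inside $\Z$. This does \emph{not} clash with projectionlessness of $\Z$ -- the supporting $^*$-homomorphisms take values in the multiplier algebras of the hereditary subalgebras generated by the images, which do contain matrix units -- but it genuinely requires the finer structure of $\Z$. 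I expect to realize it through a classification of $^*$-homomorphisms from the cone $C_0(0,1]\otimes U$ (an inductive limit of one-dimensional NCCW complexes) into the stable-rank-one algebra $\Z$ via the Cuntz semigroup, in the spirit of Theorem~\ref{CuClassification}, matching the Cuntz invariants so that successive restrictions agree and the unit-image is preserved, with the almost divisibility input guaranteeing that the relevant Cuntz data is realizable with trace arbitrarily close to $1$.
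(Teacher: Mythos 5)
Your reduction is correct and matches the paper's computation: the membership $1_{\Z_\omega}-\widetilde\Psi(1_{U_\omega}-e)\in J_{\Z,\tau_\Z}$ is equivalent to $\tau_{\Z,\omega}(\widetilde\Psi(1_{U_\omega}-e))=1$, and your observation that $\tau_\Z\circ\phi$ is a trace on $U$ for any c.p.c.\ order zero $\phi$ (hence equals $\tau_\Z(\phi(1_U))\cdot\tau_U$ by uniqueness of the trace on $U$) is exactly the ingredient the paper invokes from \cite[Corollary 4.4]{WZ:MJM}. So everything hinges, as you say, on producing honest c.p.c.\ order zero maps $\widetilde\psi_n:U\to\Z$ with $\tau_\Z(\widetilde\psi_n(1_U))>1-1/n$ --- and this is precisely the step you have not proved. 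Your plan is to extend order zero maps successively along the unital inclusions $M_{p_m}\hookrightarrow M_{p_{m+1}}$ and take an inductive limit, invoking Cuntz-semigroup classification to make the restrictions agree. But classification results in the spirit of Theorem \ref{CuClassification} (or Robert's theorem) only give agreement up to \emph{approximate unitary equivalence}, whereas an inductive limit of maps requires the restrictions to agree \emph{exactly}; turning approximate agreement into an actual map requires an intertwining argument with uniqueness control at every stage, none of which is set up here. You would also need an existence theorem realising the prescribed Cuntz morphism $\Cu(C_0(0,1]\otimes M_{p_{m+1}})\to\Cu(\Z)$ compatibly with the previous stage and with the trace condition, which you assert but do not verify. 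As written, the heart of the lemma is deferred to machinery that is named but not deployed.

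The paper's construction avoids all of this by working at the level of supernatural numbers rather than finite stages: choose relatively prime supernatural numbers $p,q$ with $U$ embedding unitally into $M_p\otimes M_q$, so that only a \emph{single} order zero map $M_p\otimes M_q\to\Z$ is needed (no extension problem). This map is written down explicitly: the dimension drop algebra $Z_{p,q}\subset C([0,1],M_p\otimes M_q)$ embeds unitally into $\Z$ carrying the Lebesgue trace (by \cite[Proposition 3.3]{RW:Crelle}), and the cone $C_0(0,1]\otimes M_p\otimes M_q$ maps into $Z_{p,q}$ by sending $\mathrm{id}_{(0,1]}$ to a function vanishing at the endpoints and equal to $1$ on $[1/2n,1-1/2n]$, which forces $\tau_\Z(\widetilde\psi_n(1_U))\geq 1-1/n$. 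If you want to salvage your argument, the cleanest fix is to replace the inductive extension scheme by this one-step construction; otherwise you must supply both the existence and the intertwining arguments in full.
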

\begin{proof}
Let $p$ and $q$ be supernatural numbers which are relatively prime and such that $U$ embeds unitally into $M_{p} \otimes M_{q}$. As pointed out in \cite[Proposition 3.3]{RW:Crelle}, the generalised dimension drop interval $Z_{p,q}$ embeds unitally into $\Z$. Careful inspection of the proof (which follows that of \cite[Proposition 2.2]{R:IJM}) shows that this embedding can be chosen so that the trace on $\Z$ induces the trace on $Z_{p,q}$ associated to the Lebesgue integral on $[0,1]$. Note that $Z_{p,q} \subset C([0,1],M_{p} \otimes M_{q})$, and that there is an order zero map $M_{p} \otimes M_{q} \rightarrow Z_{p,q}$ such that the Lebesgue trace on $Z_{p,q}$ is at most $1-1/n$ on the image of the unit of $M_{p} \otimes M_{q}$. (Embed the cone $C_{0}(0,1] \otimes M_{p} \otimes M_{q}$ into $C[0,1] \otimes M_{p} \otimes M_{q} \cong C([0,1],M_{p} \otimes M_{q})$ in such a way that $\mathrm{id}_{(0,1]}$ maps to a function which vanishes at the endpoints and is constant $1$ on the interval $[1/2n, 1-1/2n]$.) We may now compose these maps to obtain a sequence $(\widetilde{\psi}_n)_{n=1}^\infty$ of order zero maps $U\rightarrow\Z$ with $\tau_\Z(\widetilde{\psi}_n(1_U))>1-1/n$ for each $n\in\mathbb N$. Define $\widetilde{\Psi}$ to be the c.p.c.~ order zero map $U_\omega\rightarrow \Z_\omega$ induced by the sequence $(\widetilde{\psi}_n)_{n=1}^\infty$.  Given a positive contraction $e\in J_{U,\tau_{U}}$, lift $e$ to a sequence $(e_n)_{n=1}^\infty$ of positive contractions with $\lim_{n\rightarrow\omega}\tau_{U}(e_n)=0$.  Since $\tau_\Z\circ \widetilde{\psi}_n$ is a trace on $U$ (by \cite[Corollary 4.4]{WZ:MJM}), we have $\tau_\Z(\widetilde{\psi}_n(e_n))=\tau_U(e_n)\tau_\Z(\widetilde{\psi}_n(1_U))$. Then
\begin{align}
\tau_\Z(1_\Z-\widetilde{\psi}_n(1_U-e_n))&=1-\tau_\Z(\widetilde{\psi}_n(1_U))+\tau_\Z(\widetilde{\psi}_n(e_n))\nonumber\\
&=1+\tau_\Z(\widetilde{\psi}_n(1))(\tau_U(e_n)-1)\stackrel{n\rightarrow\omega}{\rightarrow} 1+1(0-1)=0.
\end{align}
Thus $1_{\Z_\omega}-\widetilde{\Psi}(1_{U_\omega}-e)\in J_{\Z,\tau_{\Z}}$, as claimed.
\end{proof}

\begin{lemma}\label{2OrderZeroMaps}
Let $U$ be a UHF algebra, and let $e\in J_{U,\tau_{U}}$ be a positive contraction.  Then there exist two sequences $(\lambda^{(0)}_n)_{n=1}^\infty$ and $(\lambda^{(1)}_n)_{n=1}^\infty$ of c.p.c.~ order zero maps $U\rightarrow\Z$ inducing c.p.c.~ order zero maps $\Lambda^{(0)},\Lambda^{(1)}:U_\omega\rightarrow\Z_\omega$  such that $\sum_{j=0}^1\Lambda^{(j)}(1_{U_\omega}-e)=1_{\Z_{\omega}}$.  
\end{lemma}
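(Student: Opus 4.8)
The plan is to feed the single order zero map produced in Lemma \ref{LargeEmbed} into the shuffle unitaries of Lemma \ref{ShuffleLemma}, using the isomorphism $\Z\cong\Z\otimes\Z$ to separate the trace-kernel defect from a full-spectrum positive element. This separation is precisely the configuration Lemma \ref{ShuffleLemma} is set up to handle: there the defect $e^{(i)}$ lives at the ultrapower level in one tensor factor, while the full-spectrum contraction $h$ lives in the base algebra of the other factor.

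First I would apply Lemma \ref{LargeEmbed} to obtain a c.p.c.\ order zero map $\widetilde{\Psi}:U_\omega\to\Z_\omega$, induced by a sequence $(\widetilde{\psi}_n)_{n=1}^\infty$ of c.p.c.\ order zero maps $U\to\Z$, such that $p:=\widetilde{\Psi}(1_{U_\omega}-e)$ is a positive contraction with $1_{\Z_\omega}-p\in J_{\Z,\tau_\Z}$. I then pass to $\Z\cong\Z\otimes\Z$, viewing $\Z_\omega\cong(\Z\otimes\Z)_\omega$ with $\widetilde{\Psi}$ mapping into the first tensor factor, and fix a positive contraction $h\in\Z$ with spectrum $[0,1]$; note that $1_\Z-h$ then also has spectrum $[0,1]$. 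Since $\Z\otimes\Z\cong\Z$ has strict comparison and stable rank one, Lemma \ref{ShuffleLemma} applies with $A=B=\Z$. Taking $e^{(0)}=1_{\Z_\omega}-p\in J_{\Z,\tau_\Z}$ and $e^{(1)}=0$, and running the lemma once with base contraction $h$ and once with base contraction $1_\Z-h$, I obtain unitaries $w_0,w_1\in(\Z\otimes\Z)_\omega$ with
\[
w_0(p\otimes h)w_0^*=1\otimes h,\qquad w_1\big(p\otimes(1_\Z-h)\big)w_1^*=1\otimes(1_\Z-h).
\]

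Next I would define $\Lambda^{(0)},\Lambda^{(1)}:U_\omega\to(\Z\otimes\Z)_\omega\cong\Z_\omega$ by $\Lambda^{(0)}(x)=w_0(\widetilde{\Psi}(x)\otimes h)w_0^*$ and $\Lambda^{(1)}(x)=w_1(\widetilde{\Psi}(x)\otimes(1_\Z-h))w_1^*$. Each is c.p.c.\ order zero: tensoring the order zero map $\widetilde{\Psi}$ with a fixed positive contraction preserves the order zero relation and contractivity, and conjugating by a unitary preserves both. Moreover, lifting $w_0,w_1$ to sequences of unitaries in $\Z\otimes\Z$ (unitaries lift, as recorded in Notation \ref{ultrapower-notation}) and using the maps $\widetilde{\psi}_n$ shows that $\Lambda^{(0)},\Lambda^{(1)}$ are induced by sequences of c.p.c.\ order zero maps $U\to\Z\otimes\Z\cong\Z$, as required. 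Evaluating at $1_{U_\omega}-e$ and using the two displayed identities gives
\[
\Lambda^{(0)}(1_{U_\omega}-e)+\Lambda^{(1)}(1_{U_\omega}-e)=1\otimes h+1\otimes(1_\Z-h)=1_{(\Z\otimes\Z)_\omega}=1_{\Z_\omega},
\]
which is exactly the assertion.

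The substantive content is imported from Lemmas \ref{LargeEmbed} and \ref{ShuffleLemma}, so the main point to get right here is the bookkeeping: one must verify that $1_{\Z_\omega}-p$ really is a positive contraction in $J_{\Z,\tau_\Z}$ so that it may serve as $e^{(0)}$ in Lemma \ref{ShuffleLemma}, and that tensoring the first factor with the second lets us quote that lemma verbatim (separately for the full-spectrum contractions $h$ and $1_\Z-h$). I expect the only genuinely delicate step to be the recognition that applying the shuffle twice, with complementary base contractions $h$ and $1_\Z-h$, produces an \emph{exact} partition $1\otimes h+1\otimes(1_\Z-h)=1$ of the unit; this is what allows the defect $1_{\Z_\omega}-p$ (invisible to the trace but not to the norm) to be absorbed completely rather than merely up to $J_{\Z,\tau_\Z}$.
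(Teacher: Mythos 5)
Your argument is correct and follows essentially the same route as the paper's proof: the paper also fixes a full-spectrum positive contraction $d^{(0)}=h$ with complement $d^{(1)}=1_\Z-h$, tensors the single map from Lemma \ref{LargeEmbed} with each $d^{(j)}$, and applies Lemma \ref{ShuffleLemma} (with $A=B=\Z$) to conjugate $\widetilde{\Lambda}(1_{U_\omega}-e)\otimes d^{(j)}$ onto $1_{\Z_\omega}\otimes d^{(j)}$ so that the two images sum exactly to the unit. The bookkeeping points you flag (that $1_{\Z_\omega}-\widetilde{\Psi}(1_{U_\omega}-e)$ is a positive contraction in $J_{\Z,\tau_\Z}$, and that the conjugated maps are still induced by sequences of c.p.c.\ order zero maps into $\Z\otimes\Z\cong\Z$) are exactly the ones the paper handles.
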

\begin{proof}
Fix a positive contraction $d^{(0)}\in \Z$ with spectrum $[0,1]$, and write $d^{(1)}=1-d^{(0)}$ which is also a positive contraction with spectrum $[0,1]$. By Lemma \ref{LargeEmbed}, there is a sequence $(\widetilde{\lambda}_n)_{n=1}^\infty$ of c.p.c.~ order zero maps $U\rightarrow \mathcal{Z}$ inducing a c.p.c.~ order zero map $\widetilde{\Lambda}:U_\omega\rightarrow\Z_\omega$ such that $1_{\Z_\omega}-\widetilde{\Lambda}(1_{U_\omega}-e)\in J_{\Z, \tau_{\Z}}$.  Then consider the c.p.c.~ order zero maps $\widetilde{\lambda}_n^{(j)}:U \rightarrow \Z\otimes \Z$ given by $\widetilde{\lambda}_n^{(j)}(x)=\widetilde{\lambda}_n(x)\otimes d^{(j)}$, which induce c.p.c.~ order zero maps $\widetilde{\Lambda}^{(j)}:U_\omega\rightarrow\Z_\omega \otimes \Z \subset (\Z \otimes \Z)_{\omega}$ with $\widetilde{\Lambda}^{(j)}(1_{U_\omega}-e)=(\widetilde{\Lambda}(1_{U_\omega}-e)\otimes d^{(j)})$.  By Lemma \ref{ShuffleLemma}, there exist unitaries $u^{(j)}\in (\Z\otimes \Z)_\omega$, with $u^{(j)}(\widetilde{\Lambda}^{(j)}(1_{U_\omega}-e))u^{(j)}{}^*=(1_{\Z_\omega}\otimes d^{(j)})$, which we can lift to a sequence $(u^{(j)}_n)_{n=1}^\infty$ of unitaries in $\Z\otimes\Z$.   Define c.p.c.~ order zero maps $\lambda_n^{(j)}=\mathrm{ad}(u_n^{(j)})\circ\widetilde{\lambda}_n^{(j)}:U\rightarrow \Z\otimes\Z$, with induced map $\Lambda^{(j)}$ so that $\sum_{j=0}^1\Lambda^{(j)}(1_{U_\omega}-e)=\sum_{j=0}^1(1_{\Z_\omega}\otimes d^{(j)})=1_{\Z_\omega}\otimes 1_\Z = 1_{(\Z \otimes \Z)_{\omega}}$.    Identifying $\Z\otimes\Z$ with $\Z$, the lemma is established.
\end{proof}

The next lemma packages a factorisation for the order zero maps obtained from the $2$-coloured approximately inner flip (cf.\ Proposition \ref{FlipFollowup}). 

\begin{lemma}\label{Factorise}
Let $A$ and $B$ be separable unital $\mathrm{C}^*$-algebras with $B$ simple and stably finite and let $U$ and $V$ be UHF algebras. Given a contraction $v\in (B\otimes A\otimes V)_\omega$ with $v^*v\in (1_B\otimes 1_A\otimes V)_\omega$, let $(\psi_n)_{n=1}^\infty$ be a sequence of c.p.c.~ maps $A\rightarrow U$, such that the c.p.c.~ map $\Psi:(B\otimes A\otimes V)_\omega\rightarrow (B\otimes U\otimes V)_\omega$ induced by the sequence $(\id_B\otimes \psi_n\otimes\id_V)_{n=1}^\infty$ is order zero on $\mathrm{C}^*(B\otimes A\otimes V,v)\subset (B\otimes A\otimes V)_\omega$.  Then the c.p.c.~ map $\Theta:A\rightarrow (B\otimes U\otimes V)_\omega$ given by $\Theta(a)=\Psi(v(1_B\otimes a\otimes 1_V)v^*)$ is order zero and can be factorised as follows: there are finite dimensional $\mathrm{C}^{*}$-algebras $G_n$, c.p.c.~ maps $\phi_n:A\rightarrow G_n$, and $^*$-homomorphisms $\theta_n:G_n\rightarrow B\otimes U\otimes V$ such that $\Theta(a)$ is represented by $(\theta_n(\phi_n(a)))_{n=1}^\infty$ for $a\in A$.
\end{lemma}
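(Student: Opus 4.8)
The plan is to treat the two assertions separately: first that $\Theta$ is order zero, and then the finite-dimensional factorisation.

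For the order zero statement, write $\iota\colon A\to (B\otimes A\otimes V)_\omega$ for the unital inclusion $\iota(a)=1_B\otimes a\otimes 1_V$ and consider $\alpha\colon A\to D:=\mathrm{C}^*(B\otimes A\otimes V,v)$ given by $\alpha(a)=v\iota(a)v^*$, which is c.p.c.\ as $v$ is a contraction. The hypothesis $v^*v\in (1_B\otimes 1_A\otimes V)_\omega$ says that $v^*v$ commutes with $\iota(A)$ (the $A$- and $V$-tensor-factors commute), so whenever $a,b\in A_+$ satisfy $ab=0$ we get $\alpha(a)\alpha(b)=v\iota(a)(v^*v)\iota(b)v^*=v\iota(ab)(v^*v)v^*=0$; thus $\alpha$ is order zero with range in $D$. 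Since $\Psi|_D$ is order zero by hypothesis and order zero maps preserve orthogonality, the composite $\Theta=\Psi\circ\alpha$ preserves orthogonality and is therefore c.p.c.\ order zero.

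For the factorisation, lift $v$ to a sequence of contractions $(v_n)$ in $B\otimes A\otimes V$, so that $\Theta$ is represented by $\Theta_n(a)=(\id_B\otimes\psi_n\otimes\id_V)(v_n\iota(a)v_n^*)$. I would first apply Lemma \ref{SequenceStructureTheorem} to the order zero map $\Psi|_D$, using the tensor refinement with $C=B\otimes V$ and $\widehat\psi_n=\psi_n$, to obtain supporting c.p.c.\ maps of product form $\id_B\otimes\widehat\pi_n\otimes\id_V$ whose induced order zero map carries a supporting $^*$-homomorphism $\bar\Psi$ on $D$. This lets me rewrite $\Theta(a)=\bar\Psi(v)\,(1_B\otimes\bar\sigma(a)\otimes 1_V)\,\bar\Psi(v)^*\,\Psi(1_D)$, where $\bar\sigma$ carries the entire $a$-dependence $^*$-homomorphically inside the $U$-factor, while the unit correction $\Psi(1_D)$ is a fixed positive element of that factor. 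The crucial structural feature is that $\bar\Psi(v)^*\bar\Psi(v)=\bar\Psi(v^*v)$ still lies in the $V$-factor and hence commutes with the $U$-factor. The finite-dimensionality is then supplied by the UHF algebras $U$ and $V$: fixing increasing matrix subalgebras with trace-preserving conditional expectations, the data $\bar\sigma(a)\Psi(1_D)$ and $v^*v$ can be compressed into $M_{d_n}\subseteq U$ and $M_{e_n}\subseteq V$ with error vanishing along $\omega$ on a fixed countable dense subset of $A$. With $W_n$ the partial isometry from a polar decomposition $\bar\Psi(v)=W_n|\bar\Psi(v)|$ whose source projection $q_n=W_n^*W_n$ sits in a matrix subalgebra of the $V$-factor, I would set $G_n:=M_{d_n}\otimes q_nM_{e_n}q_n$ (finite-dimensional, with unit $1_{d_n}\otimes q_n$), $\theta_n(g):=W_n(1_B\otimes g)W_n^*$, and let $\phi_n$ be the c.p.c.\ map compressing $1_B\otimes\bar\sigma(a)\Psi(1_D)\otimes v^*v$ into $G_n$. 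A direct computation using $q_nW_n^*=W_n^*$ and the fact that $q_n$ is the unit of $q_nM_{e_n}q_n$ then shows that $\theta_n$ is multiplicative, i.e.\ a genuine $^*$-homomorphism, and that $\theta_n(\phi_n(a))\to\Theta_n(a)$.

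The main obstacle is exactly the construction of the honest $^*$-homomorphism $\theta_n$, that is, producing the genuine partial isometry $W_n$ with source projection inside the $V$-factor; this is what allows the conjugate $W_n(1_B\otimes M_{d_n}\otimes q_nM_{e_n}q_n)W_n^*$ to be a finite-dimensional $^*$-subalgebra carrying the genuine (possibly projectionless) $B$-content of $\Theta$. Here the hypotheses $v^*v\in (1_B\otimes 1_A\otimes V)_\omega$ and $V$ UHF do the essential work: because $\bar\Psi(v^*v)$ lies in the UHF factor $V$ (stable rank one and real rank zero), a small spectral perturbation makes its support projection $q_n$ available inside a matrix subalgebra, and the inverse of $|\bar\Psi(v)|$ on its support is then formed inside that finite-dimensional algebra, yielding $W_n$ with $W_n^*W_n=q_n$. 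Crucially this avoids assuming stable rank one for $B\otimes U\otimes V$ itself (cf.\ Lemma \ref{UnitaryPolar}), which is why the statement needs only that $B$ be simple and stably finite. A standard reindexing over the dense subset of $A$ then assembles the maps $\phi_n,\theta_n$ and completes the proof.
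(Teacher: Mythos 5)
Your first paragraph is fine and coincides with the paper's argument: since $v^*v$ lies in $(1_B\otimes 1_A\otimes V)_\omega$ it commutes with $1_B\otimes A\otimes 1_V$, so $a\mapsto v(1_B\otimes a\otimes 1_V)v^*$ is order zero, and composing with $\Psi$ (order zero on $\mathrm{C}^*(B\otimes A\otimes V,v)$) gives that $\Theta$ is order zero. The factorisation, however, has genuine gaps. The central one is that your displayed formula $\Theta(a)=\bar\Psi(v)(1_B\otimes\bar\sigma(a)\otimes 1_V)\bar\Psi(v)^*\Psi(1_D)$ is built from the supporting $^*$-homomorphism, whose values live in $\mathcal M(\mathrm{C}^*(\Psi(X)))\subseteq((B\otimes U\otimes V)_\omega)^{**}$ and \emph{not} in the ultrapower; such elements cannot be lifted to sequences in $B\otimes U\otimes V$, so all your subsequent compressions into matrix subalgebras and polar decompositions are being performed on objects that are not representable by sequences. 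Avoiding exactly this is the point of Lemma \ref{SequenceStructureTheorem}: you correctly invoke it, but then pass back to the supporting $^*$-homomorphism, undoing its benefit. The fix is to keep everything inside the ultrapower by distributing the unit correction symmetrically, writing $\Theta(a)=\Pi^{1/3}(v)\,\Psi(1)^{1/2}\,\Pi^{1/3}(1_B\otimes a\otimes 1_V)\,\Psi(1)^{1/2}\,\Pi^{1/3}(v^*)$ via the order zero functional calculus (Lemma \ref{SequenceFunctionalCalculus}), where $s=\Pi^{1/3}(v)$ and $|s|=\Pi^{1/3}((v^*v)^{1/2})$ \emph{do} lift to concrete sequences $t_n\in 1_B\otimes E_n\otimes F_n$. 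Note in particular that these lifts involve the $U$-factor through $E_n=\mathrm{C}^*(\psi_n(A))$, so your claim that the source projection of $s$ sits in a matrix subalgebra of the $V$-factor alone is not correct.

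The second problem is your treatment of the polar decomposition, and with it your reading of the hypotheses. You assert that producing a partial isometry $W_n$ by functional calculus ``avoids assuming stable rank one for $B\otimes U\otimes V$, which is why the statement needs only that $B$ be simple and stably finite.'' This is backwards: simplicity and stable finiteness of $B$ are assumed precisely \emph{because} they yield stable rank one of $B\otimes U\otimes V$ (R{\o}rdam, \cite[Theorem 5.7]{R:JFA}), whence Lemma \ref{UnitaryPolar} gives a genuine \emph{unitary} $u\in(B\otimes U\otimes V)_\omega$ with $s=u|s|$; lifting $u$ to unitaries $u_n$ makes $\theta_n=\mathrm{ad}(u_n)$ an exact $^*$-homomorphism on all of $G_n=1_B\otimes E_n\otimes F_n$, and $(u_nt_n)_{n}$ is an exact lift of $s$, so $\Theta(a)$ is represented by $(\theta_n(\phi_n(a)))_n$ with no error terms. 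Your partial-isometry route, by contrast, only gives $W|s|=sq$ for a spectral cut-down $q=\chi_{[\eps,1]}(|s|)$, so $\theta_n(\phi_n(a))$ recovers $\Theta(a)$ only up to an $\eps$-error; moreover the naive lifts $W_n=s_ng(t_n)q_n$ satisfy $W_n^*W_n\approx q_n$ only asymptotically, so they must be repaired to exact partial isometries before $\mathrm{ad}(W_n)$ is multiplicative on $q_nG_nq_n$. None of this bookkeeping (nor the final diagonalisation over $\eps\to 0$ and a dense subset of $A$) appears in your ``direct computation'', and as written the argument does not establish that $\Theta(a)$ is represented by $(\theta_n(\phi_n(a)))_{n=1}^\infty$ with each $\theta_n$ an honest $^*$-homomorphism.
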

\begin{proof}
Since $v^*v\in (1_B\otimes 1_A\otimes V)_\omega$, the map $a\mapsto v(1_B\otimes a\otimes 1_V)v^*$ is order zero, and hence so too is $\Theta$.

By following each $\psi_n$ by a suitable conditional expectation from $U$ to a finite dimensional $\mathrm{C}^*$-subalgebra, we can assume that $\mathrm{C}^*(\psi_n(A))=E_n \subset U$ is finite dimensional for each $n$ without changing the values of the induced map $\Psi$ on the separable $\mathrm{C}^*$-algebra $X=\mathrm{C}^*(B\otimes A\otimes V,v)$.  Then Lemma \ref{SequenceStructureTheorem} gives a sequence of c.p.c.~ maps $(\pi_n)_{n=1}^\infty$, $\pi_n:A\rightarrow E_n$ such that $(\id_B\otimes\pi_n\otimes\id_V)_{n=1}^\infty$ induces a c.p.c.~ order zero map $\Pi:X\rightarrow (B\otimes U\otimes V)_\omega$ satisfying
\begin{equation}
\Psi(x)=\Pi(x)\Psi(1_{(B\otimes A\otimes V)_\omega})=\Psi(1_{(B\otimes A\otimes V)_\omega})\Pi(x),\quad x\in X.
\end{equation}
In particular
\begin{align}
\Theta(a)&=\Psi(v(1_B\otimes a\otimes 1_V)v^*)=\Pi(v(1_B\otimes a\otimes 1_V)v^*)\Psi(1_{(B\otimes A\otimes V)_\omega})\nonumber\\&=\Pi^{1/3}(v)\Psi(1)^{1/2}\Pi^{1/3}(1_B\otimes a\otimes 1_V)\Psi(1)^{1/2}\Pi^{1/3}(v^*),\quad a\in A.\label{Factorisation.1}
\end{align}

Write $s=\Pi^{1/3}(v)$.  By Lemma \ref{SequenceFunctionalCalculus}, $\Pi^{1/3}$ is induced by a sequence of c.p.c.~ maps of the form $(\id_B\otimes\widetilde{\pi}_n\otimes\id_V)_{n=1}^\infty$, where $\widetilde{\pi}_n:A\rightarrow E_n$.  Since $v^*v\in(1_B\otimes 1_A\otimes V)_\omega$,  we can lift $|s|=(\Pi^{1/3}(v^{*}) \Pi^{1/3}(v))^{1/2} = \Pi^{1/3}((v^*v)^{1/2})$ to a sequence $(t_n)_{n=1}^\infty$ of positive contractions in $1_B\otimes E_n\otimes V$.  Since $V$ is AF, by making a small perturbation of these lifts, we can assume that there are finite dimensional $\mathrm{C}^*$-subalgebras $F_n\subseteq V$ with $1_V\in F_n$ such that  $t_n\in 1_B\otimes E_n\otimes F_n$ for all $n$.  As $B$ is simple and stably finite,  $B\otimes U\otimes V$ has stable rank one by \cite[Theorem 5.7]{R:JFA}, so Lemma \ref{UnitaryPolar} provides a unitary $u\in (B\otimes U\otimes V)_\omega$ with $s=u|s|$. Lift $u$ to a sequence of unitaries $(u_n)_{n=1}^\infty$ in $B\otimes U\otimes V$, then $(u_nt_n)_{n=1}^\infty$ is a lift of $s$.  

Define $G_n=1_B\otimes E_n\otimes F_n$ and c.p.c.~ maps $\phi_n:A\rightarrow G_n$ by 
\begin{equation}\label{Factorisation.2}
\phi_n(a)=t_n(1_B\otimes \psi_n(1_A)^{1/2}\widetilde{\pi}_n(a)\psi_n(1_A)^{1/2}\otimes 1_V)t_n,\quad a\in A,
\end{equation}
noting that the choices have been made so that $\phi_n(A)\subseteq G_n$.  Let $\theta_n:G_n\rightarrow B\otimes U\otimes V$ be the $^*$-homomorphism $\theta_n=\mathrm{ad}(u_n)$. As $(u_nt_n)_{n=1}^\infty$ is a lift of $\Pi^{1/3}(v)$, (\ref{Factorisation.1}) and (\ref{Factorisation.2}) show that for every $a\in A$, $(\theta_n(\phi_n(a)))_{n=1}^\infty$ is a lift of $\Theta(a)$, as required.
\end{proof}

All the pieces are now in place to prove our main result.  As explained in \cite[Propositions 2.4 and 2.5]{TW:APDE}, when $A$ is a $\Z$-stable $\mathrm{C}^*$-algebra, $\dimnuc(A)$ can be computed by only approximating the first factor embedding $\iota:A\hookrightarrow (A\otimes\Z)_\omega$; this is  used at the end of the proof.

\begin{proof}[Proof of Theorem \ref{Main}]
Fix UHF algebras $U$ and $V$. Since $A$ has unique tracial state, we can apply Lemma \ref{FlipLemma} to find contractions $v^{(0)},v^{(1)}\in (A\otimes A\otimes V)_\omega$ satisfying (\ref{FlipLem.1}) and (\ref{FlipLem.2}).   By Proposition \ref{EmbeddingCone} we can find an order zero map $\widetilde{\Psi}_2:A\rightarrow U_\omega$ with $1_{U_\omega}-\widetilde{\Psi}_{2}(1_A)\in J_{U,\tau_{U}}$, which, by the Choi-Effros lifting theorem of \cite{CE:Ann}, is induced by a sequence $(\widetilde{\psi}_n)_{n=1}^\infty$ of c.p.c.~ maps $A\rightarrow U$.  Letting $(x^{(l)})_{l=1}^\infty$ be a countable dense sequence in $\mathrm{C}^*(A\otimes A\otimes V,v^{(0)},v^{(1)}) \subset (A \otimes A \otimes V)_{\omega}$, with fixed isometric lifts $(x_n^{(l)})_{n=1}^\infty$, we can find a sequence $(m_n)_{n=1}^\infty$ of natural numbers such that 
\begin{align}
&\|(\id_A\otimes\widetilde{\psi}_{m_n}\otimes\id_V)(x^{(k)}_nx^{(l)}_n)(\id_A\otimes\widetilde{\psi}_{m_n}\otimes\id_V)(1_A\otimes 1_A\otimes 1_V)\\
&\quad-(\id_A\otimes\widetilde{\psi}_{m_n}\otimes\id_V)(x^{(k)}_n)(\id_A\otimes\widetilde{\psi}_{m_n}\otimes\id_V)(x^{(l)}_n)\|\leq\frac{1}{n},\quad 1\leq k,l\leq n,\nonumber
\end{align}
and $\tau_{U}(\psi_{m_n}(1_A))\geq 1- 1/n$ for each $n\in\mathbb N$.  Let $\psi_n=\widetilde{\psi}_{m_n}$ and let $\Psi:(A\otimes A\otimes V)_\omega\rightarrow (A\otimes U\otimes V)_\omega$ be the c.p.c.~ map induced by the sequence $(\id_A\otimes \psi_n\otimes\id_V)_{n=1}^\infty$ which is constructed to be order zero on $\mathrm{C}^*(A\otimes A\otimes V,v^{(0)},v^{(1)})$. Write $\Psi_2:A\rightarrow U_\omega$ for the c.p.c.~ order zero map induced by $(\psi_n)_{n=1}^\infty$ and note that $1_{U_\omega}-\Psi_2(1_A)\in J_{U,\tau_{U}}$.

Embedding $U_\omega$ into $(U\otimes V)_\omega$ as $U_\omega \otimes 1_V$, we have
\begin{equation}
1_{(U\otimes V)_\omega}-\Psi_2(1_A)\otimes 1_V=(1_{U_\omega}-\Psi_2(1_A))\otimes 1_V\in J_{U\otimes V, \tau_{U} \otimes \tau_{V}}.
\end{equation}
As $U\otimes V$ is UHF, Lemma \ref{2OrderZeroMaps} provides two sequences $(\widetilde{\lambda}^{(j)}_n)_{n=1}^\infty$, $j=0,1$, of c.p.c.~ order zero maps $U\otimes V\rightarrow\Z$ such that the induced order zero maps $\widetilde{\Lambda}^{(j)}:(U\otimes V)_\omega\rightarrow\Z_\omega$ satisfy $\sum_{j=0}^1\widetilde{\Lambda}^{(j)}(\Psi_2(1_A)\otimes 1_V)=1_{\Z_\omega}$.  For each $n$ and $j$, let $\lambda_n^{(j)}=\id_A\otimes\widetilde{\lambda}^{(j)}_n:A\otimes U\otimes V\rightarrow A\otimes\Z$, which is c.p.c.~ and order zero, and so the two sequences $(\lambda_n^{(j)})_{n=1}^\infty$ induce c.p.c.~ order zero maps $\Lambda^{(j)}:(A\otimes U\otimes V)_\omega\rightarrow (A\otimes \Z)_\omega$ with 
\begin{equation}\label{Main.1}
\sum_{j=0}^1\Lambda^{(j)}(a\otimes\Psi_2(1_A)\otimes 1_V)=a\otimes 1_{\Z_\omega},\quad a\in A.
\end{equation}
Let $\Theta^{(i)}:A\rightarrow (A\otimes U\otimes V)_\omega$, $i=0,1$, be the c.p.c.~ order zero map given by $\Theta^{(i)}(a)=\Psi(v^{(i)}(1_A\otimes a\otimes 1_V)v^{(i)}{}^*)$. Hence, for $a\in A$,
\begin{align}
\sum_{i,j=0}^1\Lambda^{(j)}(\Theta^{(i)}(a))&=\sum_{i,j=0}^1\Lambda^{(j)}(\Psi(v^{(i)}(1_A\otimes a\otimes 1_V)v^{(i)}{}^*))\nonumber\\
&=\sum_{j=0}^1\Lambda^{(j)}(\Psi(a\otimes 1_A\otimes 1_V))\nonumber\\
&=\sum_{j=0}^1\Lambda^{(j)}(a\otimes {\Psi}_2(1_A)\otimes 1_V)=a\otimes 1_{\Z},\label{Main.2}
\end{align}
using (\ref{FlipLem.1}) for the second equality, and (\ref{Main.1}) for the last.

We can factorise the order zero map $\Theta^{(i)}$ by Lemma \ref{Factorise} (with $A$ in place of $B$), so there are finite dimensional $\mathrm{C}^{*}$-algebras $G^{(i)}_n$, c.p.c.~ maps $\phi_n^{(i)}:A\rightarrow G_n^{(i)}$ and $^*$-homomorphisms $\theta_n^{(i)}:G_n^{(i)}\rightarrow (A\otimes U\otimes V)$ such that for each $a\in A$, $\Theta^{(i)}(a)$ is represented by the sequence $(\theta_n^{(i)}(\phi_n^{(i)}(a)))_{n=1}^\infty$. This shows that the first factor embedding $A \rightarrow A \otimes \Z$ has nuclear dimension at most $3$ (in the sense of \cite[Definition 2.2]{TW:APDE}), from which it follows that $\dimnuc(A)\leq 3$ by \cite[Propositions 2.5 and 2.6]{TW:APDE}. For completeness, we give the details here. As $A$ is $\Z$-stable, and $\Z$ is strongly self-absorbing, there is a sequence $(\sigma_n)_{n=1}^\infty$ of $^*$-isomorphisms $\sigma_n:A\otimes\Z\rightarrow A$ such that $\sigma_n(a\otimes 1_\Z)\rightarrow a$ for all $a\in A$ (see for example \cite[Theorem 2.2]{TW:TAMS}).  Then the diagram
\begin{equation}\label{Main.3}
\xymatrix{A\ar[dr]_{\bigoplus_{i,j=0}^1\phi_n^{(i)}\quad}\ar[rr]^{\id_A}&&A\\&{\bigoplus_{i,j=0}^1G^{(i)}_n}\ar[ur]_{\quad \sum_{i,j=0}^1\sigma_n\circ\lambda^{(j)}_n\circ\theta_n^{(i)}}}
\end{equation}
is point-norm asymptotically commutative as $n\rightarrow\omega$, showing $\dimnuc(A)\leq 3$.
\end{proof}

When $A$ has approximately inner flip we can use this in place of Lemma \ref{FlipLemma} and reduce the bound on the nuclear dimension.  In the proof below the factor $V$ is unnecessary and is included only so that we can quote our earlier results without notational modification; a formally simpler proof can be given just working with $A\otimes A$.

\begin{theorem}\label{ApproxFlipDim1}
Let $A$ be a separable, unital and stably finite $\mathrm{C}^*$-algebra, which has approximately inner flip and is $\Z$-stable.  Then $\dimnuc(A)\leq 1$. \end{theorem}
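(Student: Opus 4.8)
The plan is to rerun the proof of Theorem~\ref{Main} essentially verbatim, replacing the two-contraction, $2$-coloured flip of Lemma~\ref{FlipLemma} by the genuine (one-coloured) flip supplied by the hypothesis. An approximately inner flip on $A$ produces unitaries $u_n\in A\otimes A$ with $\|u_n(a\otimes b)u_n^*-b\otimes a\|\to 0$, hence a unitary $u\in(A\otimes A)_\omega$ with $u(a\otimes b)u^*=b\otimes a$ for all $a,b\in A$. Setting $v=u\otimes 1_V\in(A\otimes A\otimes V)_\omega$ for an auxiliary UHF algebra $V$ gives a single unitary with $v(a\otimes b\otimes 1_V)v^*=b\otimes a\otimes 1_V$ and, trivially, $v^*v=1_{A\otimes A\otimes V}\in(1_A\otimes 1_A\otimes V)_\omega$. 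This $v$ plays the role of the pair $v^{(0)},v^{(1)}$; since there is now only one contraction, the index $i\in\{0,1\}$ that appears throughout the proof of Theorem~\ref{Main} disappears, and the final colour count drops from $4$ to $2$.

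First I would record that the hypotheses force $A$ to be simple, nuclear and monotracial: having an approximately inner flip already gives simplicity, nuclearity and at most one trace by the Effros--Rosenberg results recalled in Section~\ref{IFlip}, while simplicity, stable finiteness and unitality (with exactness coming from nuclearity) provide a tracial state, necessarily unique and hence extremal. As $A\cong A\otimes\Z$ is infinite dimensional it is non-elementary, so Proposition~\ref{EmbeddingCone} (with $B$ a UHF algebra $U$) furnishes a c.p.c.\ order zero map $\Psi_2:A\to U_\omega$ with $1_{U_\omega}-\Psi_2(1_A)\in J_{U,\tau_U}$, lifted by a sequence $(\psi_n)_{n=1}^\infty$ chosen asymptotically order zero on the separable subalgebra $\mathrm{C}^*(A\otimes A\otimes V,v)$. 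Exactly as in Theorem~\ref{Main}, $U\otimes V$ is UHF and $(1_{U_\omega}-\Psi_2(1_A))\otimes 1_V\in J_{U\otimes V,\tau_U\otimes\tau_V}$, so Lemma~\ref{2OrderZeroMaps} yields order zero maps $\widetilde\Lambda^{(0)},\widetilde\Lambda^{(1)}:(U\otimes V)_\omega\to\Z_\omega$ with $\sum_{j=0}^1\widetilde\Lambda^{(j)}(\Psi_2(1_A)\otimes 1_V)=1_{\Z_\omega}$; amplifying by $\id_A$ produces $\Lambda^{(j)}:(A\otimes U\otimes V)_\omega\to(A\otimes\Z)_\omega$ with $\sum_{j=0}^1\Lambda^{(j)}(a\otimes\Psi_2(1_A)\otimes 1_V)=a\otimes 1_{\Z_\omega}$.

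The essential simplification appears in the single order zero map $\Theta:A\to(A\otimes U\otimes V)_\omega$, $\Theta(a)=\Psi(v(1_A\otimes a\otimes 1_V)v^*)$, with $\Psi$ induced by $(\id_A\otimes\psi_n\otimes\id_V)_{n=1}^\infty$. Using the flip relation $v(1_A\otimes a\otimes 1_V)v^*=a\otimes 1_A\otimes 1_V$ and then $\Psi(a\otimes 1_A\otimes 1_V)=a\otimes\Psi_2(1_A)\otimes 1_V$, I would obtain
\begin{equation}
\sum_{j=0}^1\Lambda^{(j)}(\Theta(a))=\sum_{j=0}^1\Lambda^{(j)}(a\otimes\Psi_2(1_A)\otimes 1_V)=a\otimes 1_{\Z},\quad a\in A.
\end{equation}
Lemma~\ref{Factorise} (with $B=A$, which is simple and stably finite, and using $v^*v\in(1_A\otimes 1_A\otimes V)_\omega$) then factorises $\Theta$ through finite dimensional $\mathrm{C}^*$-algebras $G_n$ via c.p.c.\ maps $\phi_n:A\to G_n$ and $^*$-homomorphisms $\theta_n:G_n\to A\otimes U\otimes V$, so that $\Theta(a)$ is represented by $(\theta_n(\phi_n(a)))_{n=1}^\infty$. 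Finally, composing with $^*$-isomorphisms $\sigma_n:A\otimes\Z\to A$ implementing $\Z$-stability, the system with down map $\bigoplus_{j=0}^1\phi_n$ into $\bigoplus_{j=0}^1 G_n$ and up map $\sum_{j=0}^1\sigma_n\circ\lambda^{(j)}_n\circ\theta_n$ is point-norm asymptotically commutative, each of its two blocks being order zero; hence $\dimnuc(A)\leq 1$. The only place this genuinely differs from Theorem~\ref{Main} is that the honest flip contributes a single colour rather than two, so that just the two colours of Lemma~\ref{2OrderZeroMaps} survive; I expect the sole minor obstacle to be the routine bookkeeping of the inert $V$-factor, retained purely so that Lemmas~\ref{2OrderZeroMaps} and~\ref{Factorise} can be quoted unchanged, exactly as flagged before the statement.
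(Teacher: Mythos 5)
Your proposal is correct and follows essentially the same route as the paper, which likewise reruns the proof of Theorem~\ref{Main} with $v^{(0)}_n=u_n\otimes 1_V$ and $v^{(1)}_n=0$, so that the maps with $i=1$ in (\ref{Main.2}) vanish and only the two colours from Lemma~\ref{2OrderZeroMaps} survive. Your preliminary observation that the hypotheses force $A$ to be simple, nuclear, non-elementary and monotracial (so that Proposition~\ref{EmbeddingCone} applies) is a point the paper leaves implicit, and is correctly handled.
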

\begin{proof}
As $A$ has approximately inner flip it is simple and nuclear \cite{ER:PJM}.  Let $(u_n)_{n=1}^\infty$ be a sequence of unitaries in $A\otimes A$ witnessing the approximately inner flip. We can then follow the proof of Theorem \ref{Main} with $v_n^{(0)}=u_n\otimes 1_V$ and $v_n^{(1)}=0$, to obtain a factorisation (\ref{Main.3}) consisting of two maps (the maps with $i=1$ in (\ref{Main.2}) vanish), so that $\dimnuc(A)\leq 1$.
\end{proof}

In particular the previous result applies to the class of strongly self-absorbing algebras formalised in \cite{TW:TAMS}.
\begin{corollary}
Let $A$ be a stably finite strongly self-absorbing $\mathrm{C}^*$-algebra. Then $\dimnuc(A)\leq 1$.
\end{corollary}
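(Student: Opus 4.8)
The plan is simply to verify that $A$ satisfies the four hypotheses of Theorem \ref{ApproxFlipDim1} and then invoke that theorem. By definition a strongly self-absorbing $\mathrm{C}^*$-algebra is separable and unital, and stable finiteness is assumed outright, so two of the conditions are immediate. It therefore remains to check that $A$ has approximately inner flip and is $\Z$-stable.

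For the approximately inner flip I would argue straight from the definition: a strongly self-absorbing algebra comes equipped with an isomorphism $A\rightarrow A\otimes A$ that is approximately unitarily equivalent to the first-factor embedding $\iota:A\rightarrow A\otimes A$. Composing such an isomorphism with the flip automorphism and comparing the result with $\iota$ exhibits the flip on $A\otimes A$ as a point-norm limit of inner automorphisms; this is exactly the content recorded in \cite{TW:TAMS}. Hence $A$ has approximately inner flip (as already flagged in the introduction), and in particular is simple, nuclear and has at most one trace by \cite{ER:PJM}. For $\Z$-stability I would appeal to Winter's theorem that $\Z$ is the initial object in the category of strongly self-absorbing $\mathrm{C}^*$-algebras, so that every such algebra absorbs the Jiang--Su algebra tensorially \cite{W:JNCG}; thus $A\cong A\otimes\Z$.

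With all four hypotheses established, Theorem \ref{ApproxFlipDim1} gives $\dimnuc(A)\leq 1$. There is no genuine obstacle in this argument, since the corollary is a direct specialisation of Theorem \ref{ApproxFlipDim1}; the only substantive inputs are the two structural facts above. Of these, the approximately inner flip is essentially forced by the definition of strong self-absorption, whereas the $\Z$-stability of an \emph{arbitrary} strongly self-absorbing algebra is the one nontrivial external result \cite{W:JNCG}, and it is that theorem (rather than anything proved here) which carries the weight.
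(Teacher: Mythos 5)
Your proposal is correct and follows exactly the paper's own route: separability and the approximately inner flip come (essentially) from the definition of strong self-absorption via \cite[Proposition 1.5]{TW:TAMS}, $\Z$-stability from \cite{W:JNCG}, and the conclusion then follows from Theorem \ref{ApproxFlipDim1}. No issues.
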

\begin{proof}
Strongly self-absorbing $\mathrm{C}^*$-algebras are separable by definition \cite{TW:TAMS}, have approximately inner flip essentially by definition \cite[Proposition 1.5]{TW:TAMS}, and are $\Z$-stable by \cite{W:JNCG}. Thus Theorem \ref{ApproxFlipDim1} applies.
\end{proof}

\subsection*{Acknowledgments}

Part of the research for this paper was undertaken at the Oberwolfach
workshop \emph{$\mathrm{C}^*$-Algebren} and at the conference \emph{Classifying
Structures for Operator Algebras and Dynamical Systems} at the
University of Aberystwyth. The authors thank the organisers and
funding bodies of these meetings for the conducive research
environments. The paper was completed during a visit of the first and
second named authors to M\"u{}nster. These authors thank the third
named author, and the Mathematics Institute at M\"u{}nster, for their
hospitality.
Finally, all authors thank the referees for their helpful comments and suggestions.

\end{document}